\date{}
\theoremstyle{plain}
\newtheorem{theo} {Theorem}[section]
\newtheorem{lem}  [theo]{Lemma}
\newtheorem{prop} [theo]{Proposition}
\newtheorem{conj} [theo]{Conjecture}
\theoremstyle{definition}
\newtheorem{defi}  [theo]{Definition}
\newtheorem{rem}   [theo]{Remark}
\newtheorem{expl}  [theo]{Example}
\newtheorem{constr}[theo]{Construction}
\newtheorem{nota}  [theo]{Notation}
\newtheorem{outl}  [theo]{Outlook}
\newenvironment{addr}{ %
~\\[-20px]
\begingroup
\small
\begin{flushright}
}{
\end{flushright}
\endgroup
}
\newcommand{\auth}[4]{ %
\begin{minipage}[t]{#1}
\small
{\textbf{#2}}\\[3px]
#3\\[3px]
\texttt{\href{mailto:#4}{#4}}
\end{minipage}
}
\def\blfootnote{\gdef\@thefnmark{}\@footnotetext}
\let\setminus\smallsetminus
\let\le\leqslant
\let\ge\geqslant
\let\leq\leqslant
\let\geq\geqslant
\let\emptyset\varnothing
\let\phi\varphi
\let\epsilon\varepsilon
\let\theta\vartheta
\newcommand{\ol}     [1]{\overline{#1}}
\newcommand{\on}     [1]{\text{\textup{#1}}}
\newcommand{\quot}   [2]{\left.\raise0.5ex\hbox{$#1$} \right/\hspace*{-2px}\lower0.5ex\hbox{$#2$}}
\newcommand{\llangle}[1]{\mathopen{}\left\langle #1 \right\rangle\mathclose{}}
\newcommand{\pa}[1]{\mathopen{}\left(#1\right)\mathclose{}}     %
\newcommand{\abs}[1]{\lvert #1\rvert}                           %
\newcommand{\set}[1]{\mathopen{}\left\{#1\right\}\mathclose{}}  %
\newcommand{\Th}[1]{{\bfseries\small #1}}                       %
\newcommand{\acr} [1]{\texorpdfstring{\textsmaller{#1}}{#1}}    %
\newcommand{\N}{\mathbb{N}}      %
\newcommand{\R}{\mathbb{R}}      %
\newcommand{\Z}{\mathbb{Z}}      %
\newcommand{\cpt}{^{\infty}}     %
\newcommand{\V}{\mathscr{V}}     %
\newcommand{\fF}{\mathfrak{F}}   %
\newcommand{\pr}{\on{pr}}        %
\newcommand{\Q}{\caQ}            %
\newcommand{\bbP}{\mathbb{P}}
\newcommand{\bE}{\mathbb{E}}
\newcommand{\caO}{\mathcal{O}}
\newcommand{\caP}{\mathcal{P}}
\newcommand{\caQ}{\mathcal{Q}}
\newcommand{\scC}{\mathscr{C}}
\bfq\renewcommand{\bfq}{\mathbf{q}}\else \newcommand{\bfq}{\mathbf{q}}\fi
\newcommand{\frF}{\mathfrak{F}}
\newcommand{\frS}{\mathfrak{S}}
\newcommand{\tV}{\tilde{V}}     %
\newcommand{\tH}{\tilde{H}}     %
\definecolor{dgreen}{rgb}{.25,.7,.3}
\definecolor{bgreen}{rgb}{.5,.9,.5}
\definecolor{dyellow}{rgb}{.85,.7,0}
\definecolor{byellow}{rgb}{.95,.93,.4}
\definecolor{dred}{rgb}{.9,.3,.3}
\definecolor{dblue}{rgb}{.3,.3,.8}
\definecolor{bred}{rgb}{.95,.6,.6}
\definecolor{bblue}{rgb}{.5,.5,.9}
\definecolor{dgrey}{rgb}{.6,.6,.6}
\definecolor{lila}{rgb}{.8,.6,.9}
\definecolor{blila}{rgb}{.94,.84,.97}
\definecolor{bgrey}{rgb}{.9,.9,.9}
\definecolor{ddgreen}{rgb}{0,.5,0}
\definecolor{grey}{rgb}{.87,.87,.87}
\newcommand{\stab}{\mathrm{stab}}
\newcommand{\sg}{\on{sg}}
\newcommand{\id}{\mathrm{id}}
\newcommand{\Id}{{\mathrm{Id}}}
\title{Vertical configuration spaces\\and their homology}
\author{Andrea Bianchi and Florian Kranhold}
\begin{document}

\maketitle

\begin{abstract}
  We introduce ordered and unordered configuration spaces of ‘clusters’ of
  points in an Euclidean space $\R^d$, where points in each cluster satisfy
  a ‘verticality’ condition, depending on a decomposition $d=p+q$. We compute
  the homology in the ordered case and prove homological stability in the
  unordered case.\par{}\vspace*{10px}
  \footnotesize \noindent\textbf{Date.} 22\textsuperscript{nd} March, 2021. \textbf{Last change.} \today.\\
  \textbf{Key words.} Configuration spaces, homological stability, clusters.\\
  \textbf{2020 \acr{MSC}.}
  Primary \texttt{55R80};      %
  secondary \texttt{55R25},    %
  \texttt{55R20},              %
  \texttt{55M99}.\vspace*{3px} %
\end{abstract}

\section{Introduction}
We fix integers $p\ge 0$ and $q\ge 1$ and let $d\coloneqq p+q$ throughout the article.
For $k\geq 1$, a \emph{cluster of size $k$} in $\R^d$ is a tuple of $k$ distinct
points of $\R^d$, i.e.\ a point in the ordered configuration space $\tilde{C}_k(\R^d)$
of $k$ points in $\R^d$. For $r\geq 0$ and a tuple $K=(k_1,\dots,k_r)$ of integers
$k_i\geq 1$, we consider the subspace
\[\tilde{C}_K(\R^d)\subseteq \prod_{i=1}^r (\R^d)^{k_i}\]
consisting of all configurations of $r$ ordered, pairwise disjoint clusters of
sizes $k_1,\dots,k_r$ in $\R^d$. Note that if we put
$\abs{K}\coloneqq k_1+\dotsb+k_r$, the space $\tilde{C}_K(\R^d)$ is, up to
reindexing, homeomorphic to the more familiar space
$\tilde{C}_{\abs{K}}(\R^d)$.

Now decompose $\R^d=\R^p\times \R^q$, and denote by $\pr_1\colon\R^d\to\R^p$ the
projection on the first $p$ coordinates. A cluster $z=(z^1,\dots,z^k)$ of $k$
points in $\R^d$ is \emph{vertical} if $\pr_1(z^1)=\dotsb=\pr_1(z^k)$, i.e.\ the
$k$ points in the cluster share their first $p$ coordinates. For $p=1$ and
$q=1$, we are requiring the $k$ points of the cluster to lie on the same
vertical line of $\R^2$, whence the terminology: see Figure \ref{fig:vtilde}.

\begin{defi}
  \label{defi:bV}
  For $r$ and $K=(k_1,\dots,k_r)$ as above, we introduce a subspace
  \[\tV_K(\R^{p,q})\subseteq \tilde{C}_K(\R^{p+q}).\]
  A sequence of clusters $(z_1,\dotsc,z_r)$ belongs to $\tV_K(\R^{p,q})$ if
  and only if each cluster $z_i=(z_{i}^1,\dots,z_i^{k_i})$ is vertical.
\end{defi}

\begin{figure}
  \centering
  \begin{tikzpicture}[scale=3]
  \draw[thin,dgrey] (0,0) rectangle (1,1);
  \node[red] at (.3,.2) {\tiny $\bullet$};
  \node[red] at (.3,.3) {\tiny $\bullet$};
  \node[blue] at (.3,.4) {\tiny $\bullet$};
  \node[red] at (.3,.6) {\tiny $\bullet$};
  \node[blue] at (.3,.8) {\tiny $\bullet$};
  \node[red] at (.22,.2) {\tiny $1,2$};
  \node[red] at (.22,.3) {\tiny $1,1$};
  \node[blue] at (.22,.4) {\tiny $3,1$};
  \node[red] at (.22,.6) {\tiny $1,3$};
  \node[blue] at (.22,.8) {\tiny $3,2$};
  \node[dgreen] at (.7,.5) {\tiny $\bullet$};
  \node[dgreen] at (.7,.7) {\tiny $\bullet$};
  \node[dgreen] at (.7,.2) {\tiny $\bullet$};
  \node[dgreen] at (.7,.3) {\tiny $\bullet$};
  \node[dgreen] at (.78,.5) {\tiny $2,1$};
  \node[dgreen] at (.78,.7) {\tiny $2,3$};
  \node[dgreen] at (.78,.2) {\tiny $2,4$};
  \node[dgreen] at (.78,.3) {\tiny $2,2$};
  \node[dyellow] at (.4,.87) {\tiny $\bullet$};
  \node[dyellow] at (.4,.35) {\tiny $\bullet$};
  \node[dyellow] at (.48,.87) {\tiny $4,1$};
  \node[dyellow] at (.48,.35) {\tiny $4,2$};
\end{tikzpicture}
  \caption{A configuration in $\tV_{(\textcolor{red}{3},\textcolor{dgreen}{4},
      \textcolor{blue}{2},\textcolor{dyellow}{2})}(\R^{1,1})$}\label{fig:vtilde}
\end{figure}

\noindent These spaces have already been studied in \cite{latifi}. Our interest
for the spaces $\tV_K(\R^{p,q})$ has the following reasons.
\begin{enumerate}
\item[\Th{1.}] There is a coloured operad $\V_{p,q}$, in spirit
  similar to the operad of little cubes and, even more closely, to the extended
  Swiss cheese operad \cite{Willwacher}: the second author has introduced this operad in his PhD thesis, and has studied the problem of delooping $\V_{p,q}$-algebras in \cite{kranhold2}. The spaces $\tV_K(\R^{p,q})$
  occur, up to a mild homotopy equivalent replacement, in the description of the
  operad $\V_{p,q}$. The first occurrence of operations related to the operad
  $\V_{p,q}$ is in \cite[§5]{boedigop}.
\item[\Th{2.}] For $n\ge0$, the cohomology of the ordered configuration
  space $\tilde{C}_n(\R^d)$ is known to be free abelian; more precisely, for
  every choice of
  \begin{itemize}
  \item $r\ge1$, and a sequence $K=(k_1,\dots,k_r)$ with $\abs{K}=n$;
  \item a partition of the set $\set{1,\dots,n}$ into pieces of sizes $k_1,\dots,k_r$,
  \end{itemize}
  we have a proper embedding
  $\tV_K(\R^{d-1,1})\hookrightarrow \tilde{C}_n(\R^d)$, and a basis for
  $H^*(\tilde{C}_n(\R^d);\Z)$ can be chosen to consist of Poincaré duals of
  components of the submanifolds $\tV_K(\R^{d-1,1})$ obtained in this way.
\item[\Th{3.}] The spaces $\tV_K(\R^{p,q})$ give an example of ordered
  configuration spaces for which the Fadell–Neuwirth maps fail, in
  general, to be fibrations.
\end{enumerate}
There is an unordered counterpart of the construction above: consider the
partition of $\set{1,\dots,\abs{K}}$ into $r$ consecutive segments of lengths
$k_1,\dots,k_r$ and denote by $\frS_K\subseteq \frS_{\abs{K}}$ the
subgroup of the symmetric group containing all permutations $\sigma$ which
preserve this partition, i.e.\ $\sigma$ maps each partition component to a
(possibly different) partition component. The group $\frS_K$ can be
described as follows: for all $k\ge1$ we denote by $r(k)\ge0$ the number of
occurrences of $k$ in $K$; then $\frS_K$ is isomorphic to the product
\[\frS_K\cong \prod_{k=1}^\infty\frS_k\wr\frS_{r(k)}
  = \prod_{k=1}^\infty(\frS_k)^{r(k)}\rtimes\frS_{r(k)}.\]

\begin{defi}\label{defi:unorderedbV}
  The group $\frS_K$ acts freely on $\tV_K(\R^{p,q})$ by permuting the labels
  $1\leq i\leq r$ of clusters of the same size, and permuting the labels
  $1\leq j\leq k_i$ of the points of each cluster; we denote the quotient space by
  \[V_K(\R^{p,q})\coloneqq \tV_K(\R^{p,q})/\frS_K.\]
\end{defi}

\begin{figure}
  \centering
  \begin{tikzpicture}[scale=3]
  \draw[thin,dgrey] (0,0) rectangle (1,1);
  \draw[dgrey,thick] (.7,.7) -- (.7,.2);
  \draw[dgrey,thick] (.4,.87) -- (.4,.35);
  \draw[dgrey,thick] (.3,.8) -- (.26,.8) -- (.26,.4) -- (.3,.4);
  \draw[dgrey,thick] (.3,.6) -- (.34,.6) -- (.34,.2) -- (.3,.2);
  \draw[dgrey,thick] (.3,.3) -- (.34,.3);
  \node at (.3,.2) {\tiny $\bullet$};
  \node at (.3,.3) {\tiny $\bullet$};
  \node at (.3,.4) {\tiny $\bullet$};
  \node at (.3,.6) {\tiny $\bullet$};
  \node at (.3,.8) {\tiny $\bullet$};
  \node at (.7,.5) {\tiny $\bullet$};
  \node at (.7,.7) {\tiny $\bullet$};
  \node at (.7,.2) {\tiny $\bullet$};
  \node at (.7,.3) {\tiny $\bullet$};
  \node at (.4,.87) {\tiny $\bullet$};
  \node at (.4,.35) {\tiny $\bullet$};
\end{tikzpicture}
  \caption{A configuration in $V_{(3,4,2,2)}(\R^{1,1})$}\label{fig:vconf}
\end{figure}

Roughly speaking, and using the notation above, a point in $V_K(\R^{p,q})$
consists of a collection of $r$ clusters, of which $r(k)$ have size $k$;
clusters of the same size are unordered, and points inside a cluster are also
unordered, see Figure \ref{fig:vconf}. One can thus regard $V_K(\R^{p,q})$ as a subspace of
\[\prod_{k=1}^\infty \pa{C_k(\R^d)^{r(k)}/\frS_{r(k)}},\]
where $C_k(\R^d)$ denotes the unordered configuration space of $k$ points in
$\R^d$. Our interest for the spaces $V_K(\R^{p,q})$ has the following reasons.
\begin{enumerate}
\item[\Th{1.}] These spaces occur naturally in the description of
  free algebras over the operad $\V_{p,q}$ mentioned earlier.
\item[\Th{2.}] We note that for $p=0$ and $q=d$, the space
  $\tV_K(\R^{0,d})$ is homeomorphic to the ordered configuration space
  $\tilde{C}_{\abs{K}}(\R^d)$; however the unordered version
  $V_K(\R^{0,d})$ is in general not homeomorphic to $C_{\abs{K}}(\R^d)$,
  rather it is a covering of the latter space; the space $V_K(\R^{0,d})$ is
  an unordered configuration space of \emph{clusters of points} in $\R^d$,
  without any ‘verticality’ condition. For $d=2$, spaces of unordered
  configurations of clusters have been considered in their own sake in
  \cite{tran} and in relation to Hurwitz spaces in \cite{tietz}.
\item[\Th{3.}] For general $p,q$, the spaces $V_K(\R^{p,q})$ are
  related to the spaces of parallel submanifolds in an ambient manifold, see
  \cite{palmerdis} and \cite{latifi}.
\end{enumerate}
We will occasionally restrict to situations where all clusters have the same
size, i.e.\ $K=(k,\dotsc,k)$ for some $k\ge 1$ and let $r$ be the length of
this tuple. In these situations, we will we will simplify our notation and write
\[\tilde{V}^k_r(\R^{p,q}) \coloneqq\tV_{(k,\dotsc,k)}(\R^{p,q})
  \quad\text{and}\quad V^k_r(\R^{p,q})\coloneqq V_{(k,\dotsc,k)}(\R^{p,q}).\]

\paragraph{Results}
The first aim of this article is to compute the integral homology of the spaces
$\tV_K(\R^{p,q})$, for all choices of $p$, $q$, and $K$. We will see that
$H_*(\tV_K(\R^{p,q});\Z)$ is free abelian, and for $q=1$ it is supported in
degrees multiple of $p$.

The second aim is to prove a homological stability result: for all $k\geq 1$,
the stabilisation map $V^k_r(\R^{p,q})\to V^k_{r+1}(\R^{p,q})$, which adjoins a
new cluster of size $k$, induces isomorphisms in integral homology in degrees
$*\le\tfrac{r}{2}$.  This extends the results of \cite{tran}, \cite{palmerdis},
and \cite{latifi} who covered all cases with $p+q\geq 3$.

\paragraph{Outline}
Our article is organised as follows: in Section \ref{sec:preliminaries}, we
introduce some notation and make some first obversations about the basic
properties of these configuration spaces. In Section \ref{sec:orderedHomology},
we calculate the integral homology of the ordered vertical configuration spaces
$\tV_K(\R^{p,q})$. Then we turn in Section \ref{sec:stability} to the question
of homological stability for the unordered vertical configuration spaces
$V^k_r(\R^{p,q})$ with fixed cluster size $k$.

\paragraph{Funding}
This work was supported by the \emph{Deutsche Forschungsgemeinschaft} (German
Research Foundation) [\texttt{EXC-2047/1}, \texttt{390685813}, to A.B. and
F.K.]; the \emph{Danish National Research Foundation} through the \emph{Centre
  for Geometry and Topology} [\texttt{DNRF151}, to A.B.]; the \emph{European
  Research Council} under the \emph{European Union’s Seventh Framework Programme}
  [\texttt{ERC StG 716424 - CASe}, to A.B.]; the
\emph{Max Planck Institute for Mathematics} in Bonn [to F.K.]; and the \emph{Promotionsf\"orderung} of the \emph{Stu\-dien\-stif\-tung des Deutschen
  Volkes} [to F.K.].

\paragraph{Acknowledgements}
This project started in 2018, when both authors were PhD students of
Carl-Friedrich Böodigheimer. We would like to thank him for suggesting the
study of vertical configuration spaces and for numerous enlightening
con\-versations on the subject.

This paper also benefited from discussions with Martin Palmer about homological
stability of configuration spaces and about twisted coefficient systems, and
from many useful comments he made on a first draft.

We would like to thank Genta Latifi for sharing her master’s thesis with us.

We are grateful to Oscar Randal-Williams for a helpful conversation about
homological stability of unordered configuration spaces with labels.

Finally, we thank the anonymous referee for careful reading of the paper and
helpful comments.

\section{Preliminaries}
\label{sec:preliminaries}
In this section we introduce notation for the spaces $\tV_K(\R^{p,q})$
and $V_K(\R^{p,q})$, and make some basic observations about the topology of
these spaces.

\begin{nota}
  \label{nota:xyt}
  Recall that $\pr_1\colon \R^d\to \R^p$ denotes the projection to the first $p$
  coordinates. However, in several situations, we will make use of the
  decomposition $\R^d=\R^{p+q-1}\times \R$ and write $(\zeta,t)$ for a generic
  point in $\R^d$. Hence, we have two other projections, namely
  $\pr_\zeta\colon\R^d\to \R^{p+q-1}$ and $\pr_t\colon \R^d\to \R$. Clearly, if
  $q=1$, then $\pr_\zeta$ and $\pr_1$ coincide.
\end{nota}

\begin{nota}
  \label{nota:suggestivesum}
  We denote elements in $\tV_K(\R^{p,q})$ resp.\ $V_K(\R^{p,q})$ as
  follows:
  \begin{itemize}
  \item An element in $\tV_K(\R^{p,q})$ is an ordered collection
    $Z\coloneqq(z_1,\dotsc,z_r)$ of (vertical) clusters
    $z_i\coloneqq (z_i^1,\dotsc,z_i^{k_i})$. We will also often write
    $Z=(z_1^1,\dotsc,z_r^{k_r})$.
  \item For the unordered version, we use the suggestive sum notation: A generic
    element in $V_K(\R^{p,q})$ is an unordered collection
    $[Z]\coloneqq\sum_{i=1}^r[z_i]$ of unordered (vertical) clusters
    $[z_i]\coloneqq [z_i^1,\dotsc,z_i^{k_i}]=\{z_i^1,\dotsc,z_i^{k_i}\}$.
  \end{itemize}
\end{nota}

\begin{rem}[Path components of \texorpdfstring{$\tV_K(\R^{p,q})$}{Ṽk(Rpq)}
  and \texorpdfstring{$V_K(\R^{p,q})$}{Vk(Rpq)}]
  \label{rem:pi0Vk}
  For the ordered vertical configuration spaces, the following hold:
  \begin{itemize}
  \item For $q\ge 2$ the space $\tV_K(\R^{p,q})$ is connected.
  \item For $q=1$ and $p\ge 1$, the space $\tV_K(\R^{p,q})$ has one component
    $\tV_K(\R^{p,q})_{\Sigma}$ for each tuple
    $\Sigma=(\sigma_1,\dotsc,\sigma_r)\in \prod_i\frS_{k_i}$ of
    permutations. This component contains all configurations
    $(z_{1}^1,\dotsc,z_r^{\smash{k_r}})$ with
    $\smash{\pr_t(z_i^{\smash{\sigma_i(j)}})<\pr_t(z_i^{\smash{\sigma_i(j+1)}})}$
    for all $1\le i\le r$ and $1\le j< k_i$.
  \item For $q=1$ and $p=0$, we note that $\tV_K(\R^{0,1})=\tilde{C}_{\abs{K}}(\R)$,
    so each permutation $\sigma\in\frS_{|K|}$ corresponds to a connected component
    which contains all configurations $(z_1^1,\dotsc,z_r^{\smash{k_r}})=
    (z^1,\dotsc,z^{\abs{K}})$ with $z^i<z^{\sigma(i)}$.
  \end{itemize}
  We have inclusions
  $\prod_i\frS_{k_i}\subseteq\frS_{K}\subseteq \frS_{|K|}$ and the group
  $\frS_{K}=\prod_k\frS_k\wr\frS_{r(k)}$ acts on $\pi_0\tV_K(\R^{p,q})$
  with quotient equal to $\pi_0V_K(\R^{p,q})$. Since the action is transitive
  in the first two cases listed above, the space $V_K(\R^{p,q})$ is connected
  for $(p,q)\ne (0,1)$, whereas for $(p,q)=(0,1)$ we can identify
  \[\pi_0V_K(\R^{0,1}) \cong \frS_{|K|}/\frS_K.\]
  The latter set can also be identified with the set of unordered partitions of
  $\{1,\dotsc,|K|\}$ into subsets of sizes $k_1,\dotsc,k_r$; such that for
  all $k\ge1$ there are $r(k)$ partition components of size $k$.
\end{rem}

\begin{rem}[\texorpdfstring{$\tV$}{Ṽ} and \texorpdfstring{$V$}{V} are manifolds]
  The space $\tV_K(\R^{p,q})$ is an open subspace of $(\R^p)^r\times
  (\R^q)^{|K|}$ and hence an orientable smooth manifold of dimension $p\cdot r
  + q\cdot |K|$. The action of $\frS_{K}$ is free, so $V_K(\R^{p,q})$ is
  again a manifold of the same dimension. The manifold $V_K(\R^{p,q})$ is
  non-orientable if and only if at least one of the following holds:
  \begin{itemize}
  \item $q\geq 3$ is odd and there is at least one cluster of some size $k\ge2$;
    then a path in $V_K(\R^{p,q})$ interchanging two points of this cluster,
    while fixing all other points, reverses the local orientation;
  \item $p+q\ge2$ and there is some $k\ge1$ such that $p+q\cdot k$ is odd and
    $r(k)\ge 2$; then, interchanging two clusters of size $k$ while preserving
    their internal ordering and fixing all other points reverses the local
    orientation.\vspace*{4px}
  \end{itemize}
\end{rem}

\begin{rem}[Poincaré–Lefschetz duality]\label{rem:PLD}
  For a topological space $X$, we denote by $X^\infty$ its one-point
  compactification, and denote the point at infinity by $\infty$. Since
  $\tV_K(\R^{p,q})$ is an open and orientable manifold of dimension
  $p\cdot r+q\cdot \abs{K}$, we can apply Poincaré–Lefschetz duality and
  obtain
  \[H^*\pa{\tV_K(\R^{p,q})}\cong
    H_{p\cdot r+q\cdot|K|-*}\pa{\tV_K(\R^{p,q})^\infty,\infty}.\]
\end{rem}

\section{The cohomology of \texorpdfstring{$\tilde{V}_K(\R^{p,q})$}{VK(Rpq)}}
\label{sec:orderedHomology}
In this section we calculate the integral cohomology of the spaces
$\tV_K(\R^{p,q})$ for all dimensions $p\ge 0$ and $q\ge 1$. In the case
$p=0$ we recover the calculations of \cite{arnold} and
\cite[§\,\textsc{iii}.6]{clm} for the classical
configuration spaces $\tilde{C}_{\abs{K}}(\R^d)$. Let us exclude the case
$(p,q)=(0,1)$, where all components are contractible.

\subsection{Ray partitions}

\noindent We fix a partition $K=(k_1,\dots,k_r)$ for the entire
section. Before we state our main result about the cohomology of
$\tV_K(\R^{p,q})$, we need to introduce a few combinatorial notions.

\begin{defi}
  \label{defi:indset}
  The \emph{table} associated with the partition $K$ is the set
  \[T_K\coloneqq \{(i,j);\, 1\le i\le r\text{ and }1\le j\le k_i\}.\]
  We order $T_K$ lexicographically, which means we write $(i,j)<(i',j')$
  if either $i<i'$, or $i=i'$ and $j<j'$ holds.
\end{defi}
\begin{nota}
  \label{nota:lengthagility}
  For each partition $\Q$ of $T_K$ into non-empty subsets $\Q_1,\dotsc,\Q_l$
  we consider two positive integers:
  \begin{itemize}
  \item The number $l(\Q)\coloneqq l$ is called the \emph{length}
    of the partition, and in general we have $1\le l(\Q)\le \abs{K}$.
  \item Consider on $\{1,\dotsc,l\}$ the equivalence relation spanned by
    $\beta\sim\beta'$ if there are $1\le i\le r$ and $1\le j,j'\le k_i$ with
    $(i,j)\in \Q_\beta$ and $(i,j')\in \Q_{\beta'}$ (i.e.\ the
    $i$\textsuperscript{th} cluster intersects both $\Q_\beta$ and
    $\Q_{\beta'}$). The number of equivalence classes
    $1\le a(\Q)\le \min(l(\Q),r)$ will be called the \emph{agility} of the
    partition.
  \end{itemize}
\end{nota}

\begin{defi}\label{defi:raypartition}
  A \emph{ray partition $\Q$ of type $K$} is a partition $\Q_1,\dotsc,\Q_l$
  of $T_K$, with a total order $\prec_{\beta}$ on each piece $\Q_{\beta}$
  (called \emph{ray}), such that the following hold:
  \begin{enumerate}
  \item [\Th{R1.}] the components are labelled from $1$ to $l$
    according to their minimum with respect to the global order $<$, i.e.
    \[\min(\Q_1,<)<\dotsb<\min(\Q_l,<);\]
  \item [\Th{R2.}] for each $1\le\beta\le l$, the minima
    with respect to $<$ and $\prec_\beta$ coincide
    \[\min(\Q_\beta,\prec_\beta) = \min(\Q_\beta,<).\]
  \end{enumerate}
\end{defi}

\begin{defi}
  \label{defi:witnessed}
  Let $Z=(z_{1}^1,\dotsc,z_r^{\smash{k_r}})\in \tV_K(\R^{p,q})$. 
  We say that a ray partition $\Q$ is \emph{witnessed by $Z$} if the
  following conditions hold:
  \begin{enumerate}
  \item[{\small \bfseries W1.}] all $z_{i}^{\smash j}$ with $(i,j)\in \Q_\beta$
    project along $\pr_{\zeta}$ to the same point in $\R^{d-1}$.\vspace*{-5px}
  \item[{\small \bfseries W2.}] if $(i,j)\prec_{\beta}(i',j')$ in $\Q_{\beta}$,
    then $\pr_t(z_{i}^{\smash j})<\pr_t(z_{i'}^{\smash{j'}})$ in $\R$.
  \end{enumerate}
\end{defi}
Condition {\small\textbf{W1}} says that the points $z_{i,j}$ with
$(i,j)\in \Q_\beta$ lie on a line in $\R^d$ parallel to the $t$-axis;
condition {\small\textbf{W2}} ensures that the same points are assembled
on this line according to the order $\prec_{\beta}$ of their indices.
In particular, the points $z_{i,j}$ with $(i,j)\in \Q_\beta$ lie on a \emph{ray},
namely the half-line starting at $z_{\min(\Q_\beta,\prec_\beta)}$ and running
in the positive $t$-direction. See Figure \ref{fig:ray} for an example.

\begin{figure}
  \centering
  \begin{tikzpicture}[xscale=4.5,yscale=2.7]
  \node [thin,grey] at (1.45,1.2) {\tiny $\R^3$};
  \draw [very thin,bgrey] (0,0) -- (.6,.4) -- (1.4,.4);
  \draw [very thin,bgrey]  (.6,.4) -- (.6,1.2);
  \draw [grey,thin] (.3,1.033) -- (.3,0.072);
  \draw [dgrey,very thick] (.3,.25) -- (.3,.5);
  \draw [grey,thin] (.4,1.07) -- (.4,0.139);
  \draw [grey,thin] (.533,1.12) -- (.533,0.233);
  \draw [grey,thin] (.8,1.043) -- (.8,0.082);
  \draw [grey,thin] (1.023,1.14) -- (1.023,0.283);
  \draw [dgrey,very thick] (.8,.5) -- (.8,.7);
  \node [dgrey] at (.399,.496) {$\bullet$};
  \node [dgrey] at (.5315,.8475) {$\bullet$};
  \node [dgrey] at (.5315,.496) {$\bullet$};
  \node [dgrey] at (.8,.8) {$\bullet$};
  \node [dgrey] at (1.0235,.5975) {$\bullet$};
  \draw[very thick,white] (.2,.8) -- (.2,.1);
  \draw [blila,thin] (.2,0) -- (.76,.4) -- (.76,1.2);
  \draw[very thick,white] (.7,.55) -- (.7,0);
  \draw [lila] (.2,0) -- (.2,1) -- (.76,1.2);
  \draw [blila,thin] (.7,0) -- (1.16,.4) -- (1.16,1.2);
  \draw[very thick,white] (.25,1) -- (.95,1);
  \draw[very thick,white] (1,1) -- (1.4,1.2);
  \draw[very thick,white] (1,.55) -- (1,0);
  \draw [lila] (.7,0) -- (.7,1) -- (1.16,1.2);
  \draw [thin,grey] (0,0) rectangle (1,1);
  \draw [thin,grey] (1,0) -- (1.4,.4) -- (1.4,1.2) -- (1,1);
  \draw [thin,grey] (0,1) -- (.6,1.2) -- (1.4,1.2);
  \node [dred] at (.3,.25) {\tiny $\bullet$};
  \node [blue] at (.3,.4) {\tiny $\bullet$};
  \node [dred] at (.3,.5) {\tiny $\bullet$};
  \node [blue] at (.4,.5) {\tiny $\bullet$};
  \node [blue] at (.533,.5) {\tiny $\bullet$};
  \node [dred] at (.533,.85) {\tiny $\bullet$};
  \node [dgreen] at (1.023,.6) {\tiny $\bullet$};
  \node [dgreen] at (.8,.5) {\tiny $\bullet$};
  \node [dgreen] at (.8,.8) {\tiny $\bullet$};
  \node [dgreen] at (.8,.7) {\tiny $\bullet$};
  \node [dred] at (.24,.25) {\tiny $1,2$};
  \node [blue] at (.24,.4) {\tiny $3,3$};
  \node [dred] at (.24,.5) {\tiny $1,3$};
  \node [blue] at (.46,.5) {\tiny $3,2$};
  \node [blue] at (.593,.5) {\tiny $3,1$};
  \node [dred] at (.593,.85) {\tiny $1,1$};
  \node [dgreen] at (1.083,.6) {\tiny $2,1$};
  \node [dgreen] at (.86,.5) {\tiny $2,2$};
  \node [dgreen] at (.86,.8) {\tiny $2,3$};
  \node [dgreen] at (.86,.7) {\tiny $2,4$};
  \node[dgrey] at (.34,0.35) {\tiny $\Q_2$};
  \node[dgrey] at (.48,0.85) {\tiny $\Q_1$};
  \node[dgrey] at (.37,0.58) {\tiny $\Q_7$};
  \node[dgrey] at (.513,0.413) {\tiny $\Q_6$};
  \node[dgrey] at (.74,0.6) {\tiny $\Q_4$};
  \node[dgrey] at (.74,0.8) {\tiny $\Q_5$};
  \node[dgrey] at (.98,0.55) {\tiny $\Q_3$};
\end{tikzpicture}
  \caption{A configuration in the space
    $\tV_{(\textcolor{red}{3},\textcolor{dgreen}{4},\textcolor{blue}{3})}(\R^{1,2})$
    which witnesses the ray par\-ti\-tion $(\Q_1,\dotsc,\Q_7)$, where e.g.\
    $\Q_2=\{(1,2)\prec (3,3)\prec (1,3)\}$. The components $\Q_\beta$ are numbered
    according to their smallest label ({\small\textbf{R1}}), and the point carrying
    the minimal label lies at the bottom of each ray ({\small\textbf{R2}}). Recall
    that the verticality condition demands that all points belonging to the same
    cluster have to lie in the same affine plane orthogonal to the first axis. Note that this ray partition seems
    not to be the most ‘efficient’ one: we may merge $\Q_4$ and $\Q_5$. We will
    introduce a measure for ‘efficiency’ soon.}\label{fig:ray}
\end{figure}

\begin{rem}\label{rem:q1}
  If $q=1$, recall that we consider only $p\ge 1$. Then
  $\tV_K(\R^{p,1})$ is disconnected, with path components indexed by
  tuples $\Sigma\in\prod_{i=1}^r\frS_{k_i}$ (see Remark \ref{rem:pi0Vk}), and we
  would like to calculate the homology of a single path component. In order to
  do so, we assign to each ray partition $\Q$ of type $K$ such a tuple
  $\Sigma$ as follows.
\end{rem}

\begin{defi}
  \label{defi:stackedprec}
  Given a ray partition $\Q$, the ‘stacked’ total order $\prec$ on
  \[T_K=(\Q_l,\prec_l)\sqcup \dotsb\sqcup (\Q_1,\prec_1),\]
  is determined by the property that it restricts on $\Q_\beta$ to
  $\prec_\beta$ and that all elements from $\Q_{\beta+1}$ are $\prec$-smaller
  than all elements from $\Q_\beta$.
  
  For each $1\le i\le r$, there is a unique $\sigma_i\in\frS_{k_i}$
  with $(i,\sigma_i(j))\prec (i,\sigma_i(j+1))$ for all $1\le j< k_i$:
  we define $\Sigma(\Q)\coloneqq(\sigma_1,\dotsc,\sigma_r)\in\prod_{i=1}^r\frS_{k_i}$.

  The rationale for the previous definition is the following:
  a configuration $z\in\tV_K(\R^{p,1})_{\Sigma}$
  can only witness ray partitions $\Q$ with $\Sigma(\Q)=\Sigma$.
\end{defi}

The following is the main theorem of the section.
\begin{theo}\label{theo:hom}
  Let $p\ge 0$, $q\ge 1$ and $K=(k_1,\dotsc,k_r)$ with $k_i\ge1$.
  \begin{enumerate}
  \item [\upshape\Th{1.}] The integral cohomology
    $H^*\pa{\tV_K(\R^{p,q})}$ is freely generated by
    classes $u_\Q$ for each ray partition, and the cohomological degree of $u_\Q$ is
    \[|u_\Q| = p\cdot \big(r-a(\Q)\big)+(q-1)\cdot \big(|K| - l(\Q)\big).\]
  \item [\upshape\Th{2.}] For $q=1$, the cohomology class $u_\Q$
    is supported on the component $\tV_K(\R^{p,1})_{\Sigma(\Q)}$.
  \end{enumerate}
\end{theo}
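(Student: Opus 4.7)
The strategy is to construct each class $u_\Q$ as the Poincaré--Lefschetz dual (\textsc{Remark~\ref{rem:PLD}}) of an explicit, locally closed, oriented submanifold $X_\Q\subseteq\tilde{V}_\bfk(\R^{p,q})$ of the claimed codimension, and then to verify that the resulting family $\set{u_\Q}_\Q$ forms a $\Z$-basis. I would take $X_\Q$ to be the submanifold of configurations $Z$ that witness $\Q$ in the sense of \textsc{Definition~\ref{defi:witnessed}}, subject to the additional requirement that the relative $t$-positions of distinct rays within each cluster follow the stacking order $\prec$ of \textsc{Definition~\ref{defi:stackedprec}}. A direct free-parameter count yields
\[
  \dim X_\Q = p\cdot a(\Q)+(q-1)\cdot l(\Q)+\abs{\bfk},
\]
with the summands corresponding respectively to the shared $\pr_1$-coordinate for each of the $a(\Q)$ cluster-equivalence classes, the remaining $\pr_\zeta$-coordinate for each of the $l(\Q)$ rays, and a strictly increasing tuple of $t$-values for each ray; this matches the asserted codimension $\abs{u_\Q}$ in $\tilde{V}_\bfk(\R^{p,q})$, and defines $u_\Q$ as the corresponding dual class after fixing a canonical coorientation.

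The principal obstacle is proving that the classes $\set{u_\Q}_\Q$ constitute a free basis of $H^*\pa{\tilde{V}_\bfk(\R^{p,q})}$. My plan is an induction on $\abs{\bfk}$ (or equivalently on $r$), exploiting the natural sub-stratification of $\tilde{V}_\bfk(\R^{p,q})$ by the partition $\pi$ of $\set{1,\dotsc,r}$ that records which clusters share their $\pr_1$-coordinate. The open top sub-stratum, corresponding to the discrete $\pi$, is the product $\tilde{C}_r(\R^p)\times\prod_i\tilde{C}_{k_i}(\R^q)$, whose integral cohomology is free abelian by classical Arnold/Cohen and whose K\"unneth basis one identifies with the classes $u_\Q$ of maximal agility $a(\Q)=r$. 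The closed deeper sub-strata, in which certain clusters have coalesced onto common vertical fibres, are again products of classical ordered configuration spaces of the form $\tilde{C}_{\abs{\pi}}(\R^p)\times\prod_G \tilde{C}_{n_G}(\R^q)$ with $n_G=\sum_{i\in G}k_i$, and their free abelian cohomology likewise corresponds to the $u_\Q$ compatible with the given coincidence pattern. The Leray-type spectral sequence of the filtration by depth $r-\abs{\pi}$ has its $E_1$-page indexed by ray partitions grouped by agility, and a matching rank count in each cohomological degree (the number of ray partitions of that degree on each side) forces the spectral sequence to degenerate, yielding freeness.

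Finally, Part 2 follows directly from the construction of $X_\Q$. For $q=1$ the equality $\pr_\zeta=\pr_1$ implies that all rays of $\Q$ containing points of a fixed cluster $i$ automatically lie on the common vertical line of that cluster, and the inter-ray inequalities defining $X_\Q$ then enforce the relative $t$-ordering $\pr_t\pa{z_i^{\sigma_i(j)}}<\pr_t\pa{z_i^{\sigma_i(j+1)}}$, where $\sigma_i$ is the $i$-th component of $\Sigma(\Q)\in\prod_i\frS_{k_i}$. This coincides precisely with the defining condition of the component $\tilde{V}_\bfk(\R^{p,1})_{\Sigma(\Q)}$, so $X_\Q\subseteq\tilde{V}_\bfk(\R^{p,1})_{\Sigma(\Q)}$, and its Poincaré--Lefschetz dual $u_\Q$ is supported on that component.
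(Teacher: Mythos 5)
Your overall strategy — realise the $u_\Q$ as Poincar\'e--Lefschetz duals of explicit locally closed submanifolds and prove freeness through a stratification spectral sequence — is indeed the paper's strategy, and your dimension count for $X_\Q$ and your argument for Part~2 both agree with the paper. However, your choice of stratification is genuinely coarser than the paper's, and this causes the proof to break down: you stratify by the partition $\pi$ of $\set{1,\dots,r}$ recording which clusters share their $\pr_1$-coordinate, so that the strata are products $\tilde{C}_{\abs{\pi}}(\R^p)\times\prod_G\tilde{C}_{n_G}(\R^q)$, whereas the paper stratifies by the \emph{full} ray partition $\Q^Z$ of maximal weight (\textsc{Lemma~\ref{lem:rayz}}, \textsc{Definition~\ref{defi:rayfiltration}}), so that each stratum $W_\Q$ is contractible (\textsc{Lemma~\ref{lem:stratacontractible}}) and contributes a single $\Z$ in exactly the degree $\abs{u_\Q}$.

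The concrete gap is that your $E^1$-page is \emph{not} indexed by ray partitions, so the claimed degeneration fails. The factor $H^*\bigl(\tilde{C}_{\abs{\pi}}(\R^p)\bigr)$ in the cohomology of a stratum contributes classes in degrees that are multiples of $p-1$, and these have no counterpart among ray partitions; they must be killed by differentials. For a concrete counterexample, take $r=2$, $\bfk=(1,1)$, $p\ge 2$, $q\ge1$, so $\tilde{V}_\bfk(\R^{p,q})=\tilde{C}_2(\R^{p+q})$ with cohomology $\Z$ in degrees $0$ and $p+q-1$, matching the two ray partitions of $\set{(1,1),(2,1)}$. Your top stratum is homotopy equivalent to $\tilde{C}_2(\R^p)\simeq S^{p-1}$ and contributes classes in degrees $0$ and $p-1$; your depth-one stratum is homotopy equivalent to $\tilde{C}_2(\R^q)\simeq S^{q-1}$ and contributes, after the codimension-$p$ shift, classes in degrees $p$ and $p+q-1$. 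So the $E^1$-page has $\Z$ in degrees $0$, $p-1$, $p$, and $p+q-1$, and the two extra classes in degrees $p-1$ and $p$ must cancel via a differential. Consequently the spectral sequence does not collapse, and neither does the cohomology of each stratum ``correspond to the $u_\Q$ compatible with the given coincidence pattern'' as you assert. Independently of this, the argument that ``a matching rank count forces the spectral sequence to degenerate'' would not suffice even if the ranks did match: ranks can only decrease from $E^1$ to $E^\infty$, so equality of ranks with the conjectured answer gives no information unless one already knows a lower bound on $\operatorname{rk}H^*(\tV_\bfk(\R^{p,q}))$ — which is precisely what one is trying to prove. The paper avoids both problems by using the finer filtration: the strata $W_\Q$ are contractible, so the $E^1$-page is exactly free on ray partitions in the right degrees, and collapse is proved by showing that the fundamental class of the closed relative manifold $\bigl(\overline{W}{}_\Q^\infty,\infty\bigr)$ surjects onto the generator of $H_*\bigl(W_\Q^\infty,\infty\bigr)$, hence all the edge maps $H_*(F_\nu,\infty)\to H_*(F_\nu,F_{\nu-1})$ are surjective. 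To repair your approach you would either have to refine your stratification to distinguish $\pr_\zeta$-coincidences, not just $\pr_1$-coincidences (which essentially reproduces the paper's filtration), or explicitly analyse and resolve the nontrivial differentials arising from the $\tilde{C}_{\abs{\pi}}(\R^p)$-factors.
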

\noindent The rest of the section is devoted to the proof of Theorem
\ref{theo:hom}.

\subsection{The weight filtration and the proof of Theorem \ref{theo:hom}}

Throughout this section we fix $K$, $p$ and $q$ as before. We treat
simultaneously the cases $q\ge2$ and $q=1$, putting in parentheses the
differences needed in the case $q=1$.  For $q\ge2$ we abbreviate
$\tV\coloneqq \tV_K(\R^{p,q})$; for $q=1$ we fix
$\Sigma\in\prod_i\frS_{k_i}$ throughout the section and abbreviate
$\tV\coloneqq \tV_K(\R^{p,q})_\Sigma$.

\begin{nota}
  \label{nota:bbP}
  For a positive integer $\Lambda\geq 0$ we denote by $\bbP(\Lambda)$
  the set of all sequences $\lambda=\pa{\lambda_1,\dots,\lambda_l}$
  of integers $\lambda_i\geq 1$, for some $1\leq l\leq \Lambda$,
  satisfying $\lambda_1+\dotsb+\lambda_l=\Lambda$.
  The number $l$ is called the \emph{length} of the sequence.
  
  We have a natural injection
  $\bbP(\Lambda)\hookrightarrow \{0,\dotsc,\Lambda\}^\Lambda$, by adding a
  suitable number of zeroes at the end of each sequence; we consider on
  $\bbP(\Lambda)$ the inherited lexicographic order.

  We denote by $\bbP(K)$ the set $\bbP(\abs{K})$, and by $N$ its cardinality.
\end{nota}

\begin{defi}
  The \emph{weight} of a ray partition $\Q$ is defined as
  \[\omega(\Q)\coloneqq\big(\abs{\Q_1},\dots,\abs{\Q_l}\big)\in\bbP(K).\]
\end{defi}

In the following we state three lemmata and postpone their
proofs to the next subsection.
\begin{lem}
  \label{lem:rayz}
  Let $Z\in \tV$. There is a unique ray partition, called $\Q^Z$,
  which is witnessed by $Z$ and has maximal weight among all ray partitions
  witnessed by $Z$. (If $q=1$, we have moreover that $\Sigma(\Q^Z)=\Sigma$.)
\end{lem}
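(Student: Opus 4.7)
I would construct $\Q^Z$ greedily and then show that the same greedy argument forces every witnessed ray partition to have at most this weight, which yields both existence and uniqueness.

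\textbf{Construction.} Set $S_0 \coloneqq \ind_\bfk$ and iterate: put $(i_\beta, j_\beta) \coloneqq \min(S_{\beta-1}, <)$,
\[
\Q_\beta \coloneqq \set{(i,j) \in S_{\beta-1} : \pr_\zeta(z_i^j) = \pr_\zeta(z_{i_\beta}^{j_\beta}) \text{ and } \pr_t(z_i^j) \ge \pr_t(z_{i_\beta}^{j_\beta})},
\]
with $\prec_\beta$ induced by increasing $\pr_t$, and $S_\beta \coloneqq S_{\beta-1} \setminus \Q_\beta$. The iteration terminates at some length $l$. Conditions \textbf{W1} and \textbf{W2} hold by design. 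For \textbf{R2}, the element $(i_\beta, j_\beta)$ is the $\prec_\beta$-minimum (smallest $\pr_t$) and also the $<$-minimum of $\Q_\beta \subseteq S_{\beta-1}$. For \textbf{R1}, the sequence $(i_1, j_1), (i_2, j_2), \dotsc$ is strictly $<$-increasing since the sets $S_\beta$ are strictly nested.

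\textbf{Maximality and uniqueness.} Let $\Q'$ be an arbitrary witnessed ray partition. I induct on $\beta$ to show that if $\Q'_{\beta'} = \Q_{\beta'}$ for every $\beta' < \beta$, then $\Q'_\beta \subseteq \Q_\beta$: condition \textbf{R1} forces $\min(\Q'_\beta, <) = (i_\beta, j_\beta)$; then \textbf{W1} forces every element of $\Q'_\beta$ to share $\pr_\zeta$ with $z_{i_\beta}^{j_\beta}$, and \textbf{R2} combined with \textbf{W2} forces it to have $\pr_t \ge \pr_t(z_{i_\beta}^{j_\beta})$. As soon as $\Q'_\beta \subsetneq \Q_\beta$, the lex order gives $\omega(\Q') < \omega(\Q^Z)$; otherwise the induction continues and eventually yields $\Q' = \Q^Z$. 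This is the most delicate step, as the four axioms on $\Q'$ have to conspire to pin down $\Q'_\beta$ ray by ray.

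\textbf{The case $q = 1$.} Here $\pr_\zeta = \pr_1$, so by verticality all $k_i$ points of cluster $i$ lie on a single $t$-parallel line. For indices $(i, j_1), (i, j_2)$ in cluster $i$ with $\pr_t(z_i^{j_1}) < \pr_t(z_i^{j_2})$, call $\beta_1, \beta_2$ the rays they belong to. The case $\beta_1 < \beta_2$ is impossible, because the step forming $\Q_{\beta_1}$ absorbs all indices on that line with $\pr_t \ge \pr_t(z_i^{j_1})$, including $(i, j_2)$. If $\beta_1 = \beta_2$, then $(i, j_1) \prec (i, j_2)$ by \textbf{W2}; and if $\beta_1 > \beta_2$, then $(i, j_1) \prec (i, j_2)$ by the stacked order. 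Hence $\prec$ restricted to cluster $i$ is $\pr_t$-increasing, which by the description of $\tilde{V}_\bfk(\R^{p,1})_\Sigma$ in \textsc{Remark~\ref{rem:pi0Vk}} is precisely the ordering induced by $\sigma_i$; therefore $\Sigma(\Q^Z) = \Sigma$.
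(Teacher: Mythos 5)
Your proof is correct and follows essentially the same strategy as the paper's: the same greedy ray-by-ray construction of $\Q^Z$, the same inductive containment argument ($\Q'_{\beta'}=\Q_{\beta'}$ for $\beta'<\beta$ implies $\Q'_\beta\subseteq\Q_\beta$) yielding both maximality and uniqueness via the lexicographic order, and the same observation that the stacked order on $\Q^Z$ agrees with the $t$-ordering within each cluster, giving $\Sigma(\Q^Z)=\Sigma$ when $q=1$. The only cosmetic differences are that you phrase the $q=1$ case as a three-way case split on $\beta_1$ versus $\beta_2$, where the paper states the comparison more uniformly; and in the $\beta_1<\beta_2$ case your stated absorption threshold $\pr_t(z_i^{j_1})$ is really $\pr_t(z_{i_{\beta_1}}^{j_{\beta_1}})\leq\pr_t(z_i^{j_1})$, but the argument goes through regardless.
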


\begin{defi}
  \label{defi:rayfiltration}
  Given a ray partition $\Q$, we denote by $W_\Q\subset \tV$ the subspace
  containing all points $Z$ with $\Q^Z=\Q$ (see Lemma \ref{lem:rayz}). We
  define a filtration $F_{\bullet}$ on $\tV\cpt$ (see Remark \ref{rem:PLD})
  indexed by the linearly ordered set $\bbP(K)$: for all
  $\lambda\in\bbP(K)$ define the $\lambda$\textsuperscript{th} filtration
  level $F_{\lambda}= F_{\lambda}\tV\cpt$ as the subspace containing $\infty$
  and all $Z\in \tV$ with $\omega(\Q^Z)\geq\lambda$. Note that for
  $\lambda<\lambda'$ in $\bbP(K)$ we have an inclusion
  $F_{\lambda'}\subset F_{\lambda}$.
\end{defi}

\begin{lem}
  \label{lem:filtrationclosed}
  Let $\lambda\in\bbP(K)$.
  Then the inclusion $F_{\lambda}\subseteq \tV\cpt$ is closed.
\end{lem}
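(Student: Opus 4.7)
The plan is to reduce the claim to showing that $F_\lambda\cap\tV$ is closed in $\tV$. Since $\infty\in F_\lambda$, the complement $\tV\cpt\setminus F_\lambda$ lies entirely in $\tV$, and by the definition of the one-point compactification topology, such a subset is open in $\tV\cpt$ if and only if it is open in $\tV$. Hence the claim becomes: the set $\set{Z\in\tV \,:\, \omega(\Q^Z)\ge\lambda}$ is closed in $\tV$.

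To verify this, I would take a convergent sequence $Z_n\to Z$ in $\tV$ with $\omega(\Q^{Z_n})\ge\lambda$ for all $n$. Since the set of ray partitions of type $\bfk$ is finite, after passing to a subsequence we may assume that $\Q^{Z_n}$ equals a fixed ray partition $\Q^\circ$ with $\omega(\Q^\circ)\ge\lambda$. The crucial step is then to show that $\Q^\circ$ is also witnessed by the limit~$Z$: condition \textbf{W1} passes to the limit directly by continuity of $\pr_\zeta$, and for condition \textbf{W2}, the strict inequalities $\pr_t(z_{n,i}^j)<\pr_t(z_{n,i'}^{j'})$ yield only the weak inequality $\pr_t(z_i^j)\le\pr_t(z_{i'}^{j'})$ in the limit. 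However, equality cannot occur: combined with \textbf{W1} (now established at $Z$) it would force $z_i^j=z_{i'}^{j'}$, contradicting the fact that configurations in $\tV$ consist of pairwise distinct points. So $\Q^\circ$ is witnessed by $Z$, and by the maximality statement of \textsc{Lemma \ref{lem:rayz}} we obtain $\omega(\Q^Z)\ge\omega(\Q^\circ)\ge\lambda$, so $Z\in F_\lambda$.

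In the case $q=1$, where $\tV=\tV_\bfk(\R^{p,1})_\Sigma$ is a single component, the same reasoning applies, since a component is itself closed in the ambient configuration space so the limit $Z$ still lies in $\tV$. The only technical subtlety is the preservation of the strict inequality \textbf{W2} under passage to the limit, and I do not expect this to cause real difficulty: it rests purely on the distinctness of points enforced by the definition of $\tV$. Notably, no argument about how the equivalence relation defining $\Q^Z$ may become coarser at the limit is required, because the comparison $\omega(\Q^Z)\ge\omega(\Q^\circ)$ is obtained directly from the maximality characterisation in \textsc{Lemma \ref{lem:rayz}}.
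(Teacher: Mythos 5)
Your argument is correct, and it takes a genuinely different route from the paper's. The paper shows directly that the complement $\tV\cpt\setminus F_\lambda$ is open: it fixes $\mathring Z$ with $\omega(\Q^{\smash{\mathring Z}})<\lambda$, chooses a quantitative $\varepsilon>0$ (the minimum of all positive pairwise distances between distinct $\pr_\zeta$- and $\pr_t$-projections of the points of $\mathring Z$), and then re-runs the inductive comparison from the proof of \textsc{Lemma~\ref{lem:rayz}} to show that every $Z$ in the $\tfrac{\varepsilon}{2}$-neighbourhood of $\mathring Z$ satisfies $\omega(\Q^Z)\le\omega(\Q^{\smash{\mathring Z}})$. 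You instead verify sequential closedness (sufficient since $\tV$ is a manifold, hence metrizable), pass to a subsequence on which the maximal ray partition $\Q^{Z_n}$ is a constant $\Q^\circ$ — this is where the finiteness of the set of ray partitions of type $\bfk$ enters — and then check that \textbf{W1} passes to the limit by continuity of $\pr_\zeta$ while \textbf{W2} passes because an equality $\pr_t(z_i^j)=\pr_t(z_{i'}^{j'})$ together with \textbf{W1} would force a coincidence of distinct configuration points. The crucial simplification is then that you invoke the maximality characterisation of $\Q^Z$ from \textsc{Lemma~\ref{lem:rayz}} as a black box to get $\omega(\Q^Z)\ge\omega(\Q^\circ)\ge\lambda$, rather than redoing the induction $\Q^Z_\gamma\subseteq\Q^{\mathring Z}_\gamma$ by hand. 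Both proofs establish the same upper semi-continuity of $Z\mapsto\omega(\Q^Z)$; the paper's is more explicit and quantitative (useful if one wants a uniform neighbourhood), whereas yours is shorter and re-uses the existing lemma more efficiently. Your final remark about the $q=1$ case is fine but slightly over-engineered: $\tV\cpt$ is already defined as the one-point compactification of the single component $\tV_\bfk(\R^{p,1})_\Sigma$, so a sequence in $\tV$ converging in $\tV$ automatically has its limit in that component with nothing further to check.
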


\begin{nota}
  \label{nota:filtrationstrata}
  We can switch our indexing set of the filtration $F_{\bullet}$ from Definition
  \ref{defi:rayfiltration} from $\bbP(K)$ to the natural numbers
  $1\leq \nu\leq N$ in the following way: let
  $\chi\colon\set{1,\dots,N}\to\bbP(K)$ be the unique order-\emph{reversing}
  bijection; then for $1\leq \nu\leq N$ we define $F_{\nu}=F_{\chi(\nu)}$.
  Moreover we set $F_0\coloneqq\set{\infty}\subset \tV\cpt$. We obtain an
  \emph{ascending} filtration of $\tV\cpt$ with closed levels (see Lemma
  \ref{lem:filtrationclosed}):
  \[\set{\infty}=F_0 \subseteq F_1\,\subseteq\dotsb\subseteq F_N=\tV\cpt.\]  
  We also denote $F_{-1}\coloneqq\emptyset$, and for $0\leq \nu\leq N$ we denote
  by $\frF_{\nu}$ the $\nu$\textsuperscript{th} filtration stratum of the
  filtration $F_{\bullet}$, i.e.\ the difference
  $\frF_{\nu}=F_{\nu}\setminus F_{\nu-1}$.
\end{nota}

\newpage

\begin{lem}
  \label{lem:stratacontractible}
  The strata satisfy the following properties:
  \begin{enumerate}
  \item [\textup{\bfseries\small 1.}] For each ray partition $\Q$ (with
    $\Sigma(\Q)=\Sigma$), the subspace $W_\Q$ is a contractible open manifold of
    dimension $\abs{K}+p\cdot a(\Q) + (q-1)\cdot l(\Q)$ and a path component
    of the stratum $\frF_{\nu}$, where $1\leq\nu\leq N$ satisfies
    $\chi(\nu)=\omega(\Q)$.
  \item [\textup{\bfseries\small 2.}] All connected components of a stratum
    $\frF_{\nu}$ with $\nu\geq 1$ arise in this way.
  \item [\textup{\bfseries\small 3.}] The closure $\ol{W}_\Q$ of $W_\Q$
    inside $\tV$ is also a smooth, orientable submanifold of dimension
    $\abs{K}+p\cdot a(\Q) + (q-1)\cdot l(\Q)$.
  \end{enumerate}
\end{lem}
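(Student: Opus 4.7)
I will parametrize $W_\Q$ and its closure $\ol W_\Q$ simultaneously by coordinates adapted to the ray structure of $\Q$, realizing both as open subsets of a single Euclidean space of the predicted dimension. Contractibility then follows by recognizing $W_\Q$ as the complement in this Euclidean space of finitely many half-hyperplanes, one for each potential merger of a pair of rays.

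\textbf{Step 1 (parametrization).} Consider tuples $(x_\alpha, y_\beta, t_{i,j}) \in (\R^p)^{a(\Q)} \times (\R^{q-1})^{l(\Q)} \times \R^{|\bfk|}$ with the $t_{i,j}$ strictly $\prec_\beta$-ordered within each ray, and define $\Phi_\Q$ by
\[
z_i^j \;=\; \pa{x_{\alpha(\beta)},\, y_\beta,\, t_{i,j}} \;\in\; \R^p\times\R^{q-1}\times\R,
\]
where $\beta$ is the ray of $\Q$ containing $(i,j)$ and $\alpha(\beta)$ its cluster-equivalence class. Each image of $\Phi_\Q$ witnesses $\Q$: \textbf{W1} from the shared $\pr_\zeta$-coordinates within a ray, \textbf{W2} from the $\prec_\beta$-ordering. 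The map $\Phi_\Q$ is injective (the parameters are recovered by reading off coordinates of $Z$), proper, and of maximal rank, hence a smooth proper embedding. Its image is exactly $\ol W_\Q$, which establishes the dimension $p\cdot a(\Q) + (q-1)\cdot l(\Q) + |\bfk|$ and proves claim 3.

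\textbf{Step 2 (identification of $W_\Q$).} Inside $\ol W_\Q$, the subset $W_\Q$ is cut out by the maximality condition that no higher-weight ray partition is witnessed by $Z$. Since any higher-weight partition of a configuration witnessing $\Q$ is obtained by merging rays of $\Q$, it suffices to prevent pairwise mergers: a merger $\Q_\beta \cup \Q_{\beta'}$ (for $\beta<\beta'$) has $\prec$-minimum $\min_< \Q_\beta$ by \textbf{R1}/\textbf{R2}, and by \textbf{W2} it is witnessed precisely when the two base points coincide and $t_{\min \Q_\beta} < t_{\min \Q_{\beta'}}$. Each such ``bad'' locus is a half-hyperplane
\[
B_{\beta,\beta'} \;=\; \set{\Delta_{\beta,\beta'} = 0} \cap \set{\delta_{\beta,\beta'} > 0}
\]
in the parameter space, where $\Delta_{\beta,\beta'} \in \R^{p+q-1}$ is the difference of base points of rays $\beta$ and $\beta'$, and $\delta_{\beta,\beta'} = t_{\min \Q_{\beta'}} - t_{\min \Q_{\beta}}$. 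Thus $W_\Q = \ol W_\Q \setminus \bigcup_{\beta<\beta'} B_{\beta,\beta'}$ is open of the same dimension. Since $\frF_\nu = \bigsqcup_{\omega(\Q) = \chi(\nu)} W_\Q$, claims 1 and 2 will follow once $W_\Q$ is shown to be contractible.

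\textbf{Step 3 (contractibility) and main obstacle.} I contract $W_\Q$ onto the convex subset where all $\delta_{\beta,\beta'}$ are strictly negative (equivalently, $t_{\min \Q_1} > t_{\min \Q_2} > \dotsb > t_{\min \Q_{l(\Q)}}$ with generous separation), which is a nonempty open convex set and hence contractible. The retract proceeds in two phases: first linearly homotope the $t$-coordinates to such a target, then linearly contract the base-point coordinates $(x_\alpha, y_\beta)$ to the origin, which is safe because $\delta_{\beta,\beta'} < 0$ blocks every merger regardless of base points. The principal obstacle is the first phase: the linear $t$-homotopy can leave $\tV$ if two points sharing a base point have their $t$-coordinates cross, so the target $t$-values must be chosen carefully in relation to the initial configuration to preserve the strict $t$-order on each $\pr_\zeta$-fibre throughout the homotopy. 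This reduces to a convex-geometric argument about the arrangement of the $B_{\beta,\beta'}$, together with the observation that whenever a collision threatens, the two $t$-coordinates can be pre-separated without exiting $W_\Q$.
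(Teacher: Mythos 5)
Your proof attempt follows the same general outline as the paper---parametrize the closure of~$W_\Q$ by an explicit Euclidean chart, then identify $W_\Q$ as a dense open complement inside it and contract---but Step~2 contains a substantive error that then infects Step~3, and Step~3 is in any case left incomplete.

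\textbf{Step 2 is wrong.} You characterize $W_\Q$ inside its closure as the complement of the loci $B_{\beta,\beta'}=\{\Delta_{\beta,\beta'}=0\}\cap\{\delta_{\beta,\beta'}>0\}$ with $\delta_{\beta,\beta'}=t_{\min\Q_{\beta'}}-t_{\min\Q_\beta}$, on the grounds that ``any higher-weight partition of a configuration witnessing $\Q$ is obtained by merging rays of~$\Q$, so it suffices to prevent pairwise mergers''. This reduction is false. Take $\bfk=(1,2)$, $q=1$, $\Q_1=\{(1,1)\}$, $\Q_2=\{(2,1)\prec(2,2)\}$, and a configuration $Z$ with all three points on the same vertical line and $\pr_t(z_2^1)<\pr_t(z_1^1)<\pr_t(z_2^2)$. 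Then $Z\in H_\Q$ and $\delta_{1,2}<0$, so by your criterion $Z$ should lie in $W_\Q$. But the greedy partition $\Q^Z$ has $\Q^Z_1=\{(1,1),(2,2)\}$ and $\Q^Z_2=\{(2,1)\}$, of weight $(2,1)>(1,2)=\omega(\Q)$, so $Z\notin W_\Q$. Note that $\Q^Z$ is \emph{not} obtained from $\Q$ by merging rays: it reshuffles which ray contains $(2,2)$. The correct description of $W_\Q$ inside $\ol W_\Q$ (this is what the paper proves) is: for every pair $\beta<\beta'$, either $\pr_\zeta(z_i^j)\neq\pr_\zeta(z_{i'}^{j'})$ for (equivalently, all) $(i,j)\in\Q_\beta$, $(i',j')\in\Q_{\beta'}$, or else \emph{every} $t$-coordinate in $\Q_{\beta'}$ is strictly smaller than \emph{every} $t$-coordinate in $\Q_\beta$. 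The relevant inequality is therefore $\max_{\Q_{\beta'}}(t)<\min_{\Q_\beta}(t)$, not $\min<\min$.

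\textbf{Step 3 is left open, and the proposed fix is not a fix.} Because your $\delta_{\beta,\beta'}$ is the wrong function, the ``target'' convex set you describe (``$t_{\min\Q_1}>\dots>t_{\min\Q_{l(\Q)}}$'') need not lie inside $W_\Q$, so the second-phase contraction of the base points is not obviously safe; the ``generous separation'' hedge and the appeal to ``pre-separating colliding $t$-coordinates'' are not arguments. The paper avoids your two-phase difficulty entirely by performing a \emph{single} linear interpolation in all coordinates simultaneously to the configuration $\mathring Z$ whose clusters all project to $0\in\R^{p+q-1}$ and whose $t$-coordinates are in the ``stacked'' order $\Q_l<\dots<\Q_1$ (Definition~\ref{defi:stackedprec}). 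The point of choosing $\pr_\zeta(\mathring z)=0$ is that for $0<s\leq1$ the interpolated $\pr_\zeta$-values are positive scalar multiples of those of $Z$, so any $\pr_\zeta$-inequality of $Z$ is preserved; and the stacked choice of $\mathring t$ makes the $t$-inequality $\pr_t(z_{i'}^{j'})<\pr_t(z_i^j)$ hold at both endpoints for every pair of rays, hence throughout by linearity. Thus the defining condition of $W_\Q$ (``for each pair of rays, $\pr_\zeta$-inequality \emph{or} full $t$-separation'') is checked to hold along the entire path, and no case analysis about ``pre-separation'' is needed.

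Two smaller points. Your Step~1 asserts that the image of $\Phi_\Q$ equals $\ol W_\Q$, but this requires showing $W_\Q$ is dense in the image $H_\Q$ (the paper does this by a perturbation of the parameters $\zeta^2_\beta$ and of one representative $\zeta^1_\beta$ per equivalence class of rays); you skip it. And the term ``half-hyperplane'' is inaccurate: the locus $\{\Delta_{\beta,\beta'}=0\}$ has codimension up to $p+q-1$, and for $q=1$ with $\alpha(\beta)=\alpha(\beta')$ it is in fact all of the chart, so the removed set has positive codimension only in the $t$-direction. None of this affects the correct version of the argument, but the phrasing suggests a too-optimistic picture of the arrangement being removed.
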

We are now ready to prove Theorem \ref{theo:hom}.
\begin{proof}[Proof of Theorem \ref{theo:hom}]
  We consider the Leray spectral sequence associated with the filtered space $\tV\cpt$
  and compute its reduced homology. The $E^1$-page reads
  \[E^1_{\nu,\mu}=H_{\nu+\mu}\pa{F_{\nu},F_{\nu-1}}=\tH_{\nu+\mu}\pa{F_{\nu}/
      F_{\nu-1}}.\]
  By Lemma \ref{lem:stratacontractible}, for all $1\leq \nu\leq N$, $\frF_{\nu}$
  is the disjoint union of the open manifolds $W_{\Q}$ for $\Q$ varying in the
  finite set of ray partitions with $\omega(\Q)=\chi(\nu)$. By Lemma
  \ref{lem:filtrationclosed} we have homeomorphisms
  \[F_\nu/ F_{\nu-1}\cong \frF_\nu\cpt\cong \bigvee_{\omega(\Q)=\chi(\nu)}W_{\Q}\cpt.\]
  Even for $\nu=0$ we have that $F_0=F_0/F_{-1}=\set{\infty}$ is formally homeomorphic
  to the \emph{empty wedge}. By Lemma \ref{lem:stratacontractible}, $W_{\Q}$
  is an open manifold of dimension  $d(\Q)\coloneqq \abs{K}+p\cdot a(\Q) + (q-1)\cdot l(\Q)$
  for all ray partitions; hence we can apply Poincaré–Lefschetz duality and obtain for
  all $\nu,\mu\geq 0$ an isomorphism
  \[E^1_{\nu,\mu}\cong\bigoplus_{\omega(\Q)=\chi(\nu)}
    H_{\nu+\mu}\pa{W_{\Q}\cpt,\infty} \cong\bigoplus_{\omega(\Q)=\chi(\nu)}
    H^{d(\Q)-\nu-\mu}\pa{W_{\Q}}.\]
  Again by Lemma \ref{lem:stratacontractible}, $W_{\Q}$ is contractible for all
  ray partitions $\Q$; hence $H^{d(\Q)-\nu-\mu}\pa{W_{\Q}}$ contributes to the
  first page of the spectral sequence only in the case $\mu+\nu=d(\Q)$. We can
  rewrite, for all $\nu\geq 0$ and considering all degrees $\mu$ at the same time
  \[E^1_{\nu,*}\cong\bigoplus_{\omega(\Q)=\chi(\nu)} H_{d(\Q)}\pa{W_{\Q}\cpt,\infty}.\]
  Since by Lemma \ref{lem:filtrationclosed}, $F_\bullet$ is a closed filtration
  and $W_\caQ$ is a path component of $\frF_\nu$, we can now, for all ray
  partitions $\Q$, replace the relative homology of the pair $(W_{\Q}\cpt,\infty)$
  with the relative homology of the pair $\pa{F_{\nu},F_{\nu}\setminus W_{\Q}}$
  or, by excision, the relative homology of the pair
  $\pa{\ol{W}{}_{\Q}\cpt,\ol{W}{}_{\Q}\cpt\setminus W{}_{\Q}}$. Here, as in
  Lemma \ref{lem:stratacontractible}, we denote by $\ol{W}_{\Q}$ the closure in
  $\tV$ of $W_{\Q}$, and by $\ol{W}{}_{\Q}\cpt$ the one-point compactification of
  $\ol{W}_{\Q}$ (it coincides, for $\nu\ge1$, with the closure of $W_{\Q}$ in
  $\tV\cpt$). We obtain
  \[E^1_{\nu,*}\cong\bigoplus_{\omega(\Q)=\chi(\nu)}
    H_{d(\Q)} \pa{\ol{W}{}_{\Q}\cpt,\ol{W}{}_{\Q}\cpt\setminus W{}_{\Q}}.\]
  Each direct summand in the previous decomposition is isomorphic to $\Z$,
  generated by the fundamental class of the relative manifold
  $\pa{\ol{W}{}_{\Q}\cpt,\ol{W}{}_{\Q}\cpt\setminus W{}_{\Q}}$.

  By Lemma \ref{lem:stratacontractible}, also $\pa{\ol{W}{}_{\Q}\cpt,\infty}$
  is a relative manifold, and its fundamental class projects to that of
  $\pa{\ol{W}{}_{\Q}\cpt,\ol{W}{}_{\Q}\cpt\setminus W{}_{\Q}}$
  under the natural map
  \[H_{d(\Q)}\pa{\ol{W}{}_{\Q}\cpt,\infty} \to
    H_{d(\Q)}\pa{\ol{W}{}_{\Q}\cpt,\ol{W}{}_{\Q}\cpt\setminus W{}_{\Q}}.\]
  The previous analysis shows in particular that for all $\nu\geq 0$ the
  natural map $H_*\pa{F_{\nu},\infty}\to H_*\pa{F_{\nu},F_{\nu-1}}$ is surjective.
  This suffices to prove that the spectral sequence collapses on its first page:
  any element in the first page is represented by a genuine relative cycle of a
  pair $\pa{F_{\nu},\infty}$, so it must survive to the limit. This shows that
  $H_*(\tV^\infty,\infty)$ is freely generated by the fundamental classes of
  the relative submanifolds $(\ol{W}{}^\infty_\Q,\infty)$, so by
  Poincar\'e–Lefschetz duality, $H^*(\tV)$ is generated by their duals,
  which we call $u_\Q$; for each ray partition $\Q$ we finally see
  \begin{align*}
    \abs{u_\Q} &= p\cdot r + q\cdot |K| - d(\Q)\\
               &= p\cdot (r-a(\Q)) + (q-1)\cdot (|K| - l(\Q)).
  \end{align*}
  For $q=1$ and a fixed component $\Sigma\in\prod_i\frS_{k_i}$, the entire argument
  takes place inside $\tV=\tV_K(\R^{p,q})_\Sigma$;
  more precisely, for each ray partition $\Q$ with $\Sigma(\Q)=\Sigma$,
  we have $\ol{W}{}_{\Q}\subset\tV_K(\R^{p,q})_\Sigma$.
  Thus, the second claim of the theorem follows.
\end{proof}

\subsection{Proofs of the three lemmata}

\begin{proof}[Proof of Lemma \ref{lem:rayz}]
  We construct $\Q^Z$ by recursively constructing a sequence
  $(\Q^Z_1,\prec^Z_1),\dotsc,(\Q^Z_\gamma,\prec^Z_\gamma)$ with
  non-empty and disjoint subsets $\Q^Z_1,\dotsc,\Q^Z_\gamma$
  of $T_K$ satisfying the axioms \Th{R1}
  and \Th{R2}, see Figure \ref{fig:algorithm}.
  \begin{itemize}
  \item For ‘$\gamma=1$’, we write $z_1^1=\pa{\zeta_1^1,t_1^1}\in \R^d$, and let
    $\Q^Z_1$ contain all $(i,j)\in T_K$ such that $\smash{z_i^{\smash{j}}}\in
    \R^d$ has the form $\pa{\zeta_1^1,t}$ for some $t\ge t_1^1$; in other words,
    $\Q^Z_1$ contains all $(i,j)\in T_K$ such that $\smash{z_i^{\smash{j}}}$
    lies on the \emph{ray} starting at $z_1^1$ and running in the \emph{positive}
    $t$-direction. The order $\prec^Z_1$ on $\Q^Z_1$ is defined in such a way that
    condition \Th{W2} holds.
  \item For ‘$\gamma-1\to \gamma$’, if $\smash{\Q^Z_1\sqcup\dotsb\sqcup
      \Q^Z_{\gamma-1}\ne T_K}$ have been constructed, let $(i_\gamma,j_\gamma)$
    be the minimal pair in $\smash{T_K\smallsetminus (\Q^Z_1\sqcup\dotsb\sqcup
      \Q^Z_{\gamma-1})}$.
    
    Write $\smash{z_{i_\gamma}^{\smash{j_\gamma}}=
      (\zeta_{i_\gamma}^{\smash{j_\gamma}},t_{i_\gamma}^{\smash{j_\gamma}})}$
    and let $\Q_\gamma^Z$ contain all $(i,j)\in T_K\smallsetminus
    (\Q_1^Z\sqcup\dotsb\sqcup \Q^Z_{\gamma-1})$ such that $z_{i,j}\in\R^d$ has the
    form $\smash{(\zeta_{i_\gamma}^{\smash{j_\gamma}},t)}$ for some $t\ge
    t_{i_\gamma,j_\gamma}$, and define the order $\smash{\prec^Z_\gamma}$ on
    $\Q^Z_\gamma$ in such a way that condition \Th{W2} holds.
  \end{itemize}
  Since $T_K$ is finite, this algorithm terminates, and the resulting
  sequence $\Q^Z\coloneqq(\Q^Z_1,\prec^Z_1),\dotsc,(\Q^Z_{l^Z},\prec^Z_{l^Z})$ is
  a ray partition, which is witnessed by $Z$.

  \begin{figure}[h]
    \centering
    \begin{tikzpicture}[xscale=2.5,yscale=3]
  \draw[thick,dgrey] (.7,.6) -- (.7,.9);
  \node[dgrey] at (.6,.83) {\tiny $\Q^Z_1$};
  \draw[dgrey] (.82,.675) node[rotate=90] {\tiny $\prec$};
  \draw[dgrey] (.82,.825) node[rotate=90] {\tiny $\prec$};
  \draw[dgrey,thin] (.1,0.05) rectangle (.93,1);
  \node[dyellow] at (.3,.7) {\tiny $\bullet$};
  \node[blue] at (.3,.45) {\tiny $\bullet$};
  \node[blue] at (.3,.3) {\tiny $\bullet$};
  \node[dyellow] at (.3,.15) {\tiny $\bullet$};
  \node[red] at (.7,.9) {\tiny $\bullet$};
  \node[dgreen] at (.7,.75) {\tiny $\bullet$};
  \node[red] at (.7,.6) {\tiny $\bullet$};
  \node[red] at (.7,.45) {\tiny $\bullet$};
  \node[dgreen] at (.7,.3) {\tiny $\bullet$};
  \node[dyellow] at (.42,.7) {\tiny $4,1$};
  \node[blue] at (.42,.45) {\tiny $2,2$};
  \node[blue] at (.42,.3) {\tiny $2,1$};
  \node[dyellow] at (.42,.15) {\tiny $4,2$};
  \node[red] at (.82,.9) {\tiny $1,3$};
  \node[dgreen] at (.82,.75) {\tiny $3,2$};
  \node[red] at (.82,.6) {\tiny $1,1$};
  \node[red] at (.82,.45) {\tiny $1,2$};
  \node[dgreen] at (.82,.3) {\tiny $3,1$};
\end{tikzpicture}
\begin{tikzpicture}[xscale=2.5,yscale=3]
  \draw[thick,dgrey] (.7,.6) -- (.7,.9);
  \node[dgrey] at (.6,.83) {\tiny $\Q^Z_1$};
  \draw[dgrey] (.82,.675) node[rotate=90] {\tiny $\prec$};
  \draw[dgrey] (.82,.825) node[rotate=90] {\tiny $\prec$};
  \node[dgrey] at (.7,.448) {$\bullet$};
  \node[dgrey] at (.6,.45) {\tiny $\Q^Z_2$};
  \draw[dgrey,thin] (.1,0.05) rectangle (.93,1);
  \node[dyellow] at (.3,.7) {\tiny $\bullet$};
  \node[blue] at (.3,.45) {\tiny $\bullet$};
  \node[blue] at (.3,.3) {\tiny $\bullet$};
  \node[dyellow] at (.3,.15) {\tiny $\bullet$};
  \node[red] at (.7,.9) {\tiny $\bullet$};
  \node[dgreen] at (.7,.75) {\tiny $\bullet$};
  \node[red] at (.7,.6) {\tiny $\bullet$};
  \node[red] at (.7,.45) {\tiny $\bullet$};
  \node[dgreen] at (.7,.3) {\tiny $\bullet$};
  \node[dyellow] at (.42,.7) {\tiny $4,1$};
  \node[blue] at (.42,.45) {\tiny $2,2$};
  \node[blue] at (.42,.3) {\tiny $2,1$};
  \node[dyellow] at (.42,.15) {\tiny $4,2$};
  \node[red] at (.82,.9) {\tiny $1,3$};
  \node[dgreen] at (.82,.75) {\tiny $3,2$};
  \node[red] at (.82,.6) {\tiny $1,1$};
  \node[red] at (.82,.45) {\tiny $1,2$};
  \node[dgreen] at (.82,.3) {\tiny $3,1$};
\end{tikzpicture}
\begin{tikzpicture}[xscale=2.5,yscale=3]
  \draw[thick,dgrey] (.7,.6) -- (.7,.9);
  \node[dgrey] at (.6,.83) {\tiny $\Q^Z_1$};
  \draw[dgrey] (.82,.675) node[rotate=90] {\tiny $\prec$};
  \draw[dgrey] (.82,.825) node[rotate=90] {\tiny $\prec$};
  \node[dgrey] at (.7,.448) {$\bullet$};
  \node[dgrey] at (.6,.45) {\tiny $\Q^Z_2$};
  \draw[dgrey,thin] (.1,0.05) rectangle (.92,1);
  \draw[thick,dgrey] (.3,.3) -- (.3,.7);
  \draw[dgrey] (.42,.575) node[rotate=90] {\tiny $\prec$};
  \draw[dgrey] (.42,.375) node[rotate=90] {\tiny $\prec$};
  \node[dgrey] at (.2,.6) {\tiny $\Q^Z_3$};
  \node[dyellow] at (.3,.7) {\tiny $\bullet$};
  \node[blue] at (.3,.45) {\tiny $\bullet$};
  \node[blue] at (.3,.3) {\tiny $\bullet$};
  \node[dyellow] at (.3,.15) {\tiny $\bullet$};
  \node[red] at (.7,.9) {\tiny $\bullet$};
  \node[dgreen] at (.7,.75) {\tiny $\bullet$};
  \node[red] at (.7,.6) {\tiny $\bullet$};
  \node[red] at (.7,.45) {\tiny $\bullet$};
  \node[dgreen] at (.7,.3) {\tiny $\bullet$};
  \node[dyellow] at (.42,.7) {\tiny $4,1$};
  \node[blue] at (.42,.45) {\tiny $2,2$};
  \node[blue] at (.42,.3) {\tiny $2,1$};
  \node[dyellow] at (.42,.15) {\tiny $4,2$};
  \node[red] at (.82,.9) {\tiny $1,3$};
  \node[dgreen] at (.82,.75) {\tiny $3,2$};
  \node[red] at (.82,.6) {\tiny $1,1$};
  \node[red] at (.82,.45) {\tiny $1,2$};
  \node[dgreen] at (.82,.3) {\tiny $3,1$};
\end{tikzpicture}
\begin{tikzpicture}[xscale=2.5,yscale=3]
  \draw[thick,dgrey] (.7,.6) -- (.7,.9);
  \node[dgrey] at (.6,.83) {\tiny $\Q^Z_1$};
  \draw[dgrey] (.82,.675) node[rotate=90] {\tiny $\prec$};
  \draw[dgrey] (.82,.825) node[rotate=90] {\tiny $\prec$};
  \node[dgrey] at (.7,.448) {$\bullet$};
  \node[dgrey] at (.7,.3) {$\bullet$};
  \node[dgrey] at (.6,.45) {\tiny $\Q^Z_2$};
  \node[dgrey] at (.6,.3) {\tiny $\Q^Z_4$};
  \draw[dgrey,thin] (.1,0.05) rectangle (.92,1);
  \draw[thick,dgrey] (.3,.3) -- (.3,.7);
  \draw[dgrey] (.42,.575) node[rotate=90] {\tiny $\prec$};
  \draw[dgrey] (.42,.375) node[rotate=90] {\tiny $\prec$};
  \node[dgrey] at (.2,.6) {\tiny $\Q^Z_3$};
  \node[dyellow] at (.3,.7) {\tiny $\bullet$};
  \node[blue] at (.3,.45) {\tiny $\bullet$};
  \node[blue] at (.3,.3) {\tiny $\bullet$};
  \node[dyellow] at (.3,.15) {\tiny $\bullet$};
  \node[red] at (.7,.9) {\tiny $\bullet$};
  \node[dgreen] at (.7,.75) {\tiny $\bullet$};
  \node[red] at (.7,.6) {\tiny $\bullet$};
  \node[red] at (.7,.45) {\tiny $\bullet$};
  \node[dgreen] at (.7,.3) {\tiny $\bullet$};
  \node[dyellow] at (.42,.7) {\tiny $4,1$};
  \node[blue] at (.42,.45) {\tiny $2,2$};
  \node[blue] at (.42,.3) {\tiny $2,1$};
  \node[dyellow] at (.42,.15) {\tiny $4,2$};
  \node[red] at (.82,.9) {\tiny $1,3$};
  \node[dgreen] at (.82,.75) {\tiny $3,2$};
  \node[red] at (.82,.6) {\tiny $1,1$};
  \node[red] at (.82,.45) {\tiny $1,2$};
  \node[dgreen] at (.82,.3) {\tiny $3,1$};
\end{tikzpicture}
\begin{tikzpicture}[xscale=2.5,yscale=3]
  \draw[thick,dgrey] (.7,.6) -- (.7,.9);
  \node[dgrey] at (.6,.83) {\tiny $\Q^Z_1$};
  \draw[dgrey] (.82,.675) node[rotate=90] {\tiny $\prec$};
  \draw[dgrey] (.82,.825) node[rotate=90] {\tiny $\prec$};
  \node[dgrey] at (.7,.448) {$\bullet$};
  \node[dgrey] at (.7,.3) {$\bullet$};
  \node[dgrey] at (.3,.15) {$\bullet$};
  \node[dgrey] at (.6,.45) {\tiny $\Q^Z_2$};
  \node[dgrey] at (.6,.3) {\tiny $\Q^Z_4$};
  \draw[dgrey,thin] (.1,.05) rectangle (.92,1);
  \draw[thick,dgrey] (.3,.3) -- (.3,.7);
  \draw[dgrey] (.42,.575) node[rotate=90] {\tiny $\prec$};
  \draw[dgrey] (.42,.375) node[rotate=90] {\tiny $\prec$};
  \node[dgrey] at (.2,.6) {\tiny $\Q^Z_3$};
  \node[dgrey] at (.2,.15) {\tiny $\Q^Z_5$};
  \node[dyellow] at (.3,.7) {\tiny $\bullet$};
  \node[blue] at (.3,.45) {\tiny $\bullet$};
  \node[blue] at (.3,.3) {\tiny $\bullet$};
  \node[dyellow] at (.3,.15) {\tiny $\bullet$};
  \node[red] at (.7,.9) {\tiny $\bullet$};
  \node[dgreen] at (.7,.75) {\tiny $\bullet$};
  \node[red] at (.7,.6) {\tiny $\bullet$};
  \node[red] at (.7,.45) {\tiny $\bullet$};
  \node[dgreen] at (.7,.3) {\tiny $\bullet$};
  \node[dyellow] at (.42,.7) {\tiny $4,1$};
  \node[blue] at (.42,.45) {\tiny $2,2$};
  \node[blue] at (.42,.3) {\tiny $2,1$};
  \node[dyellow] at (.42,.15) {\tiny $4,2$};
  \node[red] at (.82,.9) {\tiny $1,3$};
  \node[dgreen] at (.82,.75) {\tiny $3,2$};
  \node[red] at (.82,.6) {\tiny $1,1$};
  \node[red] at (.82,.45) {\tiny $1,2$};
  \node[dgreen] at (.82,.3) {\tiny $3,1$};
\end{tikzpicture}
    \caption{This is how our algorithm proceeds to cover all points.}\label{fig:algorithm}
  \end{figure}

  In the case $q=1$, we have $\Sigma(\Q^Z)=\Sigma$: let $\zeta\in\R^{d-1}$ and
  let $\smash{1\le \beta,\beta'\le l(\Q^Z)}$ be two indices such that for all
  $(i,j)\in \Q_\beta$ and all $\smash{(i',j')\in \Q_{\beta'}}$ we have
  $\smash{\pr_{\zeta}(z_i^{\smash{j}})=\pr_{\zeta}(z_{i'}^{\smash{j'}})=\zeta}$;
  by construction we have
  $\smash{\pr_t(z_{i'}^{\smash{j'}})<\pr_t(z_{i}^{\smash{j}})}$ if and only if
  $\beta'>\beta$ or $\beta'=\beta$ and $(i',j')\prec_\beta (i,j)$. In particular,
  if $i=i'$, we have $\sigma_i(j)<\sigma_i(j')$ if and only if
  $\smash{\pr_t(z_{i'}^{\smash{j'}})<\pr_t(z_i^{\smash{j}})}$, and this holds if
  and only if $(i,j)\prec(i,j')$; this shows that $\Sigma(\Q^Z)=\Sigma$.

  Suppose that $\Q$ is another ray partition witnessed by $Z$, and let
  $l\coloneqq l(\Q)$. We want to show that $\omega(\Q)<\omega(\Q^Z)$ and we do
  this by showing for each $1\le\gamma\le\min(l,l^Z)$ that if
  $\smash{\Q_\beta=\Q_\beta^Z}$ for $\smash{1\le\beta <\gamma}$, then
  $\smash{\Q_\gamma\subseteq \Q^Z_\gamma}$, from which we can deduce inductively
  that since $\Q\ne\Q^Z$ by assumption, there is a $\gamma$ such that
  $\Q_\beta=\Q^Z_\beta$ for $\smash{1\le \beta<\gamma}$ and
  $\smash{\Q_\gamma\subsetneq \Q_\gamma^{Z}}$, so by definition of the
  lexicographic ordering, we get $\smash{\omega(\Q)<\omega(\Q^Z)}$.

  To do so, assume $\Q_\beta=\Q_\beta^Z$ for $\smash{1\le\beta <\gamma}$ and let
  $(i_\gamma,j_\gamma)$ be the minimum of $\smash{T_K\smallsetminus
    (\Q_1\sqcup\dotsb\sqcup \Q_{\gamma-1}) = T_K\setminus
    (\Q_1^Z\sqcup\dotsb\sqcup\Q_{\gamma-1}^Z)}$ as before. By \Th{R1}, the pair
  $(i_\gamma,j_\gamma)$ has to lie inside $\Q_\gamma$, and by \Th{R2} it is the
  minimum with respect to $\prec_\gamma$. By \Th{W1}, all $(i,j)\in \Q_\gamma$
  have to satisfy $\smash{z_{i}^{\smash j}=(\zeta_{i_\gamma}^{\smash{j_\gamma}},t)}$
  for some $t\in\R$, and by \Th{W2}, we additionally require
  $\smash{t\ge t_{i_\gamma}^{\smash{j_\gamma}}}$. Hence
  $\Q_\gamma\subseteq \Q^Z_\gamma$ as  desired.
\end{proof}

\begin{proof}[Proof of Lemma \ref{lem:filtrationclosed}]
  We show that ${\tV}\cpt\setminus F_{\lambda}$ is open. Let $\mathring
  Z\in{\tV}\cpt\setminus F_{\lambda}$, then we have $\omega(\Q^{\smash{\mathring
      Z}})<\lambda$.  Let $\varepsilon>0$ be defined as follows: we consider all
  (Euclidean) distances in $\R^{d-1}$ between any two \emph{distinct} projections
  $\smash{\pr_{\zeta}(\mathring z_{i}^{\smash j})}$ and
  $\smash{\pr_{\zeta}(\mathring z_{i'}^{\smash{j'}})}$, and also all distances in
  $\R$ between any two \emph{distinct} projections $\pr_t(\mathring z_{i}^{\smash
    j})$ and $\pr_t(\mathring z_{i'}^{\smash{j'}})$, for $(i,j)\neq (i',j')$ in
  $T_K$. We obtain a finite set of strictly positive real numbers, and
  $\varepsilon$ is defined as the minimum of all these numbers.

  Let $Z$ be any configuration in $\tV$ such that, for all $(i,j)\in T_K$,
  the distance in $\R^d$ between $\mathring z_{i}^{\smash{j}}$ and $z_{i}^{\smash j}$
  is less than $\tfrac{\varepsilon}{2}$.
  We claim that $\omega(\Q^{Z})\le \omega(\Q^{\smash{\mathring Z}})$:
  the claim implies that for $Z$ in a neighbourhood of $\mathring Z$ in ${\tV}$,
  also $Z\notin F_{\lambda}$. This would conclude the proof, as $\tV$ is open in ${\tV}\cpt$.

  To prove the claim, we use a method similar to the proof of Lemma \ref{lem:rayz},
  namely we show for each $1\le\gamma\le\min(l^{\smash{\mathring Z}},l^{Z})$ that if
  $\smash{\Q_\beta^Z=\Q_\beta^{\smash{\mathring Z}}}$ for all $1\le \beta< \gamma$,
  then $\smash{\Q_\gamma^{Z}\subseteq\Q_\gamma^{\mathring Z}}$, which
  immediately implies $\smash{\omega(\Q^{Z})\le\omega(\Q^{\mathring Z})}$.
  
  The minimum $(i_\gamma,j_\gamma)$ of $\smash{T_K\setminus(\Q^{\mathring
      Z}_1\sqcup\dotsb \sqcup \Q_{\gamma-1}^{\mathring
      Z})=T_K\setminus(\Q^{Z}_1\sqcup\dotsb\sqcup \Q_{\gamma-1}^{Z})}$ has to
  lie in both $\Q_\gamma^{Z}$ and $\Q_\gamma^{\mathring Z}$.  Now for
  $(i,j)\in T_K\setminus (\Q_1^{Z}\sqcup\dotsb\sqcup \Q_{\gamma-1}^{Z})$,
  the following holds: if $\mathring z_{i}^{\smash j}$ does not lie on the ray
  starting at $\mathring z_{i_\gamma}^{\smash {j_\gamma}}$ and running in the
  positive $t$-direction, then either $\pr_{\zeta}(\mathring
  z_{i_\gamma}^{\smash{j_\gamma}})\neq \pr_{\zeta}(\mathring z_{i}^{\smash j})$,
  or the two projections are equal but $\pr_t(\mathring z_{i_\gamma}^{\smash
    {j_\gamma}})>\pr_t(\mathring z_{i}^{\smash j})$. In both cases, by the choice of
  $\varepsilon$, we would also have that $z_{i}^{\smash j}$ does not lie on the
  ray starting at $z_{i_\gamma}^{\smash {j_\gamma}}$ and running in positive
  $t$-direction. This shows in particular that
  $\smash{\Q^{Z}_\gamma\subseteq\Q^{\mathring Z}_\gamma}$ as desired.
\end{proof}

\begin{proof}[Proof of Lemma \ref{lem:stratacontractible}]
  \enlargethispage{\baselineskip}
  Define $H_{\Q}\subseteq{\tV}$ as the subspace of configurations of the form
  $Z=(z_1^1,\dots,z_r^{\smash{k_r}})$ such that the following condition holds: for
  each $1\leq \beta\leq l$ and $(i,j)\prec_{\beta}(i',j')$, we have
  $\pr_{\zeta}(z_i^{\smash j})=\pr_{\zeta}(z_{i'}^{\smash{j'}})$ and
  $\pr_t(z_i^{\smash j})\leq\pr_t(z_{i'}^{\smash{j'}})$.  Then $H_{\Q}$ is a
  closed subspace of ${\tV}$, as it is defined by imposing some equalities and
  some weak inequalities (using `$\leq$') between the coordinates.

  Note, however, that the \emph{same} space $H_\Q$ can be defined, as a subspace
  of $\tV$, by replacing the second condition ‘$\pr_t(z_{i}^{\smash
    j})\leq\pr_t(z_{i'}^{\smash {j'}})$’ with ‘$\pr_t(z_i^{\smash
    j})<\pr_t(z_{i'}^{\smash{j'}})$’. As subspaces of $(\R^d)^{|K|}$, we then
  have the following:
  \begin{itemize}
  \item There is a linear subspace of $(\R^d)^{|K|}$ determined by
    $\pr_1(z_{i}^{\smash j}) =\pr_1(z_{i}^{\smash{j'}})$ for all $1\le i\le r$
    and $1\le j,j'\le k_i$, and inside this linear subspace, $\tV$ is open,
    defined by the strict inequalities
    $z_{i}^{\smash j}\neq z_{i'}^{\smash{j'}}$ for each
    $(i,j)\neq (i',j')\in T_K$;
  \item There is a linear subspace of $(\R^d)^{|K|}$ determined by the linear
    equations
    \begin{itemize}
    \item $\pr_1(z_{i}^{\smash j})=\pr_1(z_{i}^{\smash {j'}})$ for all
      $1\le i\le r$ and $1\le j,j'\le k_i$;
    \item $\pr_{\zeta}(z_i^{\smash j})=\pr_{\zeta}(z_{i'}^{\smash{j'}})$ for all
      $1\le\beta\le l(\Q)$ and $(i,j),(i',j')\in\Q_\beta$;
    \end{itemize}
    inside this linear subspace, $H_\Q$ is open, defined by the strict inequalities
    \begin{itemize}
    \item $z_{i}^{\smash j}\neq z_{i'}^{\smash {j'}}$ for $(i,j)\neq (i',j')\in T_K$;
    \item $\pr_t(z_i^{\smash j})<\pr_t(z_{i'}^{\smash {j'}})$ for all $1\le\beta\le l(\Q)$
      and $(i,j)\prec_{\beta}(i',j')\in\Q_\beta$.
    \end{itemize}
  \end{itemize}
  It follows that $H_\Q$ is an orientable submanifold without boundary of $\tV$,
  which in turn is an orientable submanifold without boundary of $(\R^d)^{|K|}$:
  here we are using the simple observation that the intersection inside a real
  vector space of an open subset and a linear subspace is an orientable
  submanifold without boundary.
  The dimension of $H_\Q$ is computed by noting that, locally, we have the
  following parameters describing a configuration $Z\in H_\Q$.
  \begin{itemize}
  \item For all $1\leq \beta\leq l$, we have a parameter
    $\zeta_\beta=(\zeta^1_\beta,\zeta^2_\beta) \in \R^p\times \R^{q-1}$ which
    corresponds to the (unique) value attained by
    $\pr_{\zeta}(z_{i}^{\smash j})$ for all $(i,j)\in\Q_{\beta}$. However, if
    two rays $\Q_\beta$ and $\Q_{\beta'}$ share a cluster, their further
    projections $\zeta^1_{\smash \beta}$ and $\zeta^1_{\smash{\beta'}}$ have to
    coincide inside $\R^p$. Hence, we get for each equivalence class of rays a
    choice in $\R^p$, and for each ray a choice in $\R^{q-1}$. This yields
    $p\cdot a(\Q) + (q-1)\cdot l(\Q)$ parameters in $\R$.
  \item For each $(i,j)\in T_K$ we have a parameter
    $t_i^{\smash j}=\pr_t(z_i^{\smash j})$ in $\R$.
  \end{itemize}
  We clearly have $W_\Q\subseteq H_\Q$, and ${W}_{\Q}$ can be characterised as
  the subspace of $H_\Q$ containing configurations $Z$ for which the following
  condition holds: for all $1\leq \beta<\beta'\leq l$ and for all
  $(i,j)\in\Q_{\beta}$ and $(i',j')\in\Q_{\beta'}$, either
  $\pr_{\zeta}(z_{i}^{\smash j})\neq \pr_{\zeta}(z_{i'}^{\smash {j'}})$ or
  $\pr_t(z_{i'}^{\smash {j'}}) <\pr_t(z_{i}^{\smash j})$. Thus,
  $W_\Q\subseteq H_\Q$ is an open subspace (it is a finite intersection of open
  subspaces). To see that $W_\Q$ is dense in $H_\Q$, note that one can slightly
  perturb all parameters of any configuration $Z\in H_\Q$ of the forms
  \begin{itemize}
  \item $\zeta_\beta^2\in \R^{q-1}$, for $1\le \beta\le l(\Q)$;
  \item $\zeta_\beta^1\in\R^p$, for $\beta$ ranging in a set of representatives
    of the $a(\Q)$ equivalence classes of rays,
  \end{itemize}
  to ensure that they attain all different values, so that the new perturbed
  con\-figuration $Z'$ lies in $W_\Q$.  Hence, $H_\Q$ is the closure $\ol{W}_\Q$
  inside $\tV$.

  To prove that ${W}_\Q$ is contractible, we choose distinct numbers
  $\mathring t^{\, 1}_1,\dotsc \mathring t_{r}^{\, k_r}\in \R$ such that for
  $1\le \beta,\beta'\le l$ and $(i,j)\in\Q_\beta$ and $(i',j')\in\Q_{\beta'}$,
  we have $\mathring t_{i'}^{\smash {j'}} <\mathring t_{i}^{\smash j}$ if and
  only if $\beta'>\beta$ or $\beta'=\beta$ and $(i',j') \prec_\beta (i,j)$,
  i.e.\ the $\mathring t_{i}^{\smash j}$ are ordered exactly as the stacked
  order on $\Q_l\sqcup\dotsb\sqcup \Q_1$ from Definition \ref{defi:stackedprec}
  prescribes.
  
  We define $\mathring{z}_{i}^{\smash j}\coloneqq (0,\mathring{t}_{i}^{\smash j})\in\R^d$
  for all $(i,j)\in T_K$; note that the configuration
  $\mathring{Z}=(\mathring{z}_{1}^1,\dotsc, \mathring{z}_{r}^{k_r})$ lies in
  ${W}_\Q$. We can connect any configuration $Z\in W_\Q$ to $\mathring{Z}$ by
  \emph{linear interpolation} inside $(\R^d)^{|K|}$, as shown in Figure \ref{fig:linint}: for all $0\leq s\leq 1$
  we consider the configuration $s\cdot Z+(1-s)\cdot \mathring{Z}$, where we set
  \[(s\cdot Z+(1-s)\cdot \mathring{Z})_{i}^{\smash j}\coloneqq s\cdot z_{i}^{\smash j}
    +(1-s)\cdot \mathring{z}_{i}^{\smash j}\in\R^d.\]
  \begin{figure}
    \centering
    \begin{tikzpicture}[xscale=5,yscale=3.6]
  \node [thin,grey] at (2.35,1.2) {\tiny $\R^3$};
  \draw [very thin,bgrey] (0,0) -- (.6,.4) -- (2.3,.4);
  \draw [very thin,bgrey]  (.6,.4) -- (.6,1.2);
  \draw [blila] (.2,0) -- (.77,.4) -- (.77,1.2);
  \draw [blila] (.65,0) -- (1.1525,.4) -- (1.1525,1.2);
  \draw [blila] (1.6,0) -- (1.96,.4) -- (1.96,1.2);
  \draw [thin,grey] (2,0) -- (2.3,.4) -- (2.3,1.2) -- (2,1);
  \draw [thin,grey] (0,1) -- (.6,1.2) -- (2.3,1.2);
  \draw [dgrey,dgrey] (1.225,1.1) -- (1.225,.15);
  \draw [very thick, white] (.55,.7) -- (1.225,.25);
  \draw [thin, byellow] (.55,.7) -- (1.225,.25);
  \draw [very thick,white] (.3,.5) -- (1.225,.35);
  \draw [thin, byellow] (.3,.5) -- (1.225,.35);
  \draw [thin, bgreen] (1.7,.7) -- (1.225,.65);
  \draw [very thick, white] (1.7,.35) -- (1.225,.55);
  \draw [thin, bgreen] (1.7,.35) -- (1.225,.55);
  \draw [very thick,white] (1.05,.85) -- (1.225,.47);
  \draw [thin, bblue] (1.05,.85) -- (1.225,.47);
  \draw [very thick, white] (.8,.5) -- (1.225,.75);
  \draw [thin, bred] (.8,.5) -- (1.225,.75);
  \draw [very thick, white] (.8,.7) -- (1.225,.85);
  \draw [thin, bblue] (.8,.7) -- (1.225,.85);
  \draw [very thick,white] (.8,.8) -- (1.225,1.05);
  \draw [thin, bred] (.8,.8) -- (1.225,1.05);
  \draw [lila] (.65,0) -- (.65,1) -- (1.1525,1.2);
  \draw [lila] (1.6,0) -- (1.6,1) -- (1.96,1.2);
  \draw [lila] (.2,0) -- (.2,1) -- (.77,1.2);
  \draw [thin,grey] (0,0) rectangle (2,1);
  \draw [very thick,dgrey] (1.225,.65) -- (1.225,.55);
  \draw [very thick,dgrey] (1.225,1.05) -- (1.225,.75);
  \node[dgrey] at (1.32,1.145) {\tiny $(0,0)\times \R$};
  \node [dred] at (1.225,1.05) {\tiny $\bullet$};
  \node [blue] at (1.225,.85) {\tiny $\bullet$};
  \node [dred] at (1.225,.75) {\tiny $\bullet$};
  \node [dgreen] at (1.225,.65) {\tiny $\bullet$};
  \node [dgreen] at (1.225,.55) {\tiny $\bullet$};
  \node [blue] at (1.225,.47) {\tiny $\bullet$};
  \node [dyellow] at (1.225,.35) {\tiny $\bullet$};
  \node [dyellow] at (1.225,.25) {\tiny $\bullet$};
  \node [dred] at (.8,.5) {\tiny $\bullet$};
  \node [blue] at (.8,.7) {\tiny $\bullet$};
  \node [dred] at (.8,.8) {\tiny $\bullet$};
  \node [blue] at (1.05,.85) {\tiny $\bullet$};
  \node [dyellow] at (.3,.5) {\tiny $\bullet$};
  \node [dyellow] at (.55,.7) {\tiny $\bullet$};
  \node [dgreen] at (1.7,.7) {\tiny $\bullet$};
  \node [dgreen] at (1.7,.35) {\tiny $\bullet$};
  \node [dred] at (.752,.5) {\tiny $1,1$};
  \node [dyellow] at (.5,.7) {\tiny $4,2$};
  \node [dyellow] at (.25,.5) {\tiny $4,1$};
  \node [blue] at (1,.85) {\tiny $3,2$};
  \node [blue] at (.752,.7) {\tiny $3,1$};
  \node [dred] at (.752,.8) {\tiny $1,2$};
  \node [dgreen] at (1.75,.7) {\tiny $2,2$};
  \node [dgreen] at (1.75,.35) {\tiny $2,1$};
  \node[dgrey] at (1.27,.25) {\tiny $\Q_5$};
  \node[dgrey] at (1.27,.35) {\tiny $\Q_4$};
  \node[dgrey] at (1.27,.47) {\tiny $\Q_3$};
  \node[dgrey] at (1.27,.6) {\tiny $\Q_2$};
  \node[dgrey] at (1.27,.9) {\tiny $\Q_1$};
\end{tikzpicture}
    \caption{The linear interpolation from $Z$ to the configuration $\mathring{Z}$
      where all points are on the line $(0,0)\times\R$ and the components of the
      ray partition are ordered.}\label{fig:linint}
  \end{figure}
  Since $\pr_{\zeta}$ is a linear map, for all $0\leq s\leq 1$ we have that, for
  fixed $1\leq \beta\leq l$, the map $\pr_{\zeta}$ attains the same value on all
  points of the form
  $s\cdot z_i^{\smash j}+(1-s) \cdot \mathring{z}_{i}^{\smash j}$, for $(i,j)$
  ranging in $\Q_{\beta}$; similarly, for all
  $(i,j)\prec_{\beta}(i',j')\in\Q_{\beta}$ we have an inequality
  $\pr_t((s\cdot Z+(1-s)\cdot \mathring{Z})_{i}^{\smash{j}})<\pr_t((s\cdot
  Z+(1-s)\cdot \mathring{Z} )_{i'}^{\smash {j'}})$.  This means that the linear
  interpolation takes place inside the subspace $\ol{W}_{\Q}$.
  
  With some more effort one can show that the linear interpolation takes place
  inside ${W}_\Q$ by using the characterisation of the points of ${W}_\Q$ inside
  $\ol{W}_\Q$: for all $0<s\leq 1$, $1\leq \beta<\beta'\leq l$,
  $(i,j)\in\Q_{\beta}$, and $(i',j')\in\Q_{\beta'}$, if
  $\pr_{\zeta}(z_{i}^{\smash j})\neq \pr_{\zeta}(z_{i'}^{\smash{j'}})$ then we
  also have the inequality
  \begin{align*}
    \pr_{\zeta}\pa{\big(s\cdot Z+(1-s)\cdot \mathring{Z} \big){}_{i}^{j}}
    \ne\pr_{\zeta}\pa{ \big(s\cdot Z+(1-s)\cdot \mathring{Z} \big){}_{i'}^{j'}},
  \end{align*}
  and if $\pr_t(z_{i'}^{\smash{j'}})<\pr_t(z_{i}^{\smash j})$ then also
  \[\pr_t\pa{\big(s\cdot Z+(1-s)\cdot \mathring{Z} \big){}_{i}^{j}}
    < \pr_t\pa{\big(s\cdot Z+(1-s)\cdot \mathring{Z} \big){}_{i'}^{j'}}, \]
  by the same argument used above. At time $s=1$, we already know that $z$ lies in
  $W_\Q$. Thus, we have exhibited a contraction of $W_\Q$ onto its point $\mathring{Z}$.
  
  In particular we have shown that $W_\Q$ is a connected component of
  $\fF_\nu$. All connected components arise in this way, since every point $Z$
  in $\fF_\nu$ belongs to the subspace $W_{\Q^Z}$, with
  $\chi(\nu)=\omega(\Q^Z)$, see Definition \ref{defi:rayfiltration}.
\end{proof}

\subsection{Growth of Betti numbers and cup product indecomposables}
We will not attempt to give a precise description of $H^*(\tV_K(\R^{p,q}))$
as a ring. The aim of this short subsection is to disprove a natural, yet naïve
conjecture on multiplicative generators of $H^*(\tV_K(\R^{p,q}))$.

\begin{nota}
  For all $1\le i<j\le r$ there is a Fadell–Neuwirth map of the form
  $\pr_{i,j}\colon \tV_K(\R^{p,q})\to\tV_{(k_i,k_{j})}(\R^{p,q})$ which
  forgets all clusters but the $i$\textsuperscript{th} and
  $j$\textsuperscript{th} ones. The map $\pr_{i,j}$ is in general not a
  fibration, though it is a fibration in the quite special case in which $k_l=1$
  for all $l\neq i,j$.
\end{nota}

In the case $k_1,\dotsc,k_r=1$, the space $\tV_K(\R^{p,q})$ is homeomorphic
to the classical ordered configuration space $\tilde{C}_r(\R^d)$, and the maps
$\pr_{i,j}$ reduce to a version of the classical Fadell–Neuwirth fibrations
$\pr_{i,j}\colon \tilde{C}_r(\R^d)\to \tilde{C}_2(\R^d)$. Denote by
$\theta\in H^{d-1}(\tilde{C}_2(\R^d))\cong \Z$ a generator, and let
$\theta_{i,j}\coloneqq\pr_{i,j}^*\theta\in H^{d-1}(\tilde{C}_r(\R^d))$ be the
pulled back cohomology class. It is then a classical result by Arnol'd
\cite{arnold} that the classes $\theta_{i,j}$ generate $H^*(\tilde{C}_r(\R^d))$
as a ring. A natural conjecture would then be the following:
\begin{conj}[Naïve conjecture]
  \label{conj:naive}
  The ring $H^*(\tV_K(\R^{p,q}))$ is generated in arity 2, i.e.\
  by all cohomology classes that can be obtained as a pullback along
  $\smash{\pr_{i,j}}$, for some $1\le i<j\le r$, from a cohomology class in
  $\smash{H^*(\tV_{(k_i,k_{j})}(\R^{p,q}))}$.
\end{conj}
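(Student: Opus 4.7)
The plan is to disprove the conjecture by exhibiting explicit triples $(p,q,\bfk)$ for which the image of the arity-$2$ cup-product map
\[
  \bigotimes_{1\le i<j\le r} H^*\!\pa{\tV_{(k_i,k_j)}(\R^{p,q})} \;\xrightarrow{\;\bigcup\pr_{i,j}^*\;}\; H^*\!\pa{\tV_{\bfk}(\R^{p,q})}
\]
is not surjective. The main tool is \textsc{Theorem~\ref{theo:hom}}, which provides a ray-partition basis of both source and target and, in particular, a closed formula for their Poincaré polynomials; so the failure of surjectivity can in principle be reduced to a combinatorial dimension count.

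Concretely, I would first write, for every partition $\bfl$, the Poincaré polynomial
\[
  P_{\bfl}(t) \;=\; \sum_{\Q\text{ ray partition of }\ind_{\bfl}} t^{p\,(r(\bfl)-a(\Q))\,+\,(q-1)(|\bfl|-l(\Q))}.
\]
Since the image of the arity-$2$ map is contained in the subalgebra generated by the pullbacks $\pr_{i,j}^*H^*(\tV_{(k_i,k_j)}(\R^{p,q}))$, its total Hilbert series is bounded degree-wise by $\prod_{1\le i<j\le r} P_{(k_i,k_j)}(t)$. It then suffices to locate one degree where the coefficient of $P_{\bfk}(t)$ strictly exceeds that of the product.

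The concrete candidate I would try first is $\bfk=(k,k,k)$ for $k\ge 2$ and $p\ge 1$ (the case $p=0$ is excluded because then $\tV_\bfk(\R^{0,q})=\tilde{C}_{|\bfk|}(\R^q)$, which \emph{is} generated in arity $2$ by Arnol'd's classical theorem). Here the ray partitions with $a(\Q)=1$ that contain a ray simultaneously hitting all three clusters witness a genuine triple interaction between the three clusters; on the geometric side, any pullback $\pr_{i,j}^*\alpha$ is supported on configurations in which only the $i$-th and $j$-th clusters are constrained, so one expects such ``triple-interaction'' classes to give indecomposables outside the arity-$2$ subalgebra. If the direct count for $(p,q)=(1,1)$ and $\bfk=(2,2,2)$ does not suffice, I would pass to $(p,q,\bfk)=(1,1,(k,k,k))$ for larger $k$, or increase $r$; the ray-partition count on the left grows factorially in $|\bfk|$, whereas the bound $\prod_{i<j}P_{(k_i,k_j)}(t)$ grows only polynomially in $r$ when the $P_{(k_i,k_j)}(t)$ are fixed, so asymptotic surjectivity must fail.

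The main obstacle is the last step: a crude tensor-product bound on the arity-$2$ image is easy, but relations among products of pullbacks might shrink this bound, while they might also, through the ring structure, actually generate more classes than naively expected. I therefore expect the clean argument to come from pinning down a small explicit case (most likely $\bfk=(2,2,2)$ with $p\ge1$ or $\bfk=(k,k,k)$ for the first $k\ge 2$ where the count already fails) and verifying the mismatch by hand using the ray-partition basis; carrying out this verification, and ruling out that the arity-$2$ cup products span more than the naive count, is the technical heart of the argument.
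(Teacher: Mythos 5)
Your approach is essentially the one the paper takes: with $\bfk=(k,k,k)$, $p\ge1$, $q=1$, restricted to a single component $\uid\in(\frS_k)^3$, \textsc{Theorem~\ref{theo:hom}} gives Betti numbers $b_p=3\bigl(\tbinom{2k}{k}-1\bigr)$ and $b_{2p}=\tbinom{3k}{k}\tbinom{2k}{k}-3\tbinom{2k}{k}+2$, and since $H^{2p}$ of each arity-$2$ source $\tV_{(k,k)}(\R^{p,1})_\uid$ vanishes, the conjecture would force $b_{2p}\le b_p^2$, which fails for large $k$ by Stirling (the left side grows like $27^k$, the right like $16^k$). Two small corrections to your sketch: your first candidate $\bfk=(2,2,2)$ does not yet violate this inequality (the first failure occurs at $k=5$), so the passage to large $k$ is genuinely needed; and your fallback of growing $r$ at fixed $k$ is less clear-cut, since the tensor-product upper bound $\prod_{i<j}P_{(k,k)}(t)$ also has coefficients growing exponentially in $\tbinom{r}{2}$, so it is not immediate that it eventually drops below the ray-partition count in a relevant degree.
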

The following example shows that Conjecture \ref{conj:naive} is wrong in general.

\begin{expl}
  Consider the case $r=3$, $k\ge2$, and $p\ge1$ and select the com\-ponent of
  $\smash{\tV_3^k(\R^{p,1})=\tV_{(k,k,k)}(\R^{p,1})}$ corresponding to
  $\Id \coloneqq (\id,\id,\id) \in (\frS_k)^3$.

  Via the stacked total order from
  Definition \ref{defi:stackedprec}, a ray partition $\caQ$ of type
  $K$ with $\Sigma(\caQ)=\on{Id}$ is the same as a \emph{shuffle} of the
  columns of $T_K$, i.e.\ a total order $\prec$ on $T_K$ which preserves the
  ordering of each column: for the inverse construction, given such a shuffle
  $\prec$, we let $\caQ_1$ be the subset of $T_K$ containing $(1,1)$ and all
  $\prec$-larger elements, and $\caQ_\beta$ be the subset of
  $T_K\setminus (\caQ_{1}\sqcup\dotsb\sqcup\caQ_{\beta-1})$ containing the
  $<$-minimal element and all $\prec$-larger ones.
 
  By Theorem \ref{theo:hom}, $\smash{H^*(\tV^k_3(\R^{p,1})_{\Id})}$ is
  concentrated in degrees $0$, $p$ and $2p$, with Betti numbers equal,
  respectively, to the following:
  \begin{itemize}
  \item $1$ in degree $0$. There is indeed a unique ray partition of
    agility $3$, having three rays containing each one cluster.
  \item $3\cdot (\tbinom{2k}k-1)$ in degree $p$. To count ray partitions of
    agility 2, we first choose which of the three clusters forms on its own an
    equivalence class according to Notation \ref{nota:lengthagility}. Without loss
    of generality, we assume to have selected the third cluster to stay on its
    own. The other two clusters can either form a single ray with minimum $(1,1)$,
    for which there are $\binom{2k-1}{k-1}$ possibilities, or they are divided into
    two rays which are equivalent: this second case corresponds to a shuffle of the
    columns $\set{(1,1),\dots,(1,k)}$ and $\set{(2,1),\dots,(2,k)}$, beginning with
    $(2,1)$ and different from the shuffle
    $(2,1)\prec\dotsb\prec(2,k)\prec(1,1)\prec\dotsb\prec(1,k)$; there are
    $\binom{2k-1}k-1$ possibilities for such a shuffle.
  \item $\tbinom{3k}{k}\cdot \tbinom{2k}{k} -3\cdot\tbinom{2k}{k}+2$ in degree
    $2p$. There are precisely $\tbinom{3k}{k}\cdot \tbinom{2k}{k}$
    shuffles of the three columns, and $3\cdot\tbinom{2k}{k}-2$ of these
    shuffles correspond to ray partitions already considered before.
  \end{itemize}
  Similarly, $H^*(\tV^k_2(\R^{p,1})_{\Id})$ is concentrated in degrees $0$ and
  $p$, with Betti numbers equal, respectively, to $1$ and $\binom{2k}k-1$.  The
  projections $\pr_{1,2},\pr_{1,3},\pr_{2,3}$ exhibit an isomorphism between
  $H^p(\tV^k_3(\R^{p,1})_{\Id})$ and
  $H^p(\tV^k_2(\R^{p,1})_{\Id})^{\oplus 3}$.

  If Conjecture \ref{conj:naive} were true, the set of all cup products
  of pairs of classes in $H^p(\tV^k_3(\R^{p,1})_{\Id})$ would suffice to
  generate the entire cohomology group $H^{2p}(\tV^k_3(\R^{p,1})_{\Id})$; in
  particular we would have
  \[\tbinom{3k}{k}\cdot \tbinom{2k}{k} -3\cdot\tbinom{2k}{k}
    +2\le \pa{3\cdot \pa{\tbinom{2k}k-1}}^2.\]
  However, using Stirling's approximation, the left hand side grows as fast as
  $\smash{27^k\cdot \frac{\sqrt{3}}{2\pi k}}$ for $k\to\infty$, whereas the right
  hand side grows as fast as $\smash{9\cdot 16^k\cdot\frac{1}{\pi k}}$.  Hence for
  large $k$ the inequality does not hold; in particular the graded ring
  $H^*(\tV^k_3(\R^{p,1})_{\Id})$ has non-trivial indecomposable elements in
  degree $2p$.
  
  The example generalises for fixed $r\ge4$ to show that the rank of the
  inde-\linebreak composables of $\smash{H^*(\tV^k_r(\R^{p,1})_{\Id})}$ in degree
  $(r-1)\cdot p$ grows as fast as $r^{r\cdot k}$.
\end{expl}
We do not expect that the situation becomes better when considering more than
three clusters, or taking a value of $q$ higher than $1$.

\section{Homological stability}
\label{sec:stability}
In this section we will prove homological stability for the unordered
configuration spaces $V^k_r(\R^{p,q})$ of vertical clusters of size $k$ for all
values of $p$ and $q$ except for the one pair where it obviously does not
hold. This extends results by \cite{latifi}, \cite{tran} and \cite{palmerdis}.

\subsection{Setting and results}

We fix throughout the section a cluster size $k\ge 1$ and we will abbreviate
$\tV_r(\R^{p,q})\coloneqq \tV_r^k(\R^{p,q})$ and
$V_r(\R^{p,q})\coloneqq V_r^k(\R^{p,q})$. If $p$ and $q$ are fixed and clear
from the context, we may also just write $\tV_r$ resp. $V_r$.

\begin{constr}
  For each $r\ge 0$, $p\ge 0$, and $q\ge 1$, we have \emph{stabilisation maps}
  \[\stab\colon V^k_r(\R^{p,q})\to V^k_{r+1}(\R^{p,q})\]
  by adding an extra cluster on the ‘far right’ with respect to the first coordinate
  of $\R^{p+q}$, as depicted in Figure \ref{fig:stabilisation}.
\end{constr}

\begin{figure}[h]
  \centering
  \begin{tikzpicture}[scale=3]
  \draw[thin,dgrey] (0,0) rectangle (1,1);
  \draw[dgrey,thick] (.7,.7) -- (.7,.5);
  \draw[dgrey,thick] (.7,.3) -- (.7,.2);
  \draw[dgrey,thick] (.4,.87) -- (.4,.35);
  \draw[dgrey,thick] (.3,.8) -- (.26,.8) -- (.26,.4) -- (.3,.4);
  \draw[dgrey,thick] (.3,.6) -- (.34,.6) -- (.34,.3) -- (.3,.3);
  \node at (.3,.3) {\tiny $\bullet$};
  \node at (.3,.4) {\tiny $\bullet$};
  \node at (.3,.6) {\tiny $\bullet$};
  \node at (.3,.8) {\tiny $\bullet$};
  \node at (.7,.5) {\tiny $\bullet$};
  \node at (.7,.7) {\tiny $\bullet$};
  \node at (.7,.2) {\tiny $\bullet$};
  \node at (.7,.3) {\tiny $\bullet$};
  \node at (.4,.87) {\tiny $\bullet$};
  \node at (.4,.35) {\tiny $\bullet$};
\end{tikzpicture}\quad\raisebox{38px}{$\mapsto$}\quad
\begin{tikzpicture}[scale=3]
  \draw[thin,bgrey] (1,0) -- (1,1);
  \draw[thin,dgrey] (0,0) rectangle (1.5,1);
  \draw[dgrey,thick] (.7,.7) -- (.7,.5);
  \draw[dgrey,thick] (.7,.3) -- (.7,.2);
  \draw[dgrey,thick] (.4,.87) -- (.4,.35);
  \draw[dgrey,thick] (.3,.8) -- (.26,.8) -- (.26,.4) -- (.3,.4);
  \draw[dgrey,thick] (.3,.6) -- (.34,.6) -- (.34,.3) -- (.3,.3);
  \draw[bblue,thick] (1.25,.4) -- (1.25,.6);
  \node at (.3,.3) {\tiny $\bullet$};
  \node at (.3,.4) {\tiny $\bullet$};
  \node at (.3,.6) {\tiny $\bullet$};
  \node at (.3,.8) {\tiny $\bullet$};
  \node at (.7,.5) {\tiny $\bullet$};
  \node at (.7,.7) {\tiny $\bullet$};
  \node at (.7,.2) {\tiny $\bullet$};
  \node at (.7,.3) {\tiny $\bullet$};
  \node at (.4,.87) {\tiny $\bullet$};
  \node at (.4,.35) {\tiny $\bullet$};
  \node[dblue] at (1.25,.4) {\tiny $\bullet$};
  \node[dblue] at (1.25,.6) {\tiny $\bullet$};
\end{tikzpicture}
  \caption{The stabilisation map $\stab\colon V^2_5(\R^{1,1})\to V^2_6(\R^{1,1})$,
    which adds a new cluster on the far right.}\label{fig:stabilisation}
\end{figure}

\begin{prop}
  The induced maps in homology
  \[\stab_*\colon H_m(V_r(\R^{p,q}))\to H_m(V_{r+1}(\R^{p,q})).\]
  are split monic for each $m,r,p\ge 0$ and $q\ge 1$.
\end{prop}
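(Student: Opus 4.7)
The plan is to realise $\stab$, up to homotopy, as the inclusion of a factor into a product obtained via a juxtaposition map, and then to split this composition using a scanning-style retraction.

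\emph{Juxtaposition.} Pick strictly increasing diffeomorphisms $\alpha\colon\R\to(-\infty,0)$ and $\beta\colon\R\to(0,\infty)$, and define
\[
  \mu\colon V^k_r(\R^{p,q})\times V^k_1(\R^{p,q})\longrightarrow V^k_{r+1}(\R^{p,q})
\]
by applying $\alpha$ (resp.\ $\beta$) to the first coordinate of every point in a configuration from the first (resp.\ second) factor, and then taking the union. Disjointness is automatic because the two groups of clusters lie in the separated open half-spaces $\set{\pr_1<0}$ and $\set{\pr_1>0}$. The stabilisation map $\stab$ is homotopic to $x\mapsto \mu(x,x_0)$ for any fixed basepoint $x_0\in V^k_1(\R^{p,q})$.

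\emph{Scanning retraction on an open subspace.} Identify the image of $\mu$ with the open subspace $U\subseteq V^k_{r+1}(\R^{p,q})$ of configurations admitting a hyperplane $\set{\pr_1=c}$ which separates one cluster from the remaining $r$ clusters. The map $\mu$ is a homotopy equivalence onto $U$: an inverse scans for the largest gap in $\pr_1$-values of clusters and reads off the two halves. Composing the inverse with the projection $\mathrm{pr}_1\colon V_r\times V_1\to V_r$ yields a continuous retraction $\rho\colon U\to V_r$ satisfying $\rho\circ\stab\simeq\mathrm{id}_{V_r}$.

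\emph{Global splitting.} To upgrade $\rho$ to a splitting of $\stab_*$ on all of $H_*(V^k_{r+1}(\R^{p,q}))$, I would exhibit a deformation retraction of $V^k_{r+1}(\R^{p,q})$ onto $U$: the complement $V_{r+1}\setminus U$ consists of configurations whose cluster $\pr_1$-coordinates are not sufficiently separated, and a canonical flow that spreads these values apart -- using the ambient $\R^p$-directions when $p\ge 1$, or a scanning in $\R^q$ when $p=0$ -- retracts everything into $U$ while restricting to the identity on the image of $\stab$.

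The \textbf{main obstacle} is the last step: constructing the deformation retraction so that it is continuous and compatible with the unordered quotient by $\frS^k_{r+1}=\frS_k\wr\frS_{r+1}$. One clean alternative that bypasses this flow argument uses the cellular description of \textsc{Section~\ref{sec:orderedHomology}}: in the ordered case, $\stab$ sends a ray partition $\Q$ to its extension $\Q^+$ obtained by appending the new cluster as a single additional ray indexed last, giving an explicit split injection of the ray-partition bases of $H^*(\tilde V^k_r)$; transferring this splitting down the covering $\tilde V^k_r\to V^k_r$ (and exploiting the equivariance of the basis under $\frS^k_{r+1}$) should then produce a splitting of $\stab_*$ on the unordered configuration spaces as well.
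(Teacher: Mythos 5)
Your main approach is genuinely different from the paper's, but it has a fatal gap that you yourself flag. The step that fails is the "global splitting": there cannot be a deformation retraction of $V^k_{r+1}(\R^{p,q})$ onto the open subspace $U$ of configurations admitting a separating hyperplane. If such a retraction existed, then $H_*(V^k_{r+1})\cong H_*(U)\simeq H_*(V^k_r\times V^k_1)\cong H_*(V^k_r)$ (since $V^k_1(\R^{p,q})$ is contractible), which would force $\stab_*$ to be an isomorphism in \emph{all} degrees for all $r$ — contradicting the fact that the Betti numbers of $V^k_r$ do grow with $r$. The inclusion $U\hookrightarrow V^k_{r+1}$ is a genuine open inclusion with nontrivial complement (configurations where every cluster is, in some chain of entanglements/alignments, linked to every other), and its homotopy cofibre is precisely where the new homology lives. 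Your "scanning for the largest gap" map is also not continuous on $U$: configurations near the locus where two distinct gaps tie for largest produce a discontinuity. The alternative you sketch via ray partitions is likewise incomplete: \textsc{Theorem~\ref{theo:hom}} computes $H^*$ of the \emph{ordered} spaces $\tilde V_\bfk$, and an $\frS_\bfk$-equivariant basis upstairs does not in general descend to a basis (or even a generating set) of the integral cohomology of the free quotient; one would need a genuine transfer argument, which is exactly what the paper's proof provides in a different form.

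The paper's actual proof uses Dold's criterion \cite{dolddec}: one constructs power-set maps $\gamma_{j,r}\colon V_r\to\mathrm{SP}^{\binom{r}{j}}V_j$ sending a configuration of $r$ clusters to the formal sum (in the symmetric product) of all sub-configurations of $j$ clusters, checks the relation $\gamma_{j,r+1}\circ\stab\simeq \gamma_{j,r}\,{\color{black}+}\,\mathrm{SP}^{\binom{r}{j-1}}(\stab)\circ\gamma_{j-1,r}$, and then applies $\pi_m\circ\mathrm{SP}^\infty\cong\tilde H_m$ to produce morphisms $\tau_{j,r}\colon \tilde H_m(V_r)\to\tilde H_m(V_j)$ satisfying Dold's hypotheses. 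This is a transfer-type argument that works in all degrees precisely because it does not try to split $\stab$ at the space level — it splits only after passing to homology. If you want to salvage your geometric picture, you should abandon the hoped-for retraction $V_{r+1}\to U$ and instead think of the $\gamma_{j,r}$ as a formal, multi-valued replacement for "scanning out one cluster"; that is the correct notion of "forgetting a cluster" in the unordered setting.
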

\begin{proof}
  This proof generalises the one from \cite[Lem.\,5.1]{palmerdis} which uses the
  classical idea of a ‘power-set map’: from \cite{dolddec}, we want to use the
  following
  \begin{quote}
    \mbox{\textbf{Lemma~2.}} \emph{Suppose we are given a sequence
      $(0=A_0\stackrel{s_0}{\to} A_1\stackrel{s_1}{\to} \dotsb)$ of abelian groups,
      and assume that there are $\tau_{j,r}:A_r\to A_j$ for $1\le j\le r$ such that
      $\tau_{r,r}=\on{id}$ and $\tau_{j,r}-\tau_{j,r+1}\circ s_r:A_r\to A_j$ lies in
      the image of $s_{j-1}$. Then every $s_r$ is split monic.}
  \end{quote}
  In order to do so, we first note that $V_0=*$, so all spaces
  $V_r=V_r(\R^{p,q})$ are canonically based by $\stab^{r}(*)\in V_r$, and the
  stabilisation maps are basepoint-preserving by definition. For a fixed
  $m\ge 0$, let $A_r\coloneqq \tilde{H}_m(V_r)$, so we have maps
  $s_r\colon A_r\to A_{r+1}$ induced by the stabilisation.

  Now recall for $\ell\ge 0$ the $\ell$-fold symmetric product
  $\on{SP}^\ell V_r\coloneqq (V_r)^\ell/\frS_\ell$ where $\frS_\ell$ acts by
  coordinate permutation. We will denote elements of $\on{SP}^\ell V_r$ as
  formal sums of elements of $V_r$, but we will use a sign
  $\mathring{+}$ resp. $\mathring{\sum}$ in order to distinguish
  the notation for the symmetric product from Notation
  \ref{nota:suggestivesum}. For the binomial coefficient
  $\ell\coloneqq\tbinom{r}{j}$, consider the maps
  \begin{align*}
    \gamma_{j,r}\colon V_r\to \on{SP}^\ell V_j,\quad
    \sum_{i=1}^r[z_i] \mapsto {{\sum^{\circ}_{\substack{S\subseteq\{1,\dotsc,r\}\\\# S=j}}}}
    \sum_{i\in S} [z_i].
  \end{align*}
  A priori, $\gamma_{j,r}$ is not based, but it can be homotoped to a based map
  since $V_r$ is well-based. Then $\gamma_{r,r}=\on{id}$ and we have a homotopy
  \[\gamma_{j,r+1}\circ \on{stab}_r\simeq
    \gamma_{j,r}{~{\mathring{+}}~}
    {\textstyle\on{SP}^{\smash{\tbinom{r}{j-1}}}}\pa{\on{stab}_{j-1}} \circ
    \gamma_{j-1,r}.\]
  of maps $V_r\to\on{SP}^{\binom{r+1}{j}}V_j$. Applying the functor $\pi_m\circ
  \on{SP}^\infty\cong \tilde{H}_m$ and using the ‘flattening’ map
  $\varphi_\ell\colon \on{SP}^\infty\on{SP}^\ell V_r\to \on{SP}^\infty V_r$, we
  obtain the desired system $(\tau_{j,r})$ of morphisms for Dold’s lemma by
  \[
    \begin{tikzcd}[column sep=5em, row sep=1.2em]
      A_r\ar[d,equal]\ar[rr,dashed,"\tau_{j,r}"] && A_j\ar[d,equal]\\
      \pi_m\pa{\on{SP}^\infty V_r}\ar{r}[swap]{\pi_m\pa{\on{SP}^\infty \gamma_{i,r}}} &
      \pi_m\pa{\on{SP}^\infty\on{SP}^\ell V_j}\ar{r}[swap]{\pi_m(\varphi_\ell)} &
      \pi_m\pa{\on{SP}^\infty V_j}.
    \end{tikzcd}\qedhere
  \]
\end{proof}

In contrast, surjectivity of
$\stab_*\colon H_m(V^k_r(\R^{p,q}))\to H_m(V^k_{r+1}(\R^{p,q}))$ holds only in a
certain range. The rest of this section is devoted to the proof of the following
stability theorem:

\begin{theo}\label{theo:stab}
  For all $p\ge 0$ and $q\ge 1$ with $(p,q)\ne (0,1)$, the induced maps
  \begin{align*}
    H_m(V^k_r(\R^{p,q}))\to H_m(V^k_{r+1}(\R^{p,q})).
  \end{align*}
  are isomorphisms for $m\le \tfrac{r}{2}$.
\end{theo}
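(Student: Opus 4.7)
The plan is to complete the split injectivity proved above with a matching surjectivity statement in the range $m\le r/2$ by adapting the semi-simplicial resolution framework of \cite{palmerdis}. Since all cases with $p+q\ge 3$ are already covered in \cite{palmerdis} and the case $(0,2)$ by \cite{tran}, the only genuinely new work is in the case $(p,q)=(1,1)$; however, I would present a uniform argument that also recovers the known cases.

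First, I would build an augmented semi-simplicial space
\[
  \epsilon_\bullet\colon W_\bullet\to V^k_{r+1}(\R^{p,q}),
\]
whose $n$-simplex space $W_n$ consists of pairs $([Z],\mathbf{a})$ where $[Z]\in V^k_{r+1}(\R^{p,q})$ and $\mathbf{a}=(a_0,\dots,a_n)$ is an ordered tuple of pairwise disjoint embedded paths in $\R^p$, each starting at the $\pr_1$-image of a distinct cluster of $[Z]$, avoiding the $\pr_1$-images of all remaining clusters and escaping to the right along the first coordinate. The $i$-th face operator deletes $a_i$ together with its associated cluster; the augmentation forgets all arcs. Using the arcs as simultaneous isotopies to a canonical set of positions ``far to the right'', one shows that $W_n$ is homotopy equivalent to $V^k_{r-n}(\R^{p,q})$ in a way compatible with the stabilisation maps, so that the $i$-th face map corresponds, up to homotopy, to an iterated $\stab$.

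The crucial technical input is that $\epsilon_\bullet$ is $\lfloor r/2\rfloor$-connected; equivalently, for every fixed $[Z]\in V^k_{r+1}(\R^{p,q})$ the combinatorial complex of admissible right-going disjoint arc systems attached to the clusters of $[Z]$ is $(\lfloor r/2\rfloor-1)$-connected. For $p+q\ge 3$ this follows from a standard general-position argument as in \cite{palmerdis}. In the remaining case $(p,q)=(1,1)$, the arcs live in $\R^p=\R$, which prevents the usual perturbation trick: any right-going arc from $\pr_1(z_i)$ must cross the $\pr_1$-coordinate of every cluster strictly to the right of $z_i$. I would therefore reduce the connectivity statement to a statement about the nerve of the partial order ``is strictly to the right of'' on clusters after a generic perturbation that makes all $\pr_1$-coordinates distinct, and prove that this combinatorial complex is $(\lfloor r/2\rfloor-1)$-connected by a Hatcher-style bad-simplex argument.

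With the connectivity in hand, the spectral sequence associated with the augmented semi-simplicial space $\epsilon_\bullet$ has $E^1$-page
\[
  E^1_{s,t}=H_t\pa{V^k_{r-s}(\R^{p,q})},
\]
with horizontal differentials given by alternating sums of iterated stabilisation maps, and it abuts to zero in total degrees $\le r/2$. Induction on $r$ and the standard spectral-sequence comparison argument then yield surjectivity of $\stab_*$ in the range $m\le r/2$; combined with the split injectivity already proved, this gives the desired isomorphism. The main obstacle will be the connectivity of the arc complex in the genuinely two-dimensional case $(p,q)=(1,1)$: there is no room in $\R^p$ for arcs to move around clusters, so a bespoke combinatorial argument replacing general position is needed.
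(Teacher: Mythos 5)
Your strategy—an augmented semi-simplicial resolution with an arc complex whose high connectivity drives a spectral-sequence comparison—is genuinely different from the paper's, and the point you flag as \emph{bespoke} is exactly where it breaks. For $(p,q)=(1,1)$ your $n$-simplices require $n+1$ pairwise disjoint embedded paths in $\R^p=\R$, each starting at the $\pr_1$-image of a distinct cluster, avoiding all remaining $\pr_1$-images, and escaping to the right. In one dimension such a right-escaping, avoiding arc exists only from the rightmost $\pr_1$-coordinate; hence, after the generic perturbation you propose, the space of admissible arcs over a fixed $[Z]$ is a single arc, the space of admissible $1$-simplices is empty, and the arc complex is a point. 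That is nowhere near $(\lfloor r/2\rfloor-1)$-connected for $r\ge 3$. The suggested reduction to \emph{the nerve of the partial order ``is strictly to the right of''} does not repair this: on $r+1$ distinct reals that poset is a total order, whose nerve is a simplex; contractibility of the wrong complex does not feed into the resolution spectral sequence. No bad-simplex argument can conjure high connectivity out of a complex that is degenerate for dimensional reasons, so the surjectivity half of the theorem is not established by the proposal as written. If you wanted to salvage the arc-complex route you would at minimum have to let the arcs live in $\R^{p+q}$ and drag entire clusters (vertical segments) around one another, in the spirit of Tran's treatment of labelled configurations in $\R^2$; that would be a substantially different complex and requires its own connectivity argument.

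The paper avoids the issue altogether. Instead of resolving $V^k_{r+1}$, it filters each $V^k_r(\R^{p,1})$ by \emph{dexterity} (the number of equivalence classes of clusters under alignment-and-entanglement), identifies the relative homology $H_*(F_sV_r,F_{s-1}V_r)$ with $H_*(C_{r,s},C^*_{r,s})$ for an explicit \emph{insertion map} $\varphi_{r,s}$ into a space of coloured labelled configurations via excision and the Thom isomorphism for normal bundles, and then reduces stability of these relative groups to twisted homological stability of unordered configuration spaces $C_n(\R^{p+1})$ with a polynomial coefficient system, which already holds for $p+1\ge 2$ by Palmer's theorem. A Zeeman comparison of the Leray spectral sequences for $V_r$ and $V_{r+1}$ then yields the isomorphism range $m\le r/2$. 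This route buys exactly what your approach cannot: it is insensitive to the ambient dimension being as small as $2$, because the hard input—the twisted stability theorem—needs only $p+1\ge 2$, not a room-to-move argument in $\R^p$.
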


\noindent Many cases of Theorem \ref{theo:stab} have already been solved:
\begin{itemize}
\item We know that $\pi_0V_r(\R^{0,1})\cong \frS_{kr}/(\frS_k\wr\frS_r)$, so there is no
  stability result to be expected in the case $p=0$ and $q=1$.
\item For $p=0$, we are in the case without any vertical coupling condition.
  This can alternatively be described by embeddings of (disconnected)
  $0$-dimensional manifolds into $\R^q$. For these cases, the theorem was proven
  for $q\ge 3$ in \cite{palmerdis} and for $q= 2$ in \cite{tran}.
\item In \cite{latifi}, the case $p+q\ge 3$ was considered and proven. Actually,
  Latifi writes down the proof only for $p=2$ and $q=1$, but her strategy works
  whenever $p+q\ge 3$.
\end{itemize}
Hence we only have to prove the single remaining case $(p,q)=(1,1)$. However,
since the method is the same, we will provide a proof for arbitrary $(p,1)$ with
$p\ge 1$. Our proof uses different methods than Latifi’s proof.

\subsection{The dexterity filtration}

\begin{nota}
  In the remainder of the section we assume $q=1$. In this case, a vertical
  cluster $[z]=\{z^1,\dotsc,z^k\}\subseteq\R^{p+1}$ is canonically ordered by
  the last coordinate $t^{\smash j}\coloneqq \pr_t(z^{\smash j})\in \R$, and
  $[z]$ is determined by their common projection
  $\zeta\coloneqq\smash{\pr_\zeta}(z)\in\R^p$ and the real numbers
  $t^1,\dotsc,t^k$. Hence, we can write
  \[\{z^1,\dotsc,z^k\} = (\zeta;t^1<\dotsb<t^k).\]
\end{nota}

\begin{defi}\label{defi:dexterity}
  Let $Z\coloneqq \pa{z_1,\dots,z_r}\in \tilde{V}_{r}$ be an ordered
  configuration, where $z_i=(z_{i}^1,\dots,z_{i}^k)$. We define an equivalence
  relation $\sim_Z$ on the set $\set{1,\dots,r}$. First, set $i\sim_Z i'$
  whenever the two following conditions hold:
  \begin{itemize}
  \item $z_i$ and $z_{\smash{i'}}$ are \emph{aligned}, i.e.\ they are contained in the
    same $t$-line, or equivalently  $\pr_{\zeta}(z_i)=\pr_{\zeta}(z_{i'})$ in $\R^{p}$,
    since $q=1$;
  \item $z_i$ and $z_{i'}$ are \emph{entangled}, i.e.\ their convex hulls (contained
    in the vertical line) intersect each other, see Figure \ref{fig:entangle}.
  \end{itemize}
  Let $\sim_Z$ be the equivalence relation generated by the above basic
  relations $\sim_Z$. We define the \emph{dexterity} of $Z$, denoted
  $\delta(Z)$, to be the number $s$ of equivalence classes of $\sim_Z$. Since
  the notion of dexterity is invariant under the permutation action of the group
  $\frS_k\wr\frS_r$, we obtain a notion of dexterity also for unordered
  configurations in $V_r$.
\end{defi}

\begin{figure}[h]
  \centering
  \begin{tikzpicture}[scale=3]
  \draw[thin] (0,0) rectangle (1,1);
  \draw[dgrey,thick] (.7,.7) -- (.7,.5);
  \draw[dgrey,thick] (.7,.3) -- (.7,.2);
  \draw[dgrey,thick] (.5,.87) -- (.5,.35);
  \draw[dgrey,thick] (.3,.8) -- (.26,.8) -- (.26,.4) -- (.3,.4);
  \draw[dgrey,thick] (.3,.1) -- (.3,.2);
  \draw[dgrey,thick] (.3,.6) -- (.34,.6) -- (.34,.3) -- (.3,.3);
  \node at (.3,.3) {\tiny $\bullet$};
  \node at (.3,.4) {\tiny $\bullet$};
  \node at (.3,.1) {\tiny $\bullet$};
  \node at (.3,.2) {\tiny $\bullet$};
  \node at (.3,.6) {\tiny $\bullet$};
  \node at (.3,.8) {\tiny $\bullet$};
  \node at (.7,.5) {\tiny $\bullet$};
  \node at (.7,.7) {\tiny $\bullet$};
  \node at (.7,.2) {\tiny $\bullet$};
  \node at (.7,.3) {\tiny $\bullet$};
  \node at (.5,.87) {\tiny $\bullet$};
  \node at (.5,.35) {\tiny $\bullet$};
  \draw[thin,bgrey] (.2,.85) rectangle (.4,.25);
\end{tikzpicture}
  \caption{The leftmost two upper clusters are entangled and hence form an
    equivalence class. Therefore, the dexterity is $5$, while the number
    of clusters is $6$.}\label{fig:entangle}
\end{figure}

\begin{defi}[Dexterity filtration]
  For $s\ge -1$ we let $F_s V_{r}$ be the subspace of all $[Z]\in V_{r}$ satisfying
  $\delta[Z]\ge r-s$. We have inclusions
  \[\emptyset=F_{-1} V_{r}\subseteq F_0 V_{r}\subseteq\dotsb\subseteq F_{r-1}V_{r}=V_{r}.\]
  We denote by $\frF_s V_r$ the $s$\textsuperscript{th} stratum of the filtration
  \[\frF_s V_r\coloneqq F_sV_r\setminus F_{s-1}V_r\subset V_r.\]
\end{defi}
Note that each filtration level $F_sV_{r}$ is an open subspace of $V_r$, in
particular it is a manifold of the same dimension $p\cdot r+r\cdot k$; the
stratum $\frF_sV_r$ is a closed subset of $F_sV_r$.

Additionally, the stabilisation map $\stab\colon V_{r}\to V_{r+1}$ is
filtration-preserving, i.e.\ it restricts to maps $F_sV_r\to F_sV_{r+1}$ and
even to maps of strata $\frF_sV_r\to \frF_sV_{r+1}$.
\begin{lem}
  The stratum $\frF_sV_r\subset F_sV_r$ is a closed submanifold of codimension
  $s\cdot p$, i.e.\ of dimension $p\cdot (r-s)+r\cdot k$.
\end{lem}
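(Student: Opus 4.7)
The plan is to pass to the free cover $\tV_r \to V_r$ by $\frS_k \wr \frS_r$, work locally, and read off the submanifold structure from a system of linear alignment equations. Since the covering map is a local diffeomorphism, it will suffice to show that the preimage of $\frF_s V_r$ in $\tV_r$ is, locally, a closed submanifold of codimension $s\cdot p$ of (the preimage of) $F_s V_r$.

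First I would verify that $F_s V_r$ is open in $V_r$, equivalently that $\delta\colon V_r\to\Z$ is lower semicontinuous. Fix $[Z]\in V_r$ with partition $\pi=\{B_1,\dots,B_{r-s}\}$. For any two aligned distinct clusters $z_i, z_{i'}$ belonging to the same block, all $2k$ of the associated $t$-coordinates are pairwise distinct real numbers (alignment forces equal $\pr_\zeta$, and distinctness of the cluster points then forces distinct $t$-values). Hence the interval-overlap condition defining entanglement is strict, and both entanglement and non-entanglement of aligned pairs are open conditions. Alignment itself (equality of $\pr_\zeta$) is a closed condition, so creating \emph{new} alignments between different blocks, or destroying alignments within a block via $\R^p$-perturbation, can only increase $\delta$. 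Therefore $\{\delta\ge r-s\}$ is open; consequently $\frF_s V_r=F_sV_r\setminus F_{s-1}V_r$ is closed in $F_sV_r$.

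For the submanifold structure, lift to $Z=(z_1,\dots,z_r)\in\tV_r$ with dexterity partition $\pi$, and study the preimage of $\frF_s V_r$ near $Z$. By the semicontinuity argument above, a configuration $Z'$ near $Z$ lies in $\frF_s V_r$ if and only if three conditions hold: (a) $\pr_\zeta(z_i')=\pr_\zeta(z_{i'}')$ whenever $i\sim_Z i'$, i.e.\ alignments within blocks persist; (b) no two clusters in different blocks of $\pi$ become aligned; (c) entanglement within each block is preserved. Condition (c) is automatic from the distinctness of $t$-coordinates discussed above. Condition (b) is an open condition on $Z'$. Condition (a), by choosing a spanning tree on each block $B_j$, is an independent system of $|B_j|-1$ vector equations in $\R^p$ per block, giving altogether $\sum_j(|B_j|-1)\cdot p = s\cdot p$ independent scalar linear equations on the open subset $\tV_r\subseteq (\R^{p+1})^{rk}$. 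These cut out a submanifold of codimension $s\cdot p$.

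Descending via the free covering action of $\frS_k\wr\frS_r$ yields that $\frF_sV_r$ is a closed submanifold of $F_sV_r$ of codimension $s\cdot p$, hence of dimension
\[
  (p\cdot r + r\cdot k) - s\cdot p = p\cdot(r-s)+r\cdot k.
\]
The only delicate step is the robustness of entanglement (condition (c)); once one observes that aligned distinct clusters automatically have distinct $t$-coordinates, everything else reduces to counting independent linear equations on the $\R^p$-projection coordinates of the $r$ clusters.
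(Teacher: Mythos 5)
Your proof is correct and takes essentially the same approach as the paper, which likewise reads the codimension off from the local alignment equations $\zeta_i=\zeta_{i'}$ on the $\R^p$-parameters of a small neighbourhood. You supply some useful extra detail that the paper only asserts implicitly (the lower-semicontinuity argument for the dexterity and the local iff-characterisation of the stratum via conditions (a)--(c)), and you pass to the ordered cover where the paper instead chooses block representatives $i_c$ directly in $V_r$, but the core computation is the same.
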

\begin{proof}
  Let $[\mathring{Z}] =\sum_i [\mathring{z}_i]\in \frF_sV_r$ with
  $[\mathring{z}_i]=(\mathring \zeta_i;\mathring t_i^1,\dotsc,\mathring t_i^k)$,
  where $\mathring \zeta_i\in \R^p$ and $\mathring t_i^1<\dotsb< \mathring t_i^k\in \R$.
  A small neighbourhood of $[\mathring{Z}]$ in $V_r$ is described by the following local
  parameters constituting $[Z]=\sum_i(\zeta_i,t_i^1,\dotsc,t_i^{k})$:
  \begin{itemize}
  \item $\zeta_i$, ranging in a neighbourhood of $\mathring{\zeta}_i\in\R^p$,
    for all $1\le i\le r$;
  \item $t_i^{\smash j}$, ranging in a neighbourhood of
    $\mathring t_i^{\smash j}\in\R$, for all $1\le i\le r$ and $1\le j\le k$.
  \end{itemize}
  For $[Z]$ in a neighbourhood of $[\mathring{Z}]$, the condition $[Z]\in\frF_sV_r$ is, up
  to a per\-mutation of clusters of $Z$, equivalent to the equality $\zeta_i=\zeta_{i'}$
  whenever $\smash{i\sim_{\smash{\mathring{Z}}}i'}$.
  
  Let $1\le i_1<\dotsb <i_{r-s}\le r$ be the unique representatives of the $r-s$
  equivalence classes of the relation $\smash{\sim_{\smash{\mathring{Z}}}}$,
  satisfying for each $1\le c\le r-s$ and each
  $\smash{i\sim_{\smash{\mathring{Z}}}i_c}$ the inequality
  $\smash{\mathring t_{i}^1\ge \mathring t_{i_c}^1}$. Thus, a neighbourhood of
  $\smash{[\mathring{Z}]}$ in $\frF_s$ is described by the following local
  parameters:
  \begin{itemize}
  \item $\zeta_{i_{\smash{c}}}$ for all $1\le c\le r-s$;
  \item $t_{i}^{\smash j}$ for all $1\le i\le r$ and $1\le j\le k$.\qedhere
  \end{itemize} 
\end{proof}
\begin{rem}
  \label{rem:normalfrF}
  The argument in the previous proof can be pushed a bit further to describe the
  normal cotangent bundle $N^*(\frF_sV_r,F_sV_r)$ of $\frF_sV_r$ in $F_sV_r$.
  Recall that, for fixed $[\mathring{Z}]\in\frF_sV_r$, the normal cotangent
  space $\smash{N^{\vphantom{\underline{j}}*}_{\smash{[\mathring{Z}]}}(\frF_sV_r,F_sV_r)}$
  is the \emph{subspace} of the cotangent space
  $\smash{T^{\vphantom{\underline{j}}*}_{\smash{[\mathring{Z}]}}F_sV_r}$ of all
  linear functionals that vanish on the vector subspace
  $T_{\smash{[\mathring{Z}]}}\frF_sV_r$. The previous proof shows that
  $N^{\vphantom{\underline{j}}*}_{\smash{[\mathring{Z}]}}(\frF_sV_r,F_sV_r)$ is
  ‘spanned’ by the parameters $\zeta_i$ for
  $i\in\set{1,\dots,r}\setminus\set{i_1,\dots,i_{r-s}}$. By this we formally
  mean the following:
  \begin{itemize}
  \item for each index $i\in\set{1,\dots,r}\setminus\set{i_1,\dots,i_{r-s}}$ we
    consider the list of $p$ linear functionals
    $\mathrm{d}\zeta_i^1,\dots,\mathrm{d}\zeta_i^p$, where
    $\zeta_i^1,\dots,\zeta_i^p\in\R$ are the $p$ coordinates of the parameter
    $\zeta_i$, which takes values in $\R^p$;
  \item a basis for
    $N^{\vphantom{\underline{j}}*}_{\smash{[\mathring{Z}]}}(\frF_sV_r,F_sV_r)$
    is given by the linear functionals $\mathrm{d}\zeta_i^{\smash j}$ where $i$
    ranges in $\set{1,\dots,r}\setminus\set{i_1,\dots,i_{r-s}}$ and
    $1\le j\le p$.
  \end{itemize}
\end{rem}

\subsection{Coloured configuration spaces}

\begin{nota}[Distributions]
  Let $E$ be an index set. A \emph{distribution} is a map $\alpha\colon E\to \N$
  with finite support. We write $\alpha_e\coloneqq \alpha(e)$ and
  $\alpha=\sum_e \alpha_e\cdot e$.

  In particular, for a fixed $e_0\in E$, we denote by $\alpha+e_0$ the
  distribution which coincides with $\alpha$, except for the fact that it
  increases $\alpha_{e_0}$ by $1$.
\end{nota}

\begin{defi}[Coloured labelled configuration spaces]
  Let $E$ be a set and $\alpha\colon E\to \N$ a distribution. Define
  $\lvert \alpha\rvert\coloneqq \sum_{e\in E}\alpha_e$ and
  $\frS({\alpha})\coloneqq \prod_{e\in E}\frS_{\alpha_e}\subseteq
  \frS_{|\alpha|}$. Moreover, let $X\coloneqq (X_e)_{e\in E}$ be a family of
  spaces. Then we define
  \[C_{\alpha}(\R^{p+1};X) \coloneqq \tilde{C}_{|\alpha|}(\R^{p+1})\times_{\frS(\alpha)}
    \prod_{e\in E} X^{\alpha_e}_e.\]
  In case $X_e=*$ for all $e$, we just write
  $C_{\alpha}(\R^{p+1})=\tilde{C}_{\lvert \alpha\rvert}(\R^{p+1})/\frS(\alpha)$.
\end{defi}

Informally, we consider $C_\alpha(\R^{p+1};X)$ as the space of unordered
configurations of $|\alpha|$ unordered points, each equipped with a label in
$\coprod_{e}X_e$, such that for all $e\in E$, there are precisely $\alpha_e$
points carrying a label in $X_e$.

\begin{nota}
  For unordered labelled configurations as before, we will use again the
  suggestive ‘sum notation’: For distinct points
  $y_1,\dotsc,y_{\lvert \alpha\rvert}\in \R^{p+1}$ and labels
  $x_1,\dotsc,x_{\lvert \alpha\rvert}\in X$, we will denote the unordered
  labelled configuration $\{y_1,\dotsc,y_n\}\subseteq \R^{p+1}$ in which the
  point $y_l$ carries the label $x_l$ by
  \[\Theta\coloneqq \sum_{l=1}^{\abs{\alpha}} y_l\otimes x_l\in C_\alpha(\R^{p+1};X).\]
\end{nota}

\begin{defi}
  For $w\ge 1$, an unordered partition of $\{1,\dotsc,w\cdot k\}$ into subsets
  $S_1,\dots,S_w$ of size $k$ is \emph{irreducible} if there is no
  $1\le i\le w-1$ for which the subset $\{1,\dots,i\cdot k\}$ is a union of some
  pieces $S_{b}$ of the partition, see Figure \ref{fig:redpart}. We denote by $\bE_w$ the set of all
  irreducible, unordered partitions of $\{1,\dotsc,w\cdot k\}$.
\end{defi}

\begin{nota}\label{nota:E}
  We denote $\bE\coloneqq \coprod_{w\ge 1}\bE_w$ the union of all $\bE_w$. For
  $e\in \bE_w\subset \bE$, we will write $w(e)\coloneqq w\ge 1$ for the
  \emph{weight} of $e$. Note that there is precisely one partition $e_0\in \bE$
  with $w(e_0)=1$. For all $e=\set{S_1,\dots,S_w}\in \bE_w$, we use the
  convention that $\min(S_b)<\min(S_{b'})$ for $1\le b<b'\le w(e)$.
\end{nota}

\begin{figure}[h]
  \centering
  \begin{tikzpicture}[xscale=.5,yscale=.6]
  \draw[thick,bred] (0,0) -- (0,.3) -- (2,.3) -- (2,0);
  \draw[thick,bred] (1,0) -- (1,-.3) -- (3,-.3) -- (3,0);
  \draw[thick,bred] (4,0) -- (4,.3) -- (5,.3) -- (5,0);
  \node[dred] at (0,0) {\tiny $\bullet$};
  \node[dred] at (1,0) {\tiny $\bullet$};
  \node[dred] at (2,0) {\tiny $\bullet$};
  \node[dred] at (3,0) {\tiny $\bullet$};
  \node[dred] at (4,0) {\tiny $\bullet$};
  \node[dred] at (5,0) {\tiny $\bullet$};
  \draw[thin,bred!50] (-1,-1) rectangle (6,1);
\end{tikzpicture}
\qquad
\begin{tikzpicture}[xscale=.5,yscale=.6]
  \draw[thick,bgreen] (0,0) -- (0,.3) -- (2,.3) -- (2,0);
  \draw[thick,bgreen] (1,0) -- (1,-.3) -- (4,-.3) -- (4,0);
  \draw[thick,bgreen] (3,0) -- (3,.3) -- (5,.3) -- (5,0);
  \node[dgreen] at (0,0) {\tiny $\bullet$};
  \node[dgreen] at (1,0) {\tiny $\bullet$};
  \node[dgreen] at (2,0) {\tiny $\bullet$};
  \node[dgreen] at (3,0) {\tiny $\bullet$};
  \node[dgreen] at (4,0) {\tiny $\bullet$};
  \node[dgreen] at (5,0) {\tiny $\bullet$};
  \draw[thin,bgreen!50] (-1,-1) rectangle (6,1);
\end{tikzpicture}
  \caption{We can picture partitions of $\{1,\dotsc,w\cdot k\}$ as in this
    figure. Here we see a reducible partition (left) and an irreducible one
    (right) with $k=2$ and $w=3$.}\label{fig:redpart}
\end{figure}

\begin{rem}\label{rem:irredDext}
  The notion of irreducibility is related to the dexterity filtration: given
  an irreducible partition $e=(S_1,\dotsc,S_w)$ with $S_b =
  \set{h_{b}^1<\dotsb<h_b^k}$ and $\zeta_1,\dotsc,\zeta_w\in \R^p$, consider the
  configuration $[Z]\coloneqq \sum_{b=1}^w[z_b^1,\dotsc,z_b^k]\in V_w$ with
  $z_b^{\smash j} = (\zeta_b,h_b^{\smash j})$. Then $[Z]$ has dexterity 1, see
  Definition \ref{defi:dexterity}, if and only if $\zeta_1=\dotsb=\zeta_w$
  holds. This is used in the following, for the special case $\zeta_1=0$.
\end{rem}

\begin{constr}
  For each $e\in \bE$ we denote by $D_e$ the product $(D^p)^{w(e)-1}$, where
  $D^p\subset\R^p$ denotes the standard, Euclidean unit disc; note that $D_e$
  is, topologically, also a disc. We thus obtain a family of discs
  $D\coloneqq (D_e)_{e\in \bE}$, and regard each $D_e$ as a subset of
  $\R^{p\cdot(w(e)-1)}$. For reasons that will become clearer later, we denote
  by $\xi_2,\dots,\xi_{w(e)}$ the $w(e)-1$ parameters of $D_e$, each taking
  values in $D^p$; each parameter $\xi_b$ consists of $p$ coordinates
  $\xi_b^1,\dots,\xi_b^p\in\R$.
  
  For a distribution $\alpha\colon \bE\to \N$, we define the \emph{degree}
  $\deg(\alpha)$ as the pair of non-negative numbers
  \[\deg(\alpha)=(r(\alpha),s(\alpha))\coloneqq
    \pa{\sum_e \alpha_e\cdot w(e) \,,\, \sum_e \alpha_e\cdot \pa{w(e)-1}}.\] We
  let $C_{r,s}\coloneqq C_{r,s}(\R^{p+1};D)$ be the union of all spaces
  $C_\alpha(\R^{p+1};D)$, where $\alpha$ ranges among distributions
  $\alpha\colon \bE\to\N$ with $\deg(\alpha)=(r,s)$.
  
  Each disc $D_e$ contains a centre $0_e\in D_e$, and we denote by
  $0\coloneqq (0_e)_{e\in \bE}$ the family of centres; we obtain an inclusion
  $C_\alpha(\R^{p+1};0)\subseteq C_\alpha(\R^{p+1};D)$ which is a closed
  embedding of a submanifold of codimension $s(\alpha)\cdot p$ for each
  distribution $\alpha\colon \bE\to \N$. We write
  $C_\alpha^*(\R^{p+1};D)\coloneqq C_\alpha(\R^{p+1};D)\setminus
  C_\alpha(\R^{p+1};0)$.

  Moreover, we define $C_{r,s}^0\subseteq C_{r,s}$ to be the union of all
  $C_\alpha(\R^{p+1};0)$ with $\deg(\alpha)=(r,s)$ and define its complement
  $C^*_{r,s}\coloneqq C_{r,s}\setminus C_{r,s}^0$.
\end{constr}

\begin{rem}
  \label{rem:normalCa}
  A generic point in $C_{r,s}$ is of the form
  \[\Theta=\sum_{l=1}^{r-s} y_l\otimes (e_l,\bm{\xi}_l),\]
  where the points $y_1,\dotsc,y_{r-s}\in \R^{p+1}$ are distinct, $e_l\in \bE$,
  and $\bm{\xi}_l\in D_{e_l}$ is expanded as
  $\bm{\xi}_l=(\xi_{l,2},\dotsc,\xi_{\smash{l,w(e_l)}})$ with
  $\xi_{l,b}\in D^p$. Consider a distribution $\alpha$ of degree
  $\deg(\alpha)=(r,s)$, and let
  $\mathring{\Theta}\coloneqq \sum_{l=1}^{r-s} \mathring{y}_l\otimes
  (e_l,0_{e_l})\in C_{\alpha}(\R^{p+1};0)\subseteq C_{r,s}^0$. In order to
  describe a small neighbourhood of $\mathring{\Theta}$ in $C_{r,s}^0$, we can
  use the local parameters $y_l\in\R^{p+1}$, each ranging in a small
  neighbourhood of $\mathring{y}_l$; to describe a small neighbourhood of
  $\smash{\mathring{\Theta}}$ in $C_{r,s}$ we can additionally use the
  parameters $\bm{\xi}_l=(\xi_{l,2},\dotsc,\xi_{l,w(e)})\in D_{e_l}$, each
  ranging in a small neighbourhood of $0_{e_l}$. It follows that the
  $N^*_{\mathring{\Theta}}(C_{r,s}^0,C_{r,s})$ is ‘spanned by the parameters
  $\bm{\xi}_l$ for $1\le l\le r-s$’. By this we mean the following:
  \begin{itemize}
  \item for each $1\le l\le r-s$ we consider the list of $p\cdot(w(e_l)-1)$
    linear functionals $\mathrm{d}\xi_{l,b}^\tau$, for $2\le b\le w(e_l)$ and
    $1\le \tau\le p$. Here $\xi_{l,b}^\tau$, for $2\le b\le w(e_l)$ and
    $1\le \tau\le p$, are the $p\cdot (w(e_l)-1)$ coordinates of the parameter
    $\bm{\xi}_l$, which takes values in $D_{e_l}\subset \R^{p\cdot (w(e_l)-1)}$;
  \item a basis for $N^*_{\mathring{\Theta}}(C_{r,s}^0,C_{r,s})$ is given by all
    linear functionals $\mathrm{d}\xi_{l,b}^\tau$ indexed by $1\le l\le r-s$,
    $2\le b\le w(e_l)$, and $1\le \tau\le p$.
  \end{itemize}
\end{rem}

There are stabilisation maps $C_{r,s}\to C_{r+1,s}$ given by placing a new point
with label in $D_{e_0}=*$ on the ‘right’ with respect to the first
coordinate of $\R^{p+1}$. The stabilisation increases the parameter $r$ by $1$,
but leaves $s$ constant. Moreover, $C^0_{r,s}$ is sent to $C^0_{r+1,s}$ under
the stabilisation map, and $C^*_{r,s}$ is sent to $C^*_{r+1,s}$

\subsection{The insertion map}

We will now connect the filtration pairs $(F_sV_r,F_{s-1}V_r)$ to the pairs
$(C_{r,s},C^*_{r,s})$ of labelled configurations via an ‘insertion map’.

\begin{constr}
  For each $1\le s\le r$, we have a map of pairs
  \[\varphi_{r,s}\colon (C_{r,s},C^*_{r,s}) \to (F_sV_r,F_{s-1}V_r),\]
  which pictorially does the following, see Figure \ref{fig:insert}: given a
  labelled configuration in $C_{r,s}$, we draw pairwise disjoint cylinders
  around each point in $\R^{p+1}$ and place inside each of them a small
  ‘standard configuration’ which corresponds to the given indecomposable
  partition, and is ‘perturbed’ by the $w(e)-1$ disc parameters from the label,
  where in each cylinder, one cluster stays in the centre.

  By Remark \ref{rem:irredDext}, the dexterity of the resulting vertical
  configuration is $r-s$ if and only if \emph{all} clusters inside each cylinder
  stay in the centre, i.e.\ if all disc parameters are $0$. Thus, if the labelled
  configuration to start with lies in $C_{r,s}^*$, then the dexterity is at least
  $r-(s-1)$, whence we land in the filtration component $F_{s-1}V_r$. Formally,
  the map $\phi_{r,s}$ is constructed as follows:
  \begin{itemize}
  \item For each $w\ge 1$, each subset $S\subseteq \{1,\dotsc,w\cdot k\}$ of
    cardinality $k$, and each $\xi\in D^p$, we define the unordered ‘standard
    cluster’
    \[T_S(\xi) \coloneqq
      \pa{\xi,\pa{-1+\tfrac{2}{k\cdot w+1}\cdot h}_{h\in S}}.\]
    Pictorially, $T_S(\xi)$ is the unordered vertical cluster of $k$ points
    which projects to $\xi\in D^p$ and whose $t$-coordinate takes the values
    corresponding to $S$, among all values arising from a uniform distribution of
    $w\cdot k$ points in the interior of the interval $[-1;1]$.
  \item For a partition $e\in \bE$ write $S_1,\dotsc,S_{w(e)}\subseteq\{1,\dotsc,w\cdot k\}$
    for the partition components; since $e$ is an unordered partition, we assume
    without loss of generality that $\min(S_i)<\min(S_{i+1})$. For all
    $\xi_2,\dotsc,\xi_{w(e)}\in D^p$, we set $\xi_{1}\coloneqq 0\in D^p$ and define
    \[T_e(\xi_2,\dotsc,\xi_{w(e)}) \coloneqq
      \sum_{b=1}^{w(e)} T_{S_b}(\xi_{b})\in V_{w(e)}.\]
    Note that the ‘lowest cluster’ (the one attaining the lowest value of the
    $t$-coordinate) is ‘in the middle’ (projects to the centre of $D^p$). Note also
    that $S_b$ and $S_{\smash{b'}}$ are disjoint for $b\neq b'$, hence the clusters
    $T_{S_b}(\xi_b)$ and $\smash{T_{S_{\smash{b'}}}(\xi_{\smash{b'}})}$ are also
    disjoint, and the sum defining $\smash{T_e(\xi_2,\dotsc,\xi_{w(e)})}$ is
    well-defined.
  \item Consider on $\R^p\times \R$ the ‘product distance’
    \[d((\zeta,t),(\zeta',t')) \coloneqq \max(d(\zeta,\zeta'),d(t,t'))\]
    with respect to the Euclidean distances on $\R^p$ resp. $\R$. This means
    that for a radius $\rho>0$, the closed $\rho$-ball around $(\zeta,t)$ is given
    by the cylinder $B_{\rho}(\zeta,t)=(\zeta+\rho\cdot D^p)\times [t-\rho;t+\rho]$.
  \end{itemize}
  \begin{figure}[t]
    \centering
    \tikzstyle{very densely dashed}=[dash pattern=on 1.8pt off 1.3pt]
\begin{tikzpicture}[xscale=3.9,yscale=3.9]
  \draw [very thin,bgrey] (0,0) -- (.6,.4) -- (1.4,.4);
  \draw [very thin,bgrey]  (.6,.4) -- (.6,1.2);
  \draw[very thick,white] (.3,.8) -- (.3,.1);
  \draw[very thick,white] (.8,1) -- (.8,.3);
  \draw[very thick,white] (1.023,.55) -- (1.023,0);
  \draw [dgrey,thin] (.3,1.033) -- (.3,0.072);
  \draw [dgrey,thin] (.8,1.043) -- (.8,0.082);
  \draw [dgrey,thin] (1.023,1.14) -- (1.023,0.283);
  \draw[very thick,white] (1,1) -- (1.4,1.2);
  \draw[very thick,white] (1,.55) -- (1,0);
  \draw[very thick,white] (0.1,1) -- (.9,1);
  \draw [grey] (0,0) rectangle (1,1);
  \draw [grey] (1,0) -- (1.4,.4) -- (1.4,1.2) -- (1,1);
  \draw [grey] (0,1) -- (.6,1.2) -- (1.4,1.2);
  \node at (.3,.3) {\tiny $\bullet$};
  \node at (.3,.7) {\tiny $\bullet$};
  \node at (1.023,.8) {\tiny $\bullet$};
  \node at (.8,.5) {\tiny $\bullet$};
  \draw[dred,very densely dashed] (.14,.745) -- (.12,.745) -- (.12,.685) -- (.14,.685);
  \draw[dblue] (.14,.715) -- (.16,.715) -- (.16,.655) -- (.14,.655);
  \node[dred] at (.14,.743) {$\cdot$};
  \node[dblue] at (.14,.713) {$\cdot$};
  \node[dred] at (.14,.683) {$\cdot$};
  \node[dblue] at (.14,.653) {$\cdot$};
  \draw[dred,very densely dashed] (.225,.7) circle (1.4pt);
  \node[dred] at (.21,.681) {$\cdot$};
  \draw[dblue] (.26,.315) -- (.26,.285);
  \node[dblue] at (.26,.313) {$\cdot$};
  \node[dblue] at (.26,.283) {$\cdot$};
  \draw[dblue] (1.063,.815) -- (1.063,.785);
  \node[dblue] at (1.063,.813) {$\cdot$};
  \node[dblue] at (1.063,.783) {$\cdot$};
  \draw[dgreen,densely dotted] (.725,.5) circle (1.4pt);
  \draw[dred,very densely dashed] (.615,.5) circle (1.4pt);
  \node[dred] at (.59,.5) {$\cdot$};
  \node[dgreen] at (.725,.498) {$\cdot$};
  \draw[dred,very densely dashed] (.53,.575) -- (.51,.575) -- (.51,.515) -- (.53,.515);
  \draw[dgreen,densely dotted] (.53,.545) -- (.55,.545) -- (.55,.455) -- (.53,.455);
  \draw[dblue] (.53,.485) -- (.51,.485) -- (.51,.425) -- (.53,.425);
  \node[dred] at (.53,.573) {$\cdot$};
  \node[dgreen] at (.53,.543) {$\cdot$};
  \node[dred] at (.53,.513) {$\cdot$};
  \node[dblue] at (.53,.483) {$\cdot$};
  \node[dgreen] at (.53,.453) {$\cdot$};
  \node[dblue] at (.53,.423) {$\cdot$};
\end{tikzpicture}
\quad\raisebox{42px}{$\mapsto$}\quad
\begin{tikzpicture}[xscale=3.9,yscale=3.9]
  \draw [very thin,bgrey] (0,0) -- (.6,.4) -- (1.4,.4);
  \draw [very thin,bgrey]  (.6,.4) -- (.6,1.2);
  \draw[very thick,white] (.3,.8) -- (.3,.1);
  \draw[very thick,white] (.8,1) -- (.8,.3);
  \draw[very thick,white] (1.023,.55) -- (1.023,0);
  \draw [dgrey,thin] (.3,1.033) -- (.3,0.072);
  \draw [dgrey,thin] (.8,1.043) -- (.8,0.082);
  \draw [dgrey,thin] (1.023,1.14) -- (1.023,0.283);
  \draw[very thick,white] (1,1) -- (1.4,1.2);
  \draw[very thick,white] (1,.55) -- (1,0);
  \draw[very thick,white] (0.1,1) -- (.9,1);
  \draw [grey] (1,0) -- (1.4,.4) -- (1.4,1.2) -- (1,1);
  \draw [grey] (0,1) -- (.6,1.2) -- (1.4,1.2);
  \draw[very thick,white] (.3,.38) ellipse (2pt and .8pt);
  \draw[very thick,white] (.3,.22) ellipse (2pt and .8pt);
  \draw[thin,dyellow] (.3,.38) ellipse (2pt and .8pt);
  \draw[thin,dyellow] (.3,.22) ellipse (2pt and .8pt);
  \draw[very thick, white] (.3,.38) -- (.3,.45);
  \draw[dgrey,thin] (.3,.38) -- (.3,.45);
  \draw[very thick, white] (.3,.22) -- (.3,.28);
  \draw[dgrey,thin] (.3,.22) -- (.3,.28);
  \draw[dyellow] (.23,.22) -- (.23,.38);
  \draw[dyellow] (.37,.22) -- (.37,.38);
  \draw[very thick,white] (1.023,.88) ellipse (2pt and .8pt);
  \draw[very thick,white] (1.023,.72) ellipse (2pt and .8pt);
  \draw[thin,dyellow] (1.023,.88) ellipse (2pt and .8pt);
  \draw[thin,dyellow] (1.023,.72) ellipse (2pt and .8pt);
  \draw[very thick, white] (1.023,.88) -- (1.023,.95);
  \draw[dgrey,thin] (1.023,.88) -- (1.023,.95);
  \draw[very thick, white] (1.023,.72) -- (1.023,.78);
  \draw[dgrey,thin] (1.023,.72) -- (1.023,.78);
  \draw[dyellow] (.953,.72) -- (.953,.88);
  \draw[dyellow] (1.093,.72) -- (1.093,.88);
  \node[dred] at (.76,.554) {$\cdot$};
  \draw[dred,very densely dashed,semithick] (.76,.557) -- (.76,.511);
  \draw[very thick,white] (.3,.78) ellipse (2pt and .8pt);
  \draw[very thick,white] (.3,.62) ellipse (2pt and .8pt);
  \draw[thin,dyellow] (.3,.78) ellipse (2pt and .8pt);
  \draw[thin,dyellow] (.3,.62) ellipse (2pt and .8pt);
  \draw[very thick, white] (.3,.78) -- (.3,.85);
  \draw[dgrey,thin] (.3,.78) -- (.3,.85);
  \draw[very thick, white] (.3,.62) -- (.3,.68);
  \draw[dgrey,thin] (.3,.62) -- (.3,.68);
  \draw[dyellow] (.23,.62) -- (.23,.78);
  \draw[dyellow] (.37,.62) -- (.37,.78);
  \draw[very thick,white] (.8,.58) ellipse (2pt and .8pt);
  \draw[very thick,white] (.8,.42) ellipse (2pt and .8pt);
  \draw[thin,dyellow] (.8,.58) ellipse (2pt and .8pt);
  \draw[thin,dyellow] (.8,.42) ellipse (2pt and .8pt);
  \draw[very thick, white] (.8,.58) -- (.8,.65);
  \draw[dgrey,thin] (.8,.58) -- (.8,.65);
  \draw[very thick, white] (.8,.42) -- (.8,.48);
  \draw[dgrey,thin] (.8,.42) -- (.8,.48);
  \draw[dyellow] (.73,.42) -- (.73,.58);
  \draw[dyellow] (.87,.42) -- (.87,.58);
  \node[dred] at (.28,.676) {\Large $\cdot$};
  \node[dblue] at (.3,.71) {\Large$\cdot$};
  \node[dred] at (.28,.739) {\Large$\cdot$};
  \draw[dblue,semithick] (.3,.3267) -- (.3,.2733);
  \node[dblue] at (.3,.3267) {\Large $\cdot$};
  \node[dblue] at (.3,.2733) {\Large $\cdot$};
  \node[dblue] at (.3,.647) {\Large $\cdot$};
  \draw[dred,very densely dashed,semithick] (.28,.68) -- (.28,.744);
  \draw[dblue,semithick] (.3,.716) -- (.3,.652);
  \node[dgreen] at (.8,.531) {\Large $\cdot$};
  \node[dred] at (.76,.508) {\Large $\cdot$};
  \node[dred] at (.76,.57) {\Large $\cdot$};
  \node[dblue] at (.8,.485) {\Large $\cdot$};
  \node[dgreen] at (.8,.463) {\Large $\cdot$};
  \node[dblue] at (.8,.439) {\Large $\cdot$};
  \draw[dgreen,densely dotted,semithick] (.8,.534) -- (.82,.534) -- (.82,.466) -- (.8,.466);
  \draw[dblue,semithick] (.8,.442) -- (.78,.442) -- (.78,.488) -- (.8,.488);
  \draw[dblue,semithick] (1.023,.8267) -- (1.023,.7733);
  \node[dblue] at (1.023,.8267) {\Large $\cdot$};
  \node[dblue] at (1.023,.7733) {\Large $\cdot$};
  \draw[very thick, white] (1,.5) -- (1,.95);
  \draw [grey] (0,0) rectangle (1,1);
\end{tikzpicture}
    \caption{An instance of the insertion map
      $\varphi_{7,3}\colon (C_{7,3},C^*_{7,3})\to (F_3V_7,F_2V_7)$.
      The result even lies in the deeper filtration component $F_1V_7$.}\label{fig:insert}
  \end{figure}
  \noindent Now we have everything together to define the desired insertion map:
  given an element $\Theta=\sum_{l} y_l\otimes (e_l,\bm{\xi}_l)$ in $C_{r,s}$,
  we define
  \[\rho=\rho(\Theta) \coloneqq
    \begin{cases}
      \tfrac15\cdot \min_{l\ne l'}d(y_l,y_{l'}) & \text{for $r-s\ge 2$},\\
      1                                         & \text{for $r-s=1$.}
    \end{cases}\]
  and accordingly, the map
  \[\varphi_{r,s}\colon C_{r,s}\to F_sV_r,\quad
    \Theta=\sum_{l=1}^{r-s} y_l\otimes (e_l,\bm{\xi}_l)\mapsto
    \sum_{l=1}^{r-s} \pa{y_l + \rho(\Theta)\cdot T_{e_l}(\bm{\xi}_l)}.\]
  First of all, note that the signs ‘$+$’ and ‘$\cdot$’ in the expression ‘$y_l
  + \rho(\Theta)\cdot T_{e_l}(\bm{\xi}_l)$’ denote a translation and a dilation in
  $\R^{p+1}$; the sum sign always describes an unordered collection (of points, or
  of vertical clusters). Note that the second sum is well-defined as the
  configurations $y_l+\rho(\Theta)\cdot T_{e_l}(\bm{\xi}_l)$ lie inside the
  cylinders $B_{\rho}(y_l)$ and are hence disjoint. Finally, we have
  $\delta(\varphi_{r,s}(\Theta))\ge r-s$, so the image of $\varphi_{r,s}$ is
  actually contained in the filtration level $F_sV_r\subset V_r$.

  As indicated before, Remark \ref{rem:irredDext} ensures that
  $\varphi_{r,s}(\Theta)$ lies in the stratum $\frF_sV_r$ if and only if
  $\bm{\xi}_l=0_{e_l}$ for all $1\le l\le r-s$. In particular $\varphi_{r,s}$
  restricts to maps $C^*_{r,s}\to F_{s-1}V_r$ and $C^0_{r,s}\to \frF_sV_r$.
\end{constr}

\begin{rem}
  \begin{enumerate}
  \item [\Th{1.}] In fancier language, and up to homotopy, we
    constructed the following: Let $D\coloneqq \coprod_e D_e$, and define a map
    $\varphi\colon D\to V:=\coprod_{r\ge 0}V_r$, restricting for all $e\in \bE$
    to a map $D_e\to V_{w(e)}$ similar to the map $T_e$ above. Then we use that
    $V$ is an $\scC_{p+1}$-algebra and consider the adjoint map of
    $\scC_{p+1}$-algebras, with source the free $\scC_{p+1}$-algebra on $D$:
    \[\varphi\colon C(\R^{p+1};D)\simeq F^{\scC_{p+1}}(D)\to V.\]
    The left hand side decomposes as a disjoint union of the spaces $C_{r,s}$ as
    before, while the right hand side decomposes into the spaces $V_r$, which
    are filtered by the spaces $F_sV_r$, and the map $\varphi$ is compatible
    with this decomposition and filtration. We spelled out the insertion map
    directly for three reasons: firstly, we need the explicit choice of
    $\rho$ later in the proof; secondly, this adjoint description gives a
    definition of $\varphi$ which is sensible only up to homotopy, and in
    particular we could not immediately make sense of a statement like:
    \emph{$\varphi$ is compatible with the filtrations and their strata},
    and thirdly, we need each $\phi_{r,s}$ to be smooth in the proof of the
    upcoming Proposition \ref{prop:varphiiso}.
  \item[\Th{2.}] The insertion maps respect the stabilisation maps
    on both sides up to homotopy: the stabilisation maps in the following
    diagram of maps of pairs can be chosen so that the diagram commutes on the
    nose
    \[
      \begin{tikzcd}
        (C_{r,s},C^*_{r,s}) \ar[r,"\varphi_{r,s}"]\ar[d,swap,"\stab"] & (F_sV_r,F_{s-1}V_r)\ar[d,"\stab"]\\
        (C_{r+1,s},C^*_{r+1,s})\ar[r,swap,"\varphi_{r+1,s}"]          & (F_sV_{r+1},F_{s-1}V_{r+1})
      \end{tikzcd}
    \]
  \item[\Th{3.}] In order to ensure that $\varphi_{r,s}$ is
    well-defined, it would have been enough to choose $\rho$ slightly smaller
    than $\frac{1}{2}\cdot\min_{l\ne l'}d(y_l,y_{l'})$. However, we wanted to
    ensure that $\varphi_{r,s}\colon C_{r,s}\to F_sV_r$ is even an embedding, so
    we need to ensure that $\varphi_{r,s}(\Theta)$ still ‘knows’ which clusters
    come from the same label.
  \end{enumerate}
\end{rem}

\begin{prop}
  \label{prop:varphiiso}
  For each $1\le s\le r$, the map
  $\varphi_{r,s}\colon(C_{r,s}, C^*_{r,s})\to (F_sV_r,F_{s-1}V_r) $
  induces an isomorphism in relative homology.
\end{prop}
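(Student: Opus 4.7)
The plan is to use the tubular neighborhood theorem and the Thom isomorphism to reduce the proposition to showing \textbf{(a)} that $\varphi_{r,s}|_{C^0_{r,s}}\colon C^0_{r,s}\to\frF_sV_r$ is a homotopy equivalence, and \textbf{(b)} that the derivative of $\varphi_{r,s}$ induces a fibrewise linear isomorphism between the normal bundles of $C^0_{r,s}\subset C_{r,s}$ and of $\frF_sV_r\subset F_sV_r$.

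First, I would observe that the dexterity function is lower semi-continuous on $V_r$: alignment (equality of $\zeta$-projections) and entanglement (nonempty intersection of closed $t$-intervals) are closed conditions, so acquiring a new relation---which can only coarsen the equivalence and thereby lower the dexterity---is a closed phenomenon. Hence $\frF_sV_r\subset F_sV_r$ is a closed submanifold of codimension $sp$ by the preceding lemma, and $C^0_{r,s}\subset C_{r,s}$ is closed of codimension $sp$ by construction. Moreover $\varphi_{r,s}^{-1}(\frF_sV_r)=C^0_{r,s}$: if some $\bm\xi_l\ne 0_{e_l}$, the corresponding bouquet necessarily splits into at least two alignment classes, so $\varphi_{r,s}(\Theta)$ has dexterity strictly greater than $r-s$. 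Excising the complements of open tubular neighborhoods on both sides and applying the Thom isomorphism reduces the proposition to (a) and (b).

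For (b), both normal fibres have dimension $sp$, with explicit bases given by \textsc{Remark \ref{rem:normalfrF}} and \textsc{Remark \ref{rem:normalCa}}: on the $C$-side the functionals $\mathrm{d}\xi_{l,b}^\tau$ for $1\le l\le r-s$, $2\le b\le w(e_l)$, $1\le\tau\le p$; on the $V$-side the $\mathrm{d}\zeta_i^\tau$ ranging over the non-primary cluster indices $i$ in each equivalence class. The derivative of $\varphi_{r,s}$ sends the parameter $\xi_{l,b}$ to $\rho(\mathring\Theta)$ times the $\zeta$-shift of the $b$\textsuperscript{th} cluster in the corresponding equivalence class, so it identifies the two bases up to the positive scalar $\rho(\mathring\Theta)>0$, giving the required fibrewise isomorphism.

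For (a), I would exhibit a strong deformation retraction of $\frF_sV_r$ onto $\varphi_{r,s}(C^0_{r,s})$. The combinatorial type of a configuration $[Z]\in\frF_sV_r$---namely the partition of the $r$ clusters into $r-s$ equivalence classes together with the irreducible partition $e_l\in\bE_{w_l}$ assigning the $w_l k$ ordered $t$-values in each class to clusters---is locally constant on $\frF_sV_r$, since any change would require crossing one of the closed conditions above while preserving dexterity. On each such combinatorial component, I would extract the centroid $y_l\in\R^{p+1}$ and the scale $\rho$ as continuous functions of $[Z]$, and then contract each equivalence class toward its standard form $y_l+\rho\cdot T_{e_l}(0)$. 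The main obstacle is to ensure that this homotopy stays inside $\frF_sV_r$, i.e.\ that the chain-connectedness of the clusters under entanglement is preserved throughout; one can arrange this by first shrinking each class homothetically toward its centroid (which preserves all pairwise $t$-interval overlaps proportionally), and then linearly expanding to the standard equispaced cluster layout, whose chain-connectedness follows from the irreducibility of $e_l$. Combined with the fact that $\varphi_{r,s}$ is an embedding, this yields the desired homotopy equivalence.
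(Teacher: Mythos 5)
Your proposal is correct and takes essentially the same approach as the paper: excision plus the Thom isomorphism reduces the statement to (a) the homotopy equivalence $\varphi_{r,s}\colon C^0_{r,s}\to\frF_sV_r$ (\textsc{Lemma~\ref{lem:varphiiso}} in the paper) and (b) the fibrewise normal-bundle isomorphism, which both you and the paper read off from \textsc{Remark~\ref{rem:normalfrF}} and \textsc{Remark~\ref{rem:normalCa}}. The only cosmetic difference is in the deformation retraction for (a): the paper uses a single linear interpolation of the $t$-coordinates toward the equispaced standard configuration around the centroid, whereas you propose a two-step shrink-then-expand homotopy; both stay inside $\frF_sV_r$ for the same reason, namely that within-class entanglement depends only on the relative order of the $t$-values, which either homotopy preserves.
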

The proof of Proposition \ref{prop:varphiiso} relies on the following lemma.
\begin{lem}\label{lem:varphiiso}
  For $1\le s\le r$, the map
  $\varphi_{r,s}\colon C^0_{r,s} \to \frF_sV_r$ is a homotopy equivalence.
\end{lem}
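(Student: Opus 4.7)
The plan is to construct an explicit homotopy inverse $\psi_{r,s}\colon\frF_sV_r\to C^0_{r,s}$ for $\varphi_{r,s}$, verify that $\psi_{r,s}\circ\varphi_{r,s}=\mathrm{id}_{C^0_{r,s}}$ on the nose, and then join $\varphi_{r,s}\circ\psi_{r,s}$ to $\mathrm{id}_{\frF_sV_r}$ by a straight-line homotopy that stays inside $\frF_sV_r$.

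First I would define $\psi_{r,s}$ as follows. Any configuration $[Z]\in\frF_sV_r$ decomposes uniquely into its $r-s$ entanglement classes; each such \emph{super-cluster} $c$ consists of $w_c$ mutually entangled, aligned clusters sharing a common $\zeta$-projection $\zeta_c\in\R^p$. Ordering its $w_c\cdot k$ heights as $t_c^1<\dotsb<t_c^{w_c k}$, the super-cluster determines an unordered partition $e_c\in\bE_{w_c}$ of $\{1,\dotsc,w_c k\}$ by recording which of the $w_c$ clusters each ordered height belongs to; the entanglement hypothesis translates precisely into irreducibility of $e_c$. Setting $y_c\coloneqq(\zeta_c,\bar t_c)\in\R^{p+1}$ with $\bar t_c$ the arithmetic mean of the heights, I would define
\[\psi_{r,s}([Z])\coloneqq\sum_c y_c\otimes(e_c,0_{e_c})\in C^0_{r,s}.\]
The points $y_c$ are pairwise distinct because two super-clusters sharing a $\zeta$-projection necessarily have disjoint height intervals, hence distinct means; and $\psi_{r,s}$ is continuous since the combinatorial type of each super-cluster is locally constant on $\frF_sV_r$ — any change of partition type would require a collision of points. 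The identity $\psi_{r,s}\circ\varphi_{r,s}=\mathrm{id}$ then reduces to the observation that $\sum_{h=1}^{wk}(-1+2h/(wk+1))=0$, so that the mean of the standard heights used to build $\varphi_{r,s}(\Theta)$ recovers $\pr_t(y_l)$, while the partition type recovered is $e_l$ by construction of $T_{e_l}(0)$.

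The remaining task is to connect $\varphi_{r,s}\circ\psi_{r,s}$ to the identity inside $\frF_sV_r$. I plan to define a homotopy $H\colon\frF_sV_r\times[0,1]\to\frF_sV_r$ by fixing each $\zeta_c$ and linearly interpolating every height $t_c^h$ to the standard value $\bar t_c+\rho'\cdot(-1+2h/(w_c k+1))$, where $\rho'\coloneqq\rho(\psi_{r,s}([Z]))$. The hard part will be checking that $H_\lambda([Z])$ really stays in $\frF_sV_r$ for all $\lambda\in[0,1]$, and I expect this to be the main obstacle. I would split it into two verifications. First, within each super-cluster, both endpoint height tuples are strictly increasing in $h$, so their convex combination is too; the partition type is therefore preserved and no intra-super-cluster collision occurs. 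Second, for two super-clusters $c,c'$ sharing a $\zeta$-projection, with say $t_c^{w_c k}<t_{c'}^1$ in $[Z]$, the factor $\tfrac{1}{5}$ built into the definition of $\rho$ forces $\bar t_c+\rho'<\bar t_{c'}-\rho'$; taking the same convex combination of the two strict inequalities $t_c^{w_c k}<t_{c'}^1$ and $\bar t_c+\rho'<\bar t_{c'}-\rho'$ keeps the maximum of $c$'s heights strictly below the minimum of $c'$'s heights throughout the interpolation, so entanglement between super-clusters is avoided. Combined with $\psi_{r,s}\circ\varphi_{r,s}=\mathrm{id}$, this will show that $\varphi_{r,s}$ is a homotopy equivalence with homotopy inverse $\psi_{r,s}$.
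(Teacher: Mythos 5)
Your proof takes essentially the same approach as the paper: the paper also exhibits $\varphi_{r,s}(C^0_{r,s})$ as a deformation retract of $\frF_sV_r$ by linearly interpolating the height parameters of each super-cluster towards their standardised positions (keeping $\zeta$ fixed), and your $\psi_{r,s}$ is exactly the retraction implicit in that construction, with the same mean-height / partition-type data. Your verification that the homotopy stays inside $\frF_sV_r$ is somewhat more careful than the paper's, which appeals to the interpolation "taking place in the convex hull of the points $(\zeta_l,u_l^h)$" — a claim that is not literally accurate when $\rho$ is large relative to the height spread — whereas your argument correctly isolates the two convexity facts that matter (preservation of the internal height order within a super-cluster, and the factor $\tfrac15$ in $\rho$ keeping aligned super-clusters disentangled throughout the interpolation).
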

We first finish the proof of Proposition \ref{prop:varphiiso} assuming Lemma
\ref{lem:varphiiso}, and then prove Lemma \ref{lem:varphiiso}.
\begin{proof}[Proof of Proposition \ref{prop:varphiiso}]
  Since $\frF_sV_r\subseteq F_sV_r$ is an embedded, closed submanifold, the
  relative homology groups of $(F_sV_r,F_{s-1}V_r)$ can be computed using excision
  and the Thom isomorphism for the normal bundle of $\frF_sV_r$ in $F_sV_r$,
  \[H_*(F_sV_r,F_{s-1}V_r)
    =H_*(F_sV_r, F_sV_r\setminus \frF_s V_r)
    \cong H_{*-s\cdot p}(\frF_sV_r;\caO_{r,s}),\]
  for a suitable choice of local coefficients $\caO_{r,s}$ on $\frF_sV_r$.
  Similarly, the relative homology groups of $(C_{r,s}, C^*_{r,s})$ are given by
  \[H_*(C_{r,s}, C^*_{r,s})=H_*(C_{r,s}, C_{r,s}\setminus C^{\smash 0}_{\smash{r,s}})
    \cong H_{*-s\cdot p}(C^0_{r,s};\caO'_{r,s}),\]
  for a suitable choice of local coefficients $\caO'_{r,s}$ on $C^0_{r,s}$.

  Recall Remark \ref{rem:normalfrF} and Remark \ref{rem:normalCa}: the key
  observation is that the insertion map $\varphi_{r,s}$ induces a map of normal
  bundles $N(C^0_{r,s},C_{r,s})\to N(\frF_sV_r,F_sV_r)$, which is an isomorphism
  on fibres. To see this, fix $\mathring{\Theta}\in C^0_{r,s}$ as in Remark
  \ref{rem:normalCa}, denote $[\mathring{Z}]=\varphi_{r,s}(\mathring{\Theta})$,
  and use the notation from Remark \ref{rem:normalfrF}. By construction
  $\phi_{r,s}$ is a smooth map, and our aim is to check that
  \[\phi_{r,s}^*\colon N^*_{[\mathring Z]}(\frF_sV_r,F_sV_r)
    \to N^*_{\mathring{\Theta}}(C^0_{r,s},C_{r,s})\]
  is an isomorphism of vector spaces. This follows directly from the observation
  that $\phi_{r,s}^*$ sends the linear functional $\mathrm{d}\zeta_i^{\smash j}$
  to the linear functional $\smash{\frac{1}{\rho(\mathring{\Theta})}\cdot
    \mathrm{d}\xi_{l,b}^\tau}$, whenever the cluster $[\mathring{z}_i]$ of the
  collection $[\mathring{Z}]$ is obtained from the labelled point
  $\mathring{y}_l\otimes \bm{\xi}_l$, using the $b$\textsuperscript{th} component
  $S_b$ of the partition $e_l$. Hence the local coefficient system $\caO'_{r,s}$
  coincides with $\varphi_{r,s}^*\caO_{r,s}$, and so,
  \[
    \begin{tikzcd}[column sep=4em]
      H_*(C_{r,s}, C^*_{r,s})\ar[r,"\cong","\text{Thom}"'] \ar[d,"(\varphi_{r,s})_*"'] &
      H_{*-s\cdot p}(C^0_{r,s};\caO'_{r,s})\ar[d,"(\varphi_{r,s})_*"]\\
      H_*(F_sV_r, F_{s-1}V_r) \ar[r,"\cong","\text{Thom}"'] & H_{*-s\cdot p}(\frF_sV_r;\caO_{r,s}).
    \end{tikzcd}
  \]
  commutes. By Lemma \ref{lem:varphiiso}, the right vertical map is an
  isomorphism; hence also the left vertical map is an isomorphism.
\end{proof}

\begin{proof}[Proof of Lemma \ref{lem:varphiiso}]
  The insertion map $\varphi_{r,s}\colon C^0_{r,s}\hookrightarrow \frF_sV_r$ is
  a closed injection, and we define a deformation retraction of $\frF_sV_r$ onto $C^0_{r,s}$:
  for each \mbox{$[Z]\in \frF_sV_r$} there is, up to permutation, a unique sequence
  $e_1,\dotsc,e_{r-s}\in \bE$ such that
  \[[Z]=\sum_{l=1}^{r-s}\sum_{S\in e_l}\pa{\zeta_l,(u_{\smash l}^{\smash h})_{h\in S}},\]
  where we assume $u_{l}^{\smash h}<u_{l}^{h+1}$ for all $1\le h< w(e_l)\cdot k$. We define
  \[u_l\coloneqq \tfrac{1}{k\cdot w(e_l)}\cdot (u_{l}^1+\dotsb+u_{l}^{\smash{k\cdot w(e_l)}})\]
  Moreover, let $y_l \coloneqq (\zeta_{l},u_l)\in \R^{p+1}$
  and $\smash{\rho\coloneqq \rho(y_1,\dotsc,y_{r-s})}$.
  The homotopy $H\colon \frF_sV_r\times[0;1]\to \frF_sV_r$ is given by
  linear interpolation of the local parameters:
  we move $u_{l}^h$ to $\smash{u_l+\rho\cdot\big({-1} + \frac{2}{k\cdot w(e_l)+1}\cdot h\big)}$,
  and keep the values $\zeta_l$ fixed. For each $1\le l\le r-s$, the
  interpolation takes place in the convex hull of the points
  $(\zeta_l,u_{l}^h)\in\R^{p+1}$; these are $r-s$ disjoint vertical segments in
  $\R^{p+1}$, and at each time the original vertical order of the points lying on
  each of these segments is preserved. Therefore no collision between distinct
  points of the vertical configuration occurs.

  We note that $H([Z],1)$ is in the subspace $\varphi_{r,s}(C^0_{r,s})$, and by
  construction, $H$ fixes pointwise at all times the subspace
  $\varphi_{r,s}(C^0_{r,s})$.
\end{proof}

\subsection{The stability proof}

\begin{lem}\label{lem:colstab}
  The stabilisation map $C_{r,s}\to C_{r+1,s}$ induces isomorphisms
  \[H_m\pa{C_{r,s},C^*_{r,s}} \to H_m(C_{r+1,s},C_{r+1,s}^*)\]
  in homology for $m\le \tfrac{r}{2}$ and all $1\le s\le r$.
\end{lem}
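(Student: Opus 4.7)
My plan is to exhibit each connected component of $C_{r,s}$ as the total space of a Fadell--Neuwirth fibre bundle over a smaller labelled configuration space, and to invoke classical McDuff--Segal homological stability on fibres.

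First, decompose $C_{r,s}=\bigsqcup_\alpha C_\alpha(\R^{p+1};D)$ over distributions $\alpha\colon \bE\to\N$ with $\deg(\alpha)=(r,s)$. The stabilisation sends each summand $C_\alpha$ into $C_{\alpha+e_0}$, where $e_0\in \bE_1$ is the unique weight-one element, and carries $C^*_\alpha$ into $C^*_{\alpha+e_0}$. Hence it suffices to show, separately for each such $\alpha$, that $(C_\alpha,C^*_\alpha)\to (C_{\alpha+e_0},C^*_{\alpha+e_0})$ induces an isomorphism on $H_m$ in the claimed range.

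Fix $\alpha$ and split it as $\alpha=n\cdot e_0+\alpha'$, where $n\coloneqq\alpha_{e_0}$ and $\alpha'$ is supported on $\bE\setminus\{e_0\}$. Forgetting the $e_0$-coloured points gives a locally trivial fibre bundle $\pi\colon C_\alpha(\R^{p+1};D)\to C_{\alpha'}(\R^{p+1};D)$, whose fibre over a configuration with underlying point set $Q\subset \R^{p+1}$ is the classical unordered configuration space $C_n(\R^{p+1}\setminus Q)$; since the centrality condition $\bm{\xi}=0$ only involves non-$e_0$ labels, we have $C^*_\alpha=\pi^{-1}(C^*_{\alpha'})$, so $\pi$ is a fibration of pairs. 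Under stabilisation the base is unchanged, and the fibre map is the classical stabilisation $C_n(\R^{p+1}\setminus Q)\hookrightarrow C_{n+1}(\R^{p+1}\setminus Q)$ inserting a point on the far right. The resulting Leray--Serre spectral sequence reads
\[
  E^2_{i,j}=H_i\bigl(C_{\alpha'},C^*_{\alpha'};H_j(C_n(\R^{p+1}\setminus Q))\bigr)\Longrightarrow H_{i+j}(C_\alpha,C^*_\alpha),
\]
and by the classical McDuff--Segal theorem (since $\R^{p+1}\setminus Q$ is connected, open, of dimension $p+1\ge 2$), the induced map on the local coefficient system is an isomorphism for $j\le n/2$.

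For the range check, a second Thom iso applied to $C^0_{\alpha'}\hookrightarrow C_{\alpha'}$ (of codimension $sp$) shows that $H_i(C_{\alpha'},C^*_{\alpha'};-)$ vanishes for $i<sp$; combined with $|\alpha'|\le s$ (because $w(e)-1\ge 1$ for every $e\ne e_0$), hence $n=r-s-|\alpha'|\ge r-2s$, the iso range $j\le n/2$ covers all bidegrees $(i,j)$ with $sp\le i$ and $i+j\le r/2$ precisely when $p\ge 1$, and a standard spectral sequence comparison yields the iso on $H_m$ for $m\le r/2$. \textbf{The main obstacle} is precisely this range check at the top of the stability range, which requires iso on $E^2$ not only in total degree $\le r/2$ but also enough control in total degree $r/2+1$ to obtain iso (and not merely injection) on $H_{r/2}$, together with the verification that the McDuff--Segal stabilisation is $\pi_1(C_{\alpha'})$-equivariant and therefore induces a map of local coefficient systems; this equivariance follows from naturality of stabilisation under ambient diffeomorphisms of $\R^{p+1}$ permuting $Q$.
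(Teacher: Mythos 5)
Your proposal takes a genuinely different route from the paper. Where the paper applies the Thom isomorphism to the whole pair $(C_\alpha,C^*_\alpha)$, reducing to twisted homology $H_{m-ps}(C_\alpha(\R^{p+1});\caO_\alpha)$, and then invokes Palmer's twisted homological stability theorem with a polynomial coefficient system $\caP_\lambda$ of degree $|\alpha'|$ on the category $\mathbf{PInj}$, you instead fibre over $C_{\alpha'}$ by forgetting the $e_0$-labelled points, run a relative Leray--Serre spectral sequence, and invoke classical McDuff--Segal stability on the fibres $C_n(\R^{p+1}\setminus Q)$. Both approaches bottom out in the same degree count $r/2 - sp \le \alpha_{e_0}/2$, but yours trades Palmer's black box for a more geometric spectral-sequence comparison.

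There is one concrete gap you did not flag. The target $C_{r+1,s}$ decomposes as a union of $C_\beta$ over distributions $\beta$ of degree $(r+1,s)$, and not all of these are of the form $\alpha+e_0$: the components with $\beta_{e_0}=0$ receive \emph{no} map from $C_{r,s}$. You must also show $H_m(C_\beta,C^*_\beta)=0$ for such $\beta$ in the range $m\le r/2$; the paper does this by observing that for such $\beta$, $|\beta|=r+1-s\le s$, hence $sp\ge s\ge (r+1)/2>m$, so $M_{m,\beta}=H_{m-sp}(\cdot)=0$. Your Thom isomorphism on the base gives the same vanishing, but the sentence ``hence it suffices'' in your proposal skips over it.

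The edge-of-range issue you do flag is real and is the more delicate point. Your $E^2$-map is an isomorphism for $j\le n/2$, and all nonzero entries have $i\ge sp$, which together force iso in total degree $\le r/2$; but the Zeeman comparison also needs surjectivity in total degree $r/2+1$, where (when $p=1$, $i=sp$, $|\alpha'|$ near $s$) one can have $j=n/2+1$, outside the McDuff--Segal iso range. To close this, one would have to either exploit the split injectivity of the fibre stabilisation to control the relevant incoming differentials directly, or mimic the paper more closely by Thom-isoing the total pair first and running the Leray--Serre spectral sequence for the restricted bundle $C^0_\alpha\to C^0_{\alpha'}$ with the twist $\caO_\alpha$ already built in, at which point the range matches on the nose. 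The $\pi_1(C_{\alpha'})$-equivariance concern is the least serious: it is handled by the usual device of stabilising by inserting the new point in a collar at infinity, outside the support of all monodromy diffeomorphisms.
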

\begin{proof} 
  For each distribution $\alpha$ of degree $(r,s)$, we have a (not always orientable)
  disc bundle $C_\alpha(\R^{p+1};D)\to C_\alpha(\R^{p+1})$ of dimension $p\cdot s$ 
  and structure group $\frS(\alpha)$, which gives us a Thom isomorphism
  \[H_{m}\pa{C_\alpha(\R^{p+1};D),C^*_\alpha(\R^{p+1};D)}
    \cong H_{m-p\cdot s}\pa{C_{\alpha}(\R^{p+1});\hspace*{1px}\pr_\alpha^*\caO_\alpha}
    =:M_{m,\alpha},\]
  where $\pr_\alpha\colon \pi_1(C_{\alpha}(\R^{p+1}))\to \frS(\alpha)$ is the projection
  and $\caO_\alpha$ is of the form
  \[\caO_\alpha\colon \frS(\alpha)\to \{\pm 1\},
    \quad(\sigma_e)_{e\in \bE}\mapsto \prod_e \sg(\sigma_e)^{p\cdot (w(e)-1)}.\]
  In particular, we have a natural isomorphism
  $\pr^*_\alpha\caO_{\alpha}\cong \stab^*\pr^*_{\alpha+e_0}\caO_{\alpha+e_0}$
  for the canonical stabilisation $C_\alpha(\R^{p+1}) \to C_{\alpha+e_0}(\R^{p+1})$,
  which gives us induced stabilisation morphisms $M_{m,\alpha}\to M_{m,\alpha+e_0}$.
  Now we have a commutative square
  \[
    \begin{tikzcd}
      H_m(C_{r,s},C^*_{r,s})\ar[d,swap]\ar[r,"\cong"] &
      \displaystyle\bigoplus_{\deg(\alpha)=(r,s)}M_{m,\alpha}\ar[d]\\
      H_m(C_{r+1,s},C^*_{r+1,s})\ar[r,swap,"\cong"] &
      \displaystyle\bigoplus_{\deg(\alpha)=(r,s)}M_{m,\alpha+e_0}\oplus
      \bigoplus_{\substack{\deg(\alpha)=(r+1,s)\\\alpha_{e_0}=0}} M_{m,\alpha}
    \end{tikzcd}
  \]
  where the left vertical arrow is the desired stabilising map and the right
  side is a sum of maps $M_{m,\alpha}\to M_{m,\alpha+e_0}$. Therefore, we can
  prove the statement by showing that for $m\le \frac{r}{2}$, we have
  $M_{m,\alpha}=0$ for each distribution $\alpha$ of degree $(r+1,s)$ with
  $\alpha_{e_0}=0$, and secondly, that the stabilising map $M_{m,\alpha}\to
  M_{m,\alpha+e_0}$ is an isomorphism for each distribution $\alpha$ of degree
  $(r,s)$. For the first part, we use $w(e)\ge 2$ for all $e\in \bE$ with
  $\alpha_e\neq 0$ to obtain
  \[p\cdot s\ge p\cdot \sum_e \alpha_e\cdot \pa{w(e)-\tfrac 12\cdot  w(e)}
    =p\cdot \tfrac{r+1}2\ge \tfrac{r+1}2,\]
  so $m-p\cdot s<0$, whence $M_{m,\alpha}=0$. For the second part, have to check
  that $H_{m-p\cdot s}(C_\alpha(\R^{p+1});\pr_{\alpha}^*\caO_\alpha)\to
  H_{m-p\cdot s}(C_{\alpha+e_0}(\R^{p+1});\pr_{\alpha+e_0}^*\caO_{\alpha+e_0})$
  is an isomorphism for $m\le \frac{r}{2}$. To do so, we first observe that
  $\tfrac{r}{2} \ge \tfrac{1}{2}\cdot\alpha_{e_0} + \sum_{e\ne e_0}\alpha_e$ since
  $w(e)\ge 2$ for $e\ne e_0$; and as $p\ge 1$, we obtain
  \begin{align*}
    m-p\cdot s &\le \tfrac{r}{2}-p\cdot \sum_e \alpha_e\cdot (w(e)-1)
                 \le -\tfrac{r}{2} + \sum_{e}\alpha_e
                 \le \tfrac 12\cdot \alpha_{e_0}.
  \end{align*}
  Now we want to use a technique from \cite{palmertwist}, so we adapt his
  notation by writing $\lambda\coloneqq (\alpha_e)_{e\ne e_0}$, so
  $\abs{\lambda} = \sum_{e\ne e_0}\alpha_e$ as well as $\lambda[n]$ for the
  distribution with $\lambda[n](e_0)=n-\abs{\lambda}$ and
  $\lambda[n](e)=\alpha_e$ for $e\ne e_0$, so we have $\lambda[r-s]=\alpha$ and
  $\lambda[r-s+1] = \alpha+e_0$. This notation has the advantage that
  $n=\sum_e\lambda[n](e)$. We have a stabilisation map
  $C_{\lambda[n]}(\R^{p+1})\to C_{\lambda[n+1]}(\R^{p+1})$ by placing an
  additional point with label $e_0$, which for $n=r-s$ is our map from before.

  Now we construct a signed version of \cite[Ex.\,4.6]{palmertwist}: let
  $\mathbf{PInj}$ be the category whose objects are non-negative integers and
  whose morphisms $n\to n'$ are partially defined injections $\eta\colon
  \{1,\dotsc,n\}\dashrightarrow \{1,\dotsc, n'\}$. We define a functor
  $\caP_\lambda\colon \mathbf{PInj}\to \mathbf{Ab}$ to the category of abelian
  groups as follows: we set
  \[\caP_\lambda(n) \coloneqq \Z\llangle{
      (P_e)_{e\ne e_0};\,\text{
        $P_e\subseteq \set{1,\dotsc,n}$,
        $P_e\cap P_{e'}=\emptyset$, and
        $\# P_e=\lambda_e$%
      }
    },\]
  and for each partially-defined injection $\eta\colon n\to n'$
  and $P\coloneqq (P_e)_{e\ne e_0}$, we define
  \[\eta_*(P) \coloneqq
    \begin{cases}
      \prod_{e\ne e_0}\sg(\eta|_{P_e})^{p\cdot (w(e)-1)}\cdot (\eta(P_e))_{e\ne e_0}
      & \text{if $\eta$ is defined on $\bigcup_e P_e$},\\
      0 & \text{else,}
    \end{cases}\]
  where in the first case, the restriction $\eta|_{P_e}\colon P_e \to \eta(P_e)$
  can canonically be identified with a permutation in $\frS_{\alpha_e}$ since $P_e$
  and $\eta(P_e)$ are totally or\-dered as subsets of $\{1<\dotsb<n\}$ resp.\
  $\{1<\dotsb<n'\}$. By the same in\-ductive argument as in
  \cite[Lem.\,4.7]{palmertwist}, $\caP_\lambda$ is a polynomial coefficient system
  with \mbox{$\deg(\caP_\lambda)=\abs{\lambda}=r-s-\alpha_{e_0}$} and since
  $\Z[\frS_n]\otimes_{\frS(\lambda[n])} \caO_{\lambda[n]}\cong \caP_\lambda(n)$ as
  $\frS_n$-representations, we have natural isomorphisms
  \[
    \begin{tikzcd}
      H_{m'}\pa{C_{\lambda[n]}(\R^{p+1});\hspace*{1px}\pr^*_{\lambda[n]}\caO_{\lambda[n]}}\ar[d,swap,"\cong"]\ar[r]& H_{m'}\pa{C_{\lambda[n+1]}(\R^{p+1});\hspace*{1px}\pr^*_{\lambda[n+1]}\caO_{\lambda[n+1]}}\ar[d,"\cong"]\\
      H_{m'}\pa{C_n(\R^{p+1});\hspace*{1px}\pr^*_n\caP_\lambda(n)}\ar[r] & H_{m'}\pa{C_{n+1}(\R^{p+1});\hspace*{1px}\pr^*_{n+1}\caP_\lambda(n+1)},
    \end{tikzcd}
  \]
  where $\pr_n\colon \pi_1\pa{C_n(\R^{p+1})}\to \frS_n$ is the projection. Since
  $p+1\ge 2$, the bottom map is an isomorphism for $m'\le \frac{1}{2}\cdot
  (n-r+s+\alpha_{e_0})$ by \cite[Thm.\,A]{palmertwist}. For us, $n=r-s$, we get
  the isomorphism for $m'\le \frac{1}{2}\cdot \alpha_{e_0}$ as desired.
\end{proof}

Now we have all tools to prove Theorem \ref{theo:stab}.

\begin{proof}[Proof of Theorem \ref{theo:stab}]
  This is now a standard argument: let $E(r)$ denote the Leray homology spectral
  sequence associated with the filtered space $V_r(\R^{p,1})$; the fil\-tration-preserving
  stabilisation $V_r(\R^{p,1})\to V_{r+1}(\R^{p,1})$ induces a morphism $f\colon
  E(r)\to E(r+1)$ of spectral sequences, and on the first page we have exactly the
  morphisms
  \[f^1_{s,t}\colon E(r)^1_{s,t}
    =H_{s+t}(F_sV_r,F_{s-1}V_r)\to E(r+1)^1_{s,t}
    =H_{s+t}(F_sV_{r+1},F_{s-1}V_{r+1}).\]
  By Proposition \ref{prop:varphiiso} and Lemma \ref{lem:colstab}, we know that
  $f^1_{s,t}$ is an isomorphism for $s+t\le \tfrac{r}{2}$, so by a standard
  comparison argument between spectral sequences \cite{Zeeman} we obtain
  that $H_m(V_r)\to H_m(V_{r+1})$ is an isomorphism for $m\le \frac r2$.
\end{proof}

\begin{outl}
  \begin{enumerate}
  \item [\Th{1.}]As already remarked, the space
    $V^k(\R^{p,q})=\coprod_{r\ge0}V_r^k(\R^{p,q})$ is a $\scC_{p+q}$-algebra with
    $\N$ as monoid of path components, hence the stable homology
    $H_*(V^k_{\infty}(\R^{p,q})):=\mathrm{colim}_{r\to \infty}H_*(V_r^k(\R^{p,q}))$
    agrees with the homology of (a component of) some $\Omega^{p+q}$-space. The
    second author \cite{kranhold2} provides a geometric model for the $p$-fold
    delooping of $V^k(\R^{p,q})$, and in the case $q=1$ even for the $(p+1)$-fold
    delooping. We still lack a geometric description of the $(p+q)$-fold delooping
    of $V^k(\R^{p,q})$ for $q\ge2$, even in the seemingly innocent case $p=0$.
  \item [\Th{2.}] We believe that the Leray spectral sequence for
    the filtration $F_\bullet V_r$ collapses on its first page and that the
    extension problem is trivial. This would then imply that, using the notation
    from the proof of Lemma \ref{lem:colstab},
    \[H_m(V_r(\R^{p,1}))\cong\bigoplus_{s=0}^{r-1} H_m(C_{r,s},C^*_{r,s})
      \cong \bigoplus_{s=0}^{r-1}\bigoplus_{\deg(\alpha)=(r,s)}M_{m,\alpha}.\]
    Our motivation is again the description of the stable homology
    \[H_m(V_{\infty}(\R^{p,1}))\cong \bigoplus_{s=0}^{\infty}\bigoplus_{\alpha}M_{m,\alpha},\]
    given in \cite{kranhold2}, where the last direct sum is extended over all
    distributions $\alpha\colon \bE\to\N$ with $\alpha_{e_0}=0$ and $s(\alpha)=s$. 
  \item [\Th{3.}] The strategy of proof of Theorem \ref{theo:stab}
    generalises to the following case: Let $K=(k_1,\dots,k_r)$, and for $k\ge1$
    let $r(k)\ge0$ be the number of indices $1\le i\le r$ with $k_i=k$; define a
    stabilisation map $V_K(\R^{p,1})\to V_{(K,k)}(\R^{p,1})$ by inserting a
    new vertical cluster; then the induced map in homology
    \[\stab_*\colon H_m(V_K(\R^{p,1}))\to H_m(V_{(K,k)}(\R^{p,1}))\]
    is an isomorphism for $m\le \frac{r(k)}{2}$. We leave to the reader the
    details of the generalisation of the proof.
  \end{enumerate}
\end{outl}

\printbibliography[heading=bibintoc]

\begin{addr}
  \auth
  {6cm}
  {Andrea Bianchi}
  {Department of Mathematical Sciences,\\
    University of Copenhagen,\\
    Universitetsparken 5,\\
    DK-2100 Copenhagen, Denmark}
  {anbi@math.ku.dk}
  \hspace*{8mm}
  \auth
  {5.2cm}
  {Florian Kranhold}
  {Mathematical Institute,\\
    University of Bonn,\\
    Endenicher Allee 60,\\
    53115 Bonn, Germany}
  {kranhold@math.uni-bonn.de}
\end{addr}

\end{document}